\title[Symplectic birational transformations on O'Grady's sixfolds]{
Symplectic birational transformations of finite order on O'Grady's sixfolds
}
\author{Annalisa Grossi}
\address[Annalisa Grossi]{Lehrstuhl für Algebra und Zahlentheorie, Institut für Mathematik, Universität Augsburg, Universitätsstr. 14, 86135 Augsburg, Germany}
\curraddr{Fakultät für Mathematik, Technische Universität Chemnitz, Reichenhainer Str. 39, 09126 Chemnitz, Germany}
\email{annalisa.grossi@math.tu-chemnitz.de}
\author{Claudio Onorati}
\address[Claudio Onorati]{Matematisk institutt, Universitetet i Oslo, postboks 1053, Blindern, 0316 Oslo, Norway}
\curraddr{Dipartimento di Matematica, Universit\`a degli studi di Roma Tor Vergata, via della Ricerca Scientifica 1, 00133 Roma, Italy}
\email{onorati@mat.uniroma2.it}
\author{Davide Cesare Veniani}
\address[Davide Cesare Veniani]{Institut für Diskrete Strukturen und Symbolisches Rechnen, Universität Stuttgart, Pfaffenwaldring 57, 70569 Stuttgart, Germany}
\email{davide.veniani@mathematik.uni-stuttgart.de}
\newcommand{\annalisa}[1]{{\stepcounter{notecount} {Criptsize \color{purple}  $^{[\arabic{notecount}]}$#1 \newline}}}
\newcommand{\claudio}[1]{{\stepcounter{notecount} {Criptsize \color{teal} $^{[\arabic{notecount}]}$#1 \newline}}}
\date{\today}
\subjclass[2020]{%
14J42 
(%
14J50 
14E07
)}
\keywords{Irreducible holomorphic symplectic manifolds, symplectic birational transformations, symplectic automorphisms}
\thanks{The second author gratefully acknowledges the Research Council of Norway project 250104 for financial support.}
\begin{document}

\maketitle

\begin{abstract}
We prove that any symplectic automorphism of finite order on a manifold of type \(\OG6\) acts trivially on the Beauville--Bogomolov--Fujiki lattice and that any birational transformation of finite order acts trivially on its discriminant group. Moreover, we classify all possible invariant and coinvariant sublattices.
\end{abstract}


\section{Introduction}

By a result of Rapagnetta~\cite{Rapagnetta:topological.invariants.OG6}, the Beauville--Bogomolov--Fujiki lattice \(\HH^2(X,\IZ)\) of an irreducible holomorphic symplectic manifold \(X\) belonging to the sporadic deformation type of dimension \(6\) constructed by O'Grady~\cite{OGrady:new.six.dimensional} is isomorphic to
\[
    \bL \coloneqq 3\bU \oplus 2[-2].
\]

In this paper we answer the question of which isometries on \(\bL\) can be induced by a symplectic birational transformation of finite order on a manifold of type~\(\OG6\).

Our first theorem is the classification of all isometries on \(\bL\) induced by symplectic \emph{automorphisms} of finite order. Quite surprisingly, this turns out to be trivial. 

\begin{theorem}[\autoref{proof:thm:aut.trivial.action}] \label{thm:aut.trivial.action}
Any symplectic automorphism of finite order on a manifold of type~\(\OG6\) acts trivially on the Beauville--Bogomolov--Fujiki lattice~\(\bL\).
\end{theorem}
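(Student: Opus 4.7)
The plan is to combine lattice-theoretic arguments with the fixed-point theory of symplectic automorphisms, reducing to prime order and then eliminating each surviving prime.

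First, I would reduce to the case where $f$ has prime order $p$, since the statement for general finite order follows from the prime-order case applied to suitable powers of $f$. Set $G = \langle f^* \rangle \subset \OO(\bL)$ and write $T_G = \bL^G$ and $S_G = T_G^\perp$ for the invariant and coinvariant sublattices. Because $f$ is symplectic and of finite order, $T_G \otimes \IR$ contains the real and imaginary parts of the symplectic form together with a $G$-averaged Kähler class, so it has a positive definite subspace of dimension $3$. Since $\bL$ has signature $(3,5)$, this forces $S_G$ to be negative definite of rank $r \leq 5$. Moreover $S_G$ contains no nonzero $G$-fixed vector, so the rational representation $S_G \otimes \IQ$ is a sum of copies of the faithful irreducible $\IQ[G]$-module of dimension $p-1$; hence $(p-1) \mid r \leq 5$, which leaves only $p \in \{2, 3, 5\}$.

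For each such $p$ I would then use Nikulin's theory of discriminant forms and primitive embeddings to enumerate the possible pairs $(S_G, f^*|_{S_G})$: negative definite lattices of the appropriate rank, admitting a fixed-point-free isometry of order $p$, and embedding primitively into $\bL$ with orthogonal complement of signature $(3, 5-r)$. Since $\bL$ has rank only $8$ and discriminant group $(\IZ/2\IZ)^2$, this enumeration should be very short, producing only a small list of candidate coinvariant lattices per prime.

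Finally, each surviving candidate must be excluded. The natural tools are the holomorphic Lefschetz fixed point formula applied to $f$ and its powers, which severely constrains the Euler characteristic and topology of the fixed locus in terms of the Hodge numbers of $\OG6$, together with the known description of the monodromy group of $\OG6$, inside which $f^*$ must lie. I expect $p = 3$ and $p = 5$ to fall quickly, because the corresponding coinvariant ranks are fixed and relatively large. The hardest case, and the real bottleneck, is $p = 2$: the coinvariant lattice can have several possible ranks and isometry classes, and its $2$-elementary nature gives little grip through discriminant-form arguments. Here a careful case-by-case analysis will be needed, possibly combined with the realization of manifolds of type $\OG6$ as symplectic resolutions of singular moduli spaces of sheaves on abelian surfaces, in order to transfer the question to one about the underlying surface.
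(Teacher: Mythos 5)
Your reduction to prime order, the signature argument forcing $\bL_g$ to be negative definite of rank at most $5$, the divisibility $(p-1)\mid\rank(\bL_g)$ giving $p\in\{2,3,5\}$, and the Nikulin-style enumeration of candidate coinvariant lattices all match the paper's argument (the enumeration is carried out in \autoref{prop:order.2}, \autoref{prop:order.3} and \autoref{prop:order.5}, yielding $[-2]$, $2[-2]$, $3[-2]$, $\bD_4$ for $p=2$, $\bA_2$, $2\bA_2$ for $p=3$, and $\bA_4$ for $p=5$). The gap is in your final step. The holomorphic Lefschetz formula and a transfer to moduli spaces of sheaves on abelian surfaces are neither carried out nor plausibly sufficient: you give no numerical constraint that would exclude, say, a coinvariant lattice $\bD_4$, the case $p=2$ that you yourself flag as the bottleneck is left entirely open, and the moduli-space realization only covers a proper locus of the deformation class, so it cannot exclude automorphisms on an arbitrary manifold of type $\OG6$.

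The idea you are missing is that the exclusion is uniform, immediate, and purely lattice-theoretic, via the Torelli theorem combined with Mongardi and Rapagnetta's description of the K\"ahler cone of manifolds of type $\OG6$. By \autoref{thm:automorphisms}, a finite group $G$ of symplectic automorphisms must satisfy $\bL_G\cap\cW_{\OG6}=\emptyset$, and the wall set $\cW_{\OG6}$ contains \emph{every} class $w$ with $w^2=-2$: since $(w,\bL)\mid w^2$, the divisibility is $1$ or $2$, and both occur in the definition of $\cW_{\OG6}$. Each of the seven candidate coinvariant lattices above visibly contains a vector of square $-2$, so every candidate is excluded at one stroke. This is precisely where $\OG6$ differs from K3 surfaces and manifolds of type $\K3^{[n]}$: there the coinvariant lattice of a symplectic involution (e.g.\ $\bE_8(2)$) has minimal norm $-4$ and can avoid the relevant wall classes, whereas here the rank bound and the elementary-divisor constraints coming from $\bL^\sharp\cong(\IZ/2\IZ)^2$ force a $(-2)$-vector into every candidate. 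Without this input your plan stalls exactly at the step you identify as hardest.
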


This phenomenon has never been observed before in other deformation types of irreducible holomorphic symplectic manifolds. 
In fact, there exist K3 surfaces, manifolds of type \(\K3^{[n]}\) and of type \(\Kum^{[n]}\) which admit symplectic automorphisms of finite order with nontrivial action (see for instance \cite{Mongardi:towards.classification.symp.K3^n, Mongardi.Tari.Wandel:kummer.fourfolds,Mukai:finite.groups.Mathieu.K3}).

\autoref{thm:aut.trivial.action} can be rephrased by saying that any symplectic automorphism of finite order is contained in the kernel of the representation map \(\eta_* \colon \Bir(X) \rightarrow \OO(\bL)\), where \(\eta \colon \HH^2(X,\IZ) \rightarrow \bL\) is a marking (see \autoref{subsec:OG6}).
By \cite[Theorem~2.1]{Hassett.Tschinkel:Hodge.theory.lagr.planes.Kum^2}, the kernel is a deformation invariant, and in the case of manifolds of type \(\OG6\) it is isomorphic to \((\IZ/2\IZ)^8\) (see \cite{Mongardi.Wandel:OG.trivial.action}).

Our second theorem considers the induced action on the discriminant group \(\bL^\sharp \coloneqq \bL^\vee/\bL\).

\begin{theorem}[\autoref{proof:thm:bir.trivial.action}] \label{thm:bir.trivial.action}
Any symplectic birational transformation of finite order on a manifold of type~\(\OG6\) acts trivially on the discriminant group \(\bL^\sharp\) of the lattice \(\bL\).
\end{theorem}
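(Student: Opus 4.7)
The plan is to reduce \autoref{thm:bir.trivial.action} to \autoref{thm:aut.trivial.action} by factoring any symplectic birational transformation of finite order, at the level of cohomology, through a regular automorphism of a birational model, up to reflections in wall classes of the movable cone. The extra flexibility afforded by birational versus regular transformations is precisely what allows \(f^*\) to act nontrivially on \(\bL\); the point is that it does so only by reflections that are invisible on \(\bL^\sharp\).

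Let \(f\) be a symplectic birational transformation of \(X\) of finite order. Using a Torelli-type theorem for \(\OG6\) together with a description of the walls of the birational Kähler cone, I would produce an irreducible holomorphic symplectic manifold \(Y\) of type~\(\OG6\), a birational map \(\psi \colon Y \dashrightarrow X\) with induced Hodge isometry \(\psi_* \colon \HH^2(Y, \IZ) \to \HH^2(X, \IZ)\), and a symplectic automorphism \(g \in \Aut(Y)\) of the same finite order as \(f\), such that
\[
    f^* = \psi_* \circ g^* \circ \psi_*^{-1} \circ R \quad \text{in } \OO(\bL),
\]
where \(R\) is a product of reflections \(s_v\) in primitive classes \(v \in \HH^2(X, \IZ)\) of square \(-2\) spanning walls of the movable cone of \(X\). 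Geometrically, this amounts to factoring \(f\) as a finite sequence of elementary wall-crossings followed by a regular automorphism. Applying \autoref{thm:aut.trivial.action} to \(g\) gives that \(g^*\) acts trivially on \(\HH^2(Y, \IZ)\); conjugating by \(\psi_*\) yields \(f^* = R\) in \(\OO(\bL)\).

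It then suffices to check that any reflection \(s_v\) in a primitive class \(v \in \bL\) with \(v^2 = -2\) acts trivially on the discriminant group. Indeed, for \(w \in \bL^\vee\) one has \(s_v(w) = w + (w, v)\, v\); the coefficient \((w, v)\) is an integer because \(w \in \bL^\vee\) and \(v \in \bL\), so \((w, v)\, v \in \bL\) and \(s_v(w) \equiv w \pmod{\bL}\). Hence \(R\), and therefore \(f^*\), acts trivially on \(\bL^\sharp\).

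The main obstacle is the first step: producing the decomposition \((Y, g, \psi, R)\) requires a precise description of the walls of the birational Kähler cone for manifolds of type~\(\OG6\), along the lines of Markman's wall-crossing theory for \(\K3^{[n]}\)-type manifolds. An additional point to verify is that wall reflections beyond \((-2)\)-classes do not intervene, or that any such reflections also act trivially on \(\bL^\sharp\); otherwise the argument would need to be supplemented with a case analysis of the admissible wall types.
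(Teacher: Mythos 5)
Your overall strategy -- factor \(f^*\) into wall reflections times the pullback of a regular automorphism, kill the automorphism part by \autoref{thm:aut.trivial.action}, and check that the reflections are invisible on \(\bL^\sharp\) -- is genuinely different from the paper's argument and is not unreasonable, but the first step, on which everything rests, is exactly where the gap is, and it is not a technicality. The naive way to produce your \((Y,g,\psi)\) is to average a Kähler-type class over \(\langle f^*\rangle\); if the resulting invariant class lay in the interior of a Kähler-type chamber of the movable cone, \(f\) would become regular on the corresponding model and \autoref{thm:aut.trivial.action} would give \(f^*=\id\) outright -- contradicting \autoref{tab:sp.OG6}, which exhibits nontrivial actions. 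So in all the interesting cases the invariant class lies \emph{on} a wall and no such model exists; the decomposition \(f^*=\psi_*\circ g^*\circ\psi_*^{-1}\circ R\) with \(g\) regular \emph{of the same finite order} cannot be obtained this way. What you actually need is the \(\OG6\) analogue of Markman's semidirect-product decomposition of the Hodge monodromy group preserving the fundamental exceptional chamber into the reflection group generated by the interior walls and the stabilizer of the Kähler cone. That requires proving (a) that the relevant reflections are monodromy operators preserving the fundamental exceptional chamber and act transitively on its Kähler-type chambers, and (b) that the automorphism factor has finite order -- not automatic, since \(Rf^*\) need not have finite order a priori; one has to pass to the quotient by the normal reflection subgroup and use finiteness of \(\ker(\Aut(X)\to\OO(\bL))\) before \autoref{thm:aut.trivial.action} can be invoked. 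None of this is set up in your proposal, and it is the entire content of the proof.

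A second, concrete danger is your phrase ``walls of the movable cone'': those walls are spanned by the prime exceptional classes \(\cW^\pex_{\OG6}\), which include \((-4)\)-classes \(v\) of divisibility \(2\), and the reflection in such a class sends \(w\mapsto w+(w\cdot v)(v/2)\), which acts \emph{nontrivially} on \(\bL^\sharp\). So the case analysis you defer at the end is genuinely needed: the argument survives only because a birational transformation preserves the fundamental exceptional chamber and hence never crosses those walls, so only the \((-2)\)-classes of divisibility \(1\) cutting the \emph{interior} of the movable cone can occur in \(R\) -- and for those your computation that \(s_v\) is trivial on \(\bL^\sharp\) is correct. For comparison, the paper's proof avoids all of this geometry: since \(\OO(\bL^\sharp)\) has order \(2\) (\autoref{lem:|O(L^sharp)|=2}), one reduces to \(|g|\) a power of \(2\), and the classification of coinvariant lattices in Propositions \ref{prop:order.2}, \ref{prop:order.4}, \ref{prop:order.8} and \ref{prop:order.16} forces the determinant condition \eqref{eq:cond.det}, which by \autoref{lem:cond.det} yields \(g^\sharp=\id\).
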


We remark that nonsymplectic automorphisms of manifolds of type \(\OG6\) can indeed have nontrivial action on both \(\bL\) and \(\bL^\sharp\), as shown by the first author~\cite{Grossi:non-sp.aut.OG6}.

Finally, our third theorem classifies all possible isometries on \(\bL\) induced by symplectic birational transformations of finite order on manifolds of type \(\OG6\). We denote the order of an isometry \(g \in \OO(\bL)\) by~\(|g|\) and we use the notation \(\bL^g\) and \(\bL_g\) for the invariant and coinvariant sublattices, respectively.

\begin{theorem}[\autoref{proof:thm:main.sp.OG6}] \label{thm:main.sp.OG6} 
An isometry \(g \in \OO(\bL)\) of finite order is induced by a symplectic birational transformation on a manifold of type \(\OG6\) if and only if
\begin{equation} \label{eq:possible.orders}
    |g| \in \set{1, 2, 3, 4, 5, 6, 8, 10, 12}
\end{equation}
and the pair \((\bL^g, \bL_g)\) appears in \autoref{tab:sp.OG6}. 
In this case, the following equality holds:
\begin{equation} \label{eq:cond.det}
    |{\det(\bL^g)}| = |{\det(\bL)} \cdot {\det(\bL_g)}|.
\end{equation}
\end{theorem}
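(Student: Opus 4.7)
Our plan is to prove both implications by combining a Torelli-type theorem for manifolds of type~$\OG6$ with a purely lattice-theoretic enumeration. The trivial action on~$\bL^\sharp$ guaranteed by \autoref{thm:bir.trivial.action} is the decisive ingredient that makes both the lattice classification and the geometric lifting manageable.

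For the necessity direction, let $f$ be a symplectic birational transformation of finite order and set $g = \eta_*(f)$. Since $f^*\sigma = \sigma$, the invariant lattice $\bL^g$ contains the positive $2$-plane spanned by $\Re\sigma$ and $\Im\sigma$; averaging a Kähler class of $X$ over the finite cyclic group $\langle f\rangle$ --- using that $f$ acts on the birational Kähler cone --- produces a third positive direction in $\bL^g\otimes\IR$, so $\bL^g$ has signature $(3,r)$ and $\bL_g$ is negative definite. Combined with \autoref{thm:bir.trivial.action}, these constraints restrict $|g|$ to the list~\eqref{eq:possible.orders} via standard bounds on the orders of finite isometries preserving a discriminant form. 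A case-by-case enumeration, guided by Nikulin's gluing theory and the classification of negative-definite lattices of small rank and bounded discriminant, then produces the candidate pairs in \autoref{tab:sp.OG6}.

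For the sufficiency direction, given a pair $(\bL^g,\bL_g)$ in the table together with a lattice isometry $g$ realizing it, we use surjectivity of the period map for $\OG6$-manifolds to produce a marked pair $(X,\eta)$ whose period lies in the $g$-invariant part of the period domain and avoids the wall divisors responsible for non-Hausdorff phenomena. Since $g$ acts trivially on $\bL^\sharp$, a Torelli-type theorem for birational transformations --- made effective by the computation of the kernel of $\eta_*$ as $(\IZ/2\IZ)^8$ in \cite{Mongardi.Wandel:OG.trivial.action} --- produces a birational $f$ with $\eta_*(f)=g$, which is automatically symplectic since $g$ preserves~$\sigma$. The determinant identity~\eqref{eq:cond.det} is then immediate from trivial discriminant action: in Nikulin's gluing formula $|\det(\bL^g)|\cdot|\det(\bL_g)| = |\det(\bL)|\cdot[\bL:\bL^g\oplus\bL_g]^2$, trivial $g$-action on~$\bL^\sharp$ forces the gluing subgroup to project isomorphically onto~$\bL_g^\sharp$, giving $[\bL:\bL^g\oplus\bL_g]=|\det(\bL_g)|$.

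The hardest step will be the lattice enumeration underlying \autoref{tab:sp.OG6}: even with signature, order and discriminant constraints in hand, each candidate pair must pass the genus test of actually admitting a primitive embedding into~$\bL$ with the prescribed orthogonal complement, which typically requires either a careful case analysis in Nikulin's formalism or computer-aided verification.
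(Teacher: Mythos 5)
Your overall architecture (Torelli theorem plus a lattice enumeration) matches the paper's, but there is a genuine gap at the heart of both directions: you never use the wall‑divisor condition \(\bL_g \cap \cW^\pex_{\OG6} = \emptyset\) from \autoref{thm:bir.transformations}, and you replace it with a claim that is false. Concretely, you assert that trivial action on \(\bL^\sharp\) forces the gluing subgroup of \(\bL_g \hookrightarrow \bL\) to be all of \(\bL_g^\sharp\), hence \eqref{eq:cond.det}. The reflection \(\rho_e\) in a generator \(e\) of one of the \([-2]\) summands of \(\bL = 3\bU \oplus 2[-2]\) is a counterexample: it acts trivially on \(\bL^\sharp\) (since \(e/2 \mapsto -e/2 = e/2\) there), yet \(\bL_{\rho_e} = [-2]\) has trivial gluing subgroup and \(|{\det(\bL^{\rho_e})}| = 2 \neq 8\). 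The correct logical direction is the paper's \autoref{lem:cond.det}: \eqref{eq:cond.det} implies trivial discriminant action, not conversely. In the paper, \eqref{eq:cond.det} is instead deduced from \(\bL_g \cap \cW^\pex_{\OG6} = \emptyset\) via \autoref{lem:main.divisibility.lemma}, after checking case by case that the candidate coinvariant lattices represent the relevant discriminant classes by halves of vectors of square \(-2\) or \(-4\) of divisibility \(2\). Without this step your enumeration is also incomplete on the necessity side: negative definiteness, \(m\)-elementarity and trivial discriminant action alone do not exclude, e.g., \(4[-2]\), \(\bD_4 \oplus [-2]\) or \(5[-2]\) in order \(2\) (all are killed precisely because the divisibility condition forces \eqref{eq:cond.det}, which they cannot satisfy).

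The same omission undermines your sufficiency argument. You propose to "avoid the wall divisors" by choosing the period generically, but the coinvariant lattice \(\bL_g\) is always contained in \(\bL^{1,1}\) for any \(g\)-invariant period, so if \(\bL_g\) contained a vector of \(\cW^\pex_{\OG6}\) no choice of period could avoid it (the example \(\rho_e\) above is exactly such a case, and it is not induced by any symplectic birational transformation). What must be verified for each entry of \autoref{tab:sp.OG6} is the lattice-theoretic statement \(\bL_g \cap \cW^\pex_{\OG6} = \emptyset\); the paper does this via \autoref{cor:L.div.v.easy.case}, using \eqref{eq:cond.det} to conclude that every vector of \(\bL_g\) has divisibility \(1\) in \(\bL\), whereas \(\cW^\pex_{\OG6}\) consists of vectors of divisibility \(2\). (The invocation of the kernel \((\IZ/2\IZ)^8\) of the representation map is not needed here.) Finally, your restriction of \(|g|\) to \eqref{eq:possible.orders} by "standard bounds" is too quick: beyond confining the prime divisors to \(\{2,3,5\}\), excluding \(9\), \(15\), \(16\), \(20\), \(24\), \(25\) requires the detailed analysis of coinvariant lattices of powers of \(g\) carried out in the paper's order-by-order propositions.
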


Heuristically, given an action of a group \(G\) of birational transformations on an irreducible holomorphic symplectic manifold (not necessarily of type \(\OG6\)), a coinvariant sublattice of smaller rank corresponds to a larger family of manifolds admitting such an action. 
More precisely, the number of moduli is given by \(\rank(\bL^G) - 2\) (see \cite[§4]{Boissiere.Camere.Sarti:classif.aut.p-elem.lattices}).
For this reason, in \autoref{tab:sp.OG6} the pairs \((\bL^g,\bL_g)\) are ordered by increasing \(\rank(\bL_g)\).
The column `example' provides a matrix relative to a standard system of generators of \(3\bU\oplus 2[-2]\) representing an isometry \(g\) with given pair \((\bL^g,\bL_g)\).

The existence of birational transformations with given (co)invariant sublattice in \autoref{thm:main.sp.OG6} is ensured by the Torelli theorem for manifolds of type \(\OG6\) (\autoref{thm:torelli.OG6}). It would be very interesting to produce explicit examples of symplectic birational transformations inducing the isometries described in \autoref{tab:sp.OG6}.
Although there are ways to construct nonsymplectic automorphisms geometrically (see for instance \cite{Grossi:induced}), the lack of explicit families of manifolds of type \(\OG6\), beside desingularized moduli spaces of sheaves, makes this task difficult. We plan to study a special class of examples in a future work.

\subsection{Historical note}\label{Related works}
For a K3 surface \(X\) the groups \(\Aut(X)\) and \(\Bir(X)\) coincide.
All possible finite groups~\(G\) of symplectic automorphisms were classified by Nikulin~\cite{Nikulin:finite.aut.groups.K3.announcement, Nikulin:finite.aut.groups.K3} in the abelian case and then characterized by Mukai~\cite{Mukai:finite.groups.Mathieu.K3} in the general case as certain subgroups of the Mathieu group~\(\mathrm{M}_{23}\). 
Alternative proofs were given later by Xiao~\cite{Xiao:galois.covers.K3} and Kond\=o~\cite{Kondo:Niemeier.Mathieu.sp.aut.K3}. 
The invariant and coinvariant sublattices for \(G\) of prime order were classified by Morrison~\cite{Morrison:K3.large.picard} and by Garbagnati and Sarti~\cite{Garbagnati.sarti:K3.symplectic.prime}. 
Their classification was extended to all other groups by Hashimoto~\cite{Hashimoto:finite.symplectic.K3}. 
(For a survey, see \cite[Chapter~15]{Huybrechts:lectures.K3} or \cite{Kondo:survey.sp.K3}).

Finite groups of symplectic automorphisms on \(\K3^{[2]}\) fourfolds were studied first by Camere~\cite{Camere:symplectic.involutions.K3^2} and Mongardi~\cite{Mongardi:symplectic.involutions.K3^2, Mongardi:symplectic.aut.K3^2} in the case of prime order, and then by Höhn and Mason \cite{Hoehn.Mason:finite.groups.sp.aut.K3^2} in the general case.
Finite groups of symplectic automorphisms on manifolds of type~\(\K3^{[n]}\) with \(n > 2\) were treated by Huybrechts~\cite{Huybrechts:derived.cat.K3.sp.aut.Conway.group} and Mongardi~\cite{Mongardi:natural.deformations.symplectic.K3^n, Mongardi:towards.classification.symp.K3^n},
who in particular conjectured that all of them appear as certain subgroups of the Conway group~\(\mathrm{Co}_1\).
Partial results on symplectic automorphisms of \(\Kum^{[n]}\) manifolds were obtained by Mongardi, Tari and Wandel \cite{Mongardi.Tari.Wandel:kummer.fourfolds}.

\subsection{Contents of the paper} The definitions and the basic results used in this paper are recalled in \autoref{sec:preliminaries}, where we also explain how the geometrical problem is translated into an arithmetical one thanks to the Torelli theorem for irreducible holomorphic symplectic manifolds. 
The proofs of the statements above are carried out in \autoref{sec:sp.OG6}.

\subsection*{Acknowledgments}

We wish to warmly thank Fabio Bernasconi, Samuel Boissière, Simon Brandhorst, Chiara Camere, Alberto Cattaneo, Christian Lehn and Giovanni Mongardi for sharing their insights with us. A special acknowledgment goes to Maxim Smirnov, without whom this paper would probably not exist.

\begin{center}
\begin{longtable}{llll}
 \caption{Isometries \(g \in \OO(\bL)\) induced by symplectic birational transformations of finite order on manifolds of type~\(\OG6\).}
 \label{tab:sp.OG6} \\
 
 \toprule
 \(|g|\) & \(\bL^g\) & \(\bL_g\) & example \\ 
 \midrule
 \endfirsthead

 \multicolumn{4}{c}%
 {\tablename\ \thetable{}, follows from previous page} \\
 \midrule
 \(|g|\) & \(\bL^g\) & \(\bL_g\) & example \\ 
 \midrule
 \endhead

 \midrule
 \multicolumn{4}{c}{Continues on next page} \\
 \endfoot

 \bottomrule
 \endlastfoot
 
 \(1\) & \(\bL\) & \(0\) & \(\id\) \\
 \midrule
 \(2\) & \(2\bU \oplus [2] \oplus 2[-2]\) & \([-2]\) & {\tiny \(\begin{pmatrix} 1 & 0 & 0 & 0 & 0 & 0 & 0 & 0 \\ 0 & 1 & 0 & 0 & 0 & 0 & 0 & 0 \\ 0 & 0 & 1 & 0 & 0 & 0 & 0 & 0 \\ 0 & 0 & 0 & 1 & 0 & 0 & 0 & 0 \\ 0 & 0 & 0 & 0 & 0 & 1 & 0 & 0 \\ 0 & 0 & 0 & 0 & 1 & 0 & 0 & 0 \\ 0 & 0 & 0 & 0 & 0 & 0 & 1 & 0 \\ 0 & 0 & 0 & 0 & 0 & 0 & 0 & 1 \end{pmatrix}\)} \\
 \(2\) & \(\bU \oplus 2[2] \oplus 2[-2]\) & \(2[-2]\) & {\tiny \(\begin{pmatrix} 1 & 0 & 0 & 0 & 0 & 0 & 0 & 0 \\ 0 & 1 & 0 & 0 & 0 & 0 & 0 & 0 \\ 0 & 0 & 0 & 1 & 0 & 0 & 0 & 0 \\ 0 & 0 & 1 & 0 & 0 & 0 & 0 & 0 \\ 0 & 0 & 0 & 0 & 0 & 1 & 0 & 0 \\ 0 & 0 & 0 & 0 & 1 & 0 & 0 & 0 \\ 0 & 0 & 0 & 0 & 0 & 0 & 1 & 0 \\ 0 & 0 & 0 & 0 & 0 & 0 & 0 & 1 \end{pmatrix}\)} \\
 \(2\) & \(3[2] \oplus 2[-2]\) & \(3[-2]\) & {\tiny \(\begin{pmatrix} 0 & 1 & 0 & 0 & 0 & 0 & 0 & 0 \\ 1 & 0 & 0 & 0 & 0 & 0 & 0 & 0 \\ 0 & 0 & 0 & 1 & 0 & 0 & 0 & 0 \\ 0 & 0 & 1 & 0 & 0 & 0 & 0 & 0 \\ 0 & 0 & 0 & 0 & 0 & 1 & 0 & 0 \\ 0 & 0 & 0 & 0 & 1 & 0 & 0 & 0 \\ 0 & 0 & 0 & 0 & 0 & 0 & 1 & 0 \\ 0 & 0 & 0 & 0 & 0 & 0 & 0 & 1 \end{pmatrix}\)} \\
 \(2\) & \(3[2] \oplus [-2] \) & \(\bD_4\) & {\tiny \(\begin{pmatrix} 3 & 4 & 2 & -2 & 2 & -2 & 2 & 0 \\ 4 & 3 & 2 & -2 & 2 & -2 & 2 & 0 \\ -2 & -2 & -1 & 2 & -1 & 1 & -1 & 0 \\ 2 & 2 & 2 & -1 & 1 & -1 & 1 & 0 \\ -2 & -2 & -1 & 1 & -1 & 2 & -1 & 0 \\ 2 & 2 & 1 & -1 & 2 & -1 & 1 & 0 \\ -4 & -4 & -2 & 2 & -2 & 2 & -3 & 0 \\ 0 & 0 & 0 & 0 & 0 & 0 & 0 & 1 \end{pmatrix}\)} \\
 \midrule
 \(3\) & \(2\bU \oplus [6] \oplus [-2]\) & \(\bA_2\) & {\tiny \(\begin{pmatrix} 1 & 1 & 1 & -1 & 0 & 0 & 0 & 0 \\ 1 & 0 & 0 & 0 & 0 & 0 & 0 & 0 \\ -1 & 0 & 0 & 1 & 0 & 0 & 0 & 0 \\ 1 & 0 & 1 & 0 & 0 & 0 & 0 & 0 \\ 0 & 0 & 0 & 0 & 1 & 0 & 0 & 0 \\ 0 & 0 & 0 & 0 & 0 & 1 & 0 & 0 \\ 0 & 0 & 0 & 0 & 0 & 0 & 1 & 0 \\ 0 & 0 & 0 & 0 & 0 & 0 & 0 & 1 \end{pmatrix}\)} \\
 \(3\) & \(\bU \oplus 2[6]\) & \(2\bA_2\) & {\tiny \(\begin{pmatrix} 1 & 1 & 1 & -1 & 0 & 0 & 0 & 0 \\ 1 & 0 & 0 & 0 & 0 & 0 & 0 & 0 \\ -1 & 0 & 0 & 1 & 0 & 0 & 0 & 0 \\ 1 & 0 & 1 & 0 & 0 & 0 & 0 & 0 \\ 0 & 0 & 0 & 0 & 1 & 1 & 1 & 0 \\ 0 & 0 & 0 & 0 & 1 & 0 & 0 & 0 \\ 0 & 0 & 0 & 0 & -2 & 0 & -1 & 0 \\ 0 & 0 & 0 & 0 & 0 & 0 & 0 & 1 \end{pmatrix}\)} \\
 \midrule
  \(4\) & \(\bU \oplus [2] \oplus [4] \oplus [-2]\) & \(\bA_3\) & {\tiny \(\begin{pmatrix} -1 & 2 & -1 & -1 & -1 & -1 & 0 & 0 \\ 2 & -1 & 1 & 1 & 1 & 1 & 0 & 0 \\ 1 & -1 & 1 & 1 & 1 & 0 & 0 & 0 \\ 1 & -1 & 1 & 1 & 0 & 1 & 0 & 0 \\ 1 & -1 & 0 & 1 & 1 & 1 & 0 & 0 \\ 1 & -1 & 1 & 0 & 1 & 1 & 0 & 0 \\ 0 & 0 & 0 & 0 & 0 & 0 & 1 & 0 \\ 0 & 0 & 0 & 0 & 0 & 0 & 0 & 1 \end{pmatrix}\)} \\
 \(4\) & \(3[2] \oplus [-2]\) & \(\bD_4\) &  {\tiny \(\begin{pmatrix} 0 & 1 & -1 & -1 & 0 & 0 & -1 & 0 \\ 1 & 0 & 1 & 1 & 0 & 0 & 1 & 0 \\ 1 & -1 & 3 & 2 & 1 & -1 & 2 & 0 \\ 1 & -1 & 2 & 1 & 0 & 0 & 1 & 0 \\ -1 & 1 & -2 & -1 & 0 & 1 & -1 & 0 \\ 1 & -1 & 2 & 1 & 1 & 0 & 1 & 0 \\ 0 & 0 & -2 & 0 & 0 & 0 & -1 & 0 \\ 0 & 0 & 0 & 0 & 0 & 0 & 0 & 1 \end{pmatrix}\)} \\
 \(4\) & \(3[2] \oplus [-4]\) & \(\bA_3 \oplus [-2]\) & {\tiny \(\begin{pmatrix} 0 & 1 & 0 & 0 & 0 & 0 & 0 & 0 \\ 1 & 0 & 0 & 0 & 0 & 0 & 0 & 0 \\ 0 & 0 & 0 & 1 & -1 & -1 & -1 & 0 \\ 0 & 0 & 1 & 0 & 1 & 1 & 1 & 0 \\ 0 & 0 & 1 & -1 & 1 & 2 & 1 & 0 \\ 0 & 0 & 0 & 0 & 1 & 1 & 1 & 0 \\ 0 & 0 & 0 & 0 & 0 & -2 & -1 & 0 \\ 0 & 0 & 0 & 0 & 0 & 0 & 0 & 1 \end{pmatrix}\)} \\
 \(4\) & \(2[2] \oplus [4]\) & \(\bD_5\) & {\tiny \(\begin{pmatrix} 3 & 4 & 2 & 2 & 2 & 2 & 2 & 4 \\ 4 & 3 & 2 & 2 & 2 & 2 & 2 & 4 \\ 2 & 2 & 2 & 2 & 1 & 2 & 1 & 3 \\ 2 & 2 & 2 & 2 & 2 & 1 & 1 & 3 \\ 2 & 2 & 2 & 1 & 2 & 2 & 1 & 3 \\ 2 & 2 & 1 & 2 & 2 & 2 & 1 & 3 \\ -4 & -4 & -2 & -2 & -2 & -2 & -3 & -4 \\ -8 & -8 & -6 & -6 & -6 & -6 & -4 & -11 \end{pmatrix}\)} \\
 \midrule
 \(5\) & \(\bU \oplus \begin{pmatrix} 4 & 2 \\ 2 & 6 \end{pmatrix}\) & \(\bA_4\) & {\tiny \(\begin{pmatrix} 1 & 0 & 0 & 0 & 0 & 0 & 0 & 0 \\ 0 & 1 & 0 & 0 & 0 & 0 & 0 & 0 \\ 0 & 0 & 0 & 2 & 2 & 1 & 1 & 1 \\ 0 & 0 & 1 & 0 & -1 & -1 & 0 & -1 \\ 0 & 0 & -1 & 2 & 2 & 2 & 1 & 1 \\ 0 & 0 & 0 & 1 & 2 & 1 & 1 & 1 \\ 0 & 0 & 0 & -2 & -2 & 0 & -1 & 0 \\ 0 & 0 & 0 & -2 & -2 & -2 & -2 & -1 \end{pmatrix}\)} \\
 \midrule
 \(6\) & \(\bU \oplus [2] \oplus [6] \oplus [-2]\) & \(\bA_2 \oplus [-2]\) & {\tiny \(\begin{pmatrix} 0 & 1 & 0 & 0 & 0 & 0 & 0 & 0 \\ 1 & 0 & 0 & 0 & 0 & 0 & 0 & 0 \\ 0 & 0 & 0 & 1 & -1 & 0 & 0 & 0 \\ 0 & 0 & 1 & 0 & 1 & 0 & 0 & 0 \\ 0 & 0 & 1 & -1 & 1 & 1 & 0 & 0 \\ 0 & 0 & 0 & 0 & 1 & 0 & 0 & 0 \\ 0 & 0 & 0 & 0 & 0 & 0 & 1 & 0 \\ 0 & 0 & 0 & 0 & 0 & 0 & 0 & 1 \end{pmatrix}\)} \\
 \(6\) & \(3[2] \oplus [-2]\) & \(\bD_4\) & {\tiny \(\begin{pmatrix} 4 & 3 & 2 & -2 & 2 & -2 & 2 & 0 \\ 3 & 3 & 1 & -1 & 2 & -2 & 2 & 0 \\ -2 & -1 & 0 & 1 & -1 & 1 & -1 & 0 \\ 2 & 1 & 1 & 0 & 1 & -1 & 1 & 0 \\ -2 & -2 & -1 & 1 & -1 & 2 & -1 & 0 \\ 2 & 2 & 1 & -1 & 2 & -1 & 1 & 0 \\ -4 & -4 & -2 & 2 & -2 & 2 & -3 & 0 \\ 0 & 0 & 0 & 0 & 0 & 0 & 0 & 1 \end{pmatrix}\)} \\
 \(6\) & \(2[2] \oplus [6] \oplus [-2]\) & \(\bA_2 \oplus 2[-2]\) & {\tiny \(\begin{pmatrix} 0 & 1 & 0 & 0 & 0 & 0 & 0 & 0 \\ 1 & 0 & 0 & 0 & 0 & 0 & 0 & 0 \\ 0 & 0 & 0 & 1 & 0 & 0 & 0 & 0 \\ 0 & 0 & 1 & 0 & 0 & 0 & 0 & 0 \\ 0 & 0 & 0 & 0 & 0 & 1 & 0 & 0 \\ 0 & 0 & 0 & 0 & 1 & 1 & 1 & 0 \\ 0 & 0 & 0 & 0 & 0 & -2 & -1 & 0 \\ 0 & 0 & 0 & 0 & 0 & 0 & 0 & 1 \end{pmatrix}\)} \\
 \(6\) & \(2[2] \oplus [6]\) & \(2\bA_2\oplus [-2]\) & {\tiny \(\begin{pmatrix} 0 & 1 & 0 & 0 & 0 & 0 & 0 & 0 \\ 1 & 0 & 0 & 0 & 0 & 0 & 0 & 0 \\ 0 & 0 & 0 & 1 & 0 & 0 & 0 & 0 \\ 0 & 0 & 1 & 1 & 0 & 0 & 1 & 0 \\ 0 & 0 & 0 & 0 & 0 & 1 & 0 & 0 \\ 0 & 0 & 0 & 0 & 1 & 1 & 0 & 1 \\ 0 & 0 & 0 & -2 & 0 & 0 & -1 & 0 \\ 0 & 0 & 0 & 0 & 0 & -2 & 0 & -1 \end{pmatrix}\)} \\
 \midrule
 \(8\) & \(2[2] \oplus [4]\) & \(\bD_5\) & {\tiny \(\begin{pmatrix} 2 & 1 & 1 & 1 & 2 & 1 & 1 & 2 \\ 2 & 2 & 1 & 2 & 2 & 2 & 1 & 3 \\ 1 & 1 & 1 & 2 & 2 & 1 & 1 & 2 \\ 2 & 1 & 2 & 2 & 2 & 2 & 1 & 3 \\ 2 & 1 & 1 & 2 & 3 & 3 & 2 & 3 \\ 2 & 1 & 1 & 2 & 3 & 2 & 1 & 3 \\ -2 & 0 & 0 & -2 & -2 & -2 & -1 & -2 \\ -6 & -4 & -4 & -6 & -8 & -6 & -4 & -9 \end{pmatrix}\)} \\
 \midrule
 \(10\) & \([2] \oplus \begin{pmatrix} 4 & 2 \\ 2 & 6 \end{pmatrix}\) & \(\bA_4 \oplus [-2]\) & {\tiny \(\begin{pmatrix} 0 & 1 & 0 & 0 & 0 & 0 & 0 & 0 \\ 1 & 0 & 0 & 0 & 0 & 0 & 0 & 0 \\ 0 & 0 & 1 & 1 & 1 & 0 & 1 & 0 \\ 0 & 0 & 1 & 0 & -1 & 0 & 0 & 0 \\ 0 & 0 & 0 & 1 & 2 & 1 & 1 & 1 \\ 0 & 0 & 0 & 0 & 1 & 0 & 0 & 0 \\ 0 & 0 & -2 & 0 & 0 & 0 & -1 & 0 \\ 0 & 0 & 0 & 0 & -2 & 0 & 0 & -1 \end{pmatrix}\)} \\
 \midrule
 \(12\) & \(2[2] \oplus [4]\) & \(\bD_5\) & {\tiny \(\begin{pmatrix} 2 & 2 & 2 & 1 & 2 & 2 & 1 & 3 \\ 2 & 2 & 1 & 2 & 2 & 2 & 1 & 3 \\ 1 & 2 & 2 & 2 & 2 & 2 & 1 & 3 \\ 2 & 1 & 2 & 2 & 2 & 2 & 1 & 3 \\ 2 & 2 & 2 & 2 & 3 & 3 & 1 & 4 \\ 2 & 2 & 2 & 2 & 3 & 4 & 2 & 4 \\ -2 & -2 & -2 & -2 & -2 & -4 & -1 & -4 \\ -6 & -6 & -6 & -6 & -8 & -8 & -4 & -11 \end{pmatrix}\)} \\
\end{longtable}
\end{center}

\section{Preliminaries} \label{sec:preliminaries}

This section not only explains our conventions and fixes the notation used throughout the paper, but it also reviews some minor results -- certainly known to experts -- for which we could not find a satisfactory treatment in the literature.

In \autoref{subsec:lattices} we introduce our conventions on lattices, following Nikulin \cite{Nikulin:int.sym.bilinear.forms} and Miranda and Morrison \cite{Miranda.Morrison}. 
In \autoref{subsec:prim.emb} we explain the algorithm, based on Nikulin's work, that we use to find all primitive embeddings of two given lattices.
In \autoref{subsec:isometries} we collect various results related to lattice isometries.
In \autoref{subsec:m-elementary} we introduce the notion of \(m\)-elementary lattice and we explain how to enumerate such lattices.
In \autoref{subsec:Hodge.structures} we recall the definition of Hodge structure and other related concepts, as given by Morrison \cite{Morrison:K3.large.picard} (also cf. \cite[§3.1]{Huybrechts:lectures.K3}). Finally, in \autoref{subsec:OG6} we state the Torelli theorem in the case of manifolds of type \(\OG6\), which is the key to characterize groups of automorphisms and birational transformations in an arithmetical way.

\subsection{Lattices} \label{subsec:lattices}
In this paper, a \emph{lattice} of \emph{rank} \(r\) is a free finitely generated \(\IZ\)-module \(L \cong \IZ^r\) endowed with a nondegenerate symmetric bilinear pairing \(L \otimes L \rightarrow \IZ\) denoted \(e \otimes f \mapsto e\cdot f\). 
We write \(e^2 = e \cdot e\). We say that \(L\) is \emph{even} if \(e^2 \in 2\IZ\) for each \(e \in L\).
The lattice \(L(n)\) is obtained by composing the pairing with multiplication by \(n \in \IZ \setminus \set{0}\).
The \emph{rank}, \emph{signature} and \emph{determinant} of \(L\) are denoted \(\rank(L)\), \(\sign(L)\), \(\det(L)\), respectively. 
A lattice is \emph{unimodular} if \(|{\det(L)}| = 1\), and it is \emph{hyperbolic} if \(\sign(L) = (1,\rank(L)-1)\).

The \emph{dual} of \(L\) is defined as 
\(
    L^\vee = \Set{x \in L\otimes \IQ}{x \cdot y \in \IZ \text{ for each \(y \in L\)}}. 
\)
The group 
\[
    L^\sharp \coloneqq L^\vee/L
\]
is called the \emph{discriminant group} of~\(L\). It is a finite abelian group of order \(|L^\sharp| = |{\det(L)}|\). 
For even~\(L\), the \(\IQ\)-linear extension of the pairing on \(L\) endows \(L^\sharp\) with a \(\IQ/2\IZ\)-valued quadratic form.
There is a natural homomorphism \(\OO(L) \rightarrow \OO(L^\sharp)\). 
The image of a subgroup \(G \subset \OO(L)\) through this homomorphism is denoted \(G^\sharp\). 
The \emph{divisibility} of a vector \(v \in L\) is defined as
\[
    (v, L) \coloneqq \gcd\Set{v \cdot w}{w \in L}.
\]
Note that \(v/(v, L)\) defines an element of \(L^\vee\), hence of \(L^\sharp\). 

The unimodular indefinite even lattice of rank \(2\) is denoted \(\bU\). The negative definite ADE lattices are denoted \(\bA_n,\bD_n,\bE_n\). 
The notation \([m]\) with \(m \in \IZ \setminus \set{0}\) denotes a lattice of rank \(1\) generated by a vector of square \(m\).

Two even lattices \(L_1, L_2\) are said to belong to the same \emph{genus} if \(\sign(L_1) = \sign(L_2)\) and \(L_1^\sharp \cong L_2^\sharp\) as finite quadratic forms (cf. \cite[Corollary 1.9.4]{Nikulin:int.sym.bilinear.forms}).
All lattices appearing in this paper are unique in their genus. For indefinite lattices this follows from Nikulin's or Miranda and Morrison's criteria (\cite[Theorem~3.6.2]{Nikulin:int.sym.bilinear.forms}, \cite[Theorem~1.14.2]{Nikulin:int.sym.bilinear.forms}, \cite[Corollary~7.8]{Miranda.Morrison}), and for definite lattices from the Smith--Minkowski--Siegel formula (see~\cite{Conway.Sloane:mass.formula}).

\subsection{Primitive embeddings} \label{subsec:prim.emb} 
An embedding of lattices \(M \subset L\) is called \emph{primitive} if \(L/M\) is a free abelian group. In this case we write \(M \hookrightarrow L\) and we denote by \(N \coloneqq M^\perp\) the \emph{orthogonal complement} of \(M\) in \(L\). We refer to \cite[Proposition~1.5.1 and Proposition~1.15.1]{Nikulin:int.sym.bilinear.forms} for more details on what follows.

A primitive embedding of even lattices \(M \hookrightarrow L\) is given by a \emph{gluing subgroup} \(H \subset M^\sharp\) and a \emph{gluing isometry} \(\gamma \colon H \rightarrow H' \subset N^\sharp\).
If \(\Gamma\) denotes the \emph{gluing graph} of \(\gamma\) in \(M^\sharp \oplus N(-1)^\sharp\), the following identification between finite quadratic forms holds (the quadratic form on the right hand side is the one induced by \(M^\sharp \oplus N(-1)^\sharp\)):
\begin{equation} \label{eq:L^sharp=Gamma^perp/Gamma}
  L^\sharp \cong \Gamma^\perp/\Gamma.  
\end{equation}
Two primitive embeddings \(M \hookrightarrow L\) are equivalent under the action of \(\OO(M)\) and \(\OO(L)\) if and only if the corresponding groups \(H\) and \(H'\) are conjugate under the action of \(\OO(M)\) and \(\OO(N(-1)) = \OO(N)\) in a way that commutes with the gluing isometries.

Assume now that \(L\) is unique in its genus. By \cite[Proposition~1.15.1]{Nikulin:int.sym.bilinear.forms}, an embedding \(M \hookrightarrow L\) is equivalently given by a subgroup \(K \subset L^\sharp\), which here we call \emph{embedding subgroup}, and an isometry \(\xi\colon K \rightarrow K' \subset M^\sharp\).
If \(\Xi\) denotes the graph of \(\xi\) in \(L^\sharp \oplus M(-1)^\sharp\), the following identification between finite quadratic forms holds (the quadratic form on the right hand side is the one induced by \(L^\sharp \oplus M(-1)^\sharp\)):
\begin{equation} \label{eq:(S^perp)^sharp=Xi^perp/Xi}
    N^\sharp \cong \Xi^\perp/\Xi.
\end{equation}

We define the \emph{gluing} and \emph{embedding index} to be \(h \coloneqq |H|\) and \(k \coloneqq |K|\), respectively. They satisfy
\begin{equation} \label{eq:gluing.embedding.index}
    h^2 \cdot |{\det(L)}| = |{\det(M)} \cdot {\det(N)}|, \qquad k^2 \cdot |{\det(N)}| = |{\det(L)} \cdot {\det(M)}|,
\end{equation}
from which it follows that \(hk = |{\det(M)}|\).

Given \(M\) and \(L\), we proceed in the following way in order to find all primitive embeddings \(M \hookrightarrow L\) up to the action of \(\OO(L)\).
\begin{enumerate}[(i)]
    \item We determine the embedding index \(k\) from \eqref{eq:gluing.embedding.index}.
    \item We find all embedding subgroups \(K \subset L^\sharp\) with an isometric counterpart \(K' \subset M^\sharp\).
    \item For each isometry \(\xi\colon K\rightarrow K'\) we compute \(\sign(M^\perp)\) and \((M^\perp)^\sharp\) from \eqref{eq:(S^perp)^sharp=Xi^perp/Xi}, determining the possible genera for \(M^\perp\).
    \item We find all possible \(N = M^\perp\) in these genera.
    \item For each \(N\), we compute the gluing index \(h\) of \(M \hookrightarrow L\) with \(N \cong M^\perp\).
    \item We find all gluing subgroups \(H \subset M^\sharp\) with an isometric counterpart \(H' \subset N(-1)^\sharp\) up to the action of \(\OO(M)\) and \(\OO(N(-1)) = \OO(N)\).
\end{enumerate}

Given a vector \(v \in M\), we are interested in computing \((v, L)\). 
Of course, \((v, L)\) depends on the primitive embedding \(M \hookrightarrow L\). In general, it only holds \((v, L) \mid (v, M)\).
The following lemma provides a useful formula.

\begin{lemma} \label{lem:L.div.v}
Let \(M,L\) be two lattices. If a primitive embedding \(M \hookrightarrow L\) is defined by the gluing subgroup \(H \subset M^\sharp\), then for each \(v \in M\) it holds
\[
    (v, L) = \max\Set{d \in \IN}{v/d \in H^\perp}.
\]
\end{lemma}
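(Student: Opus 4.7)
The plan is to unpack the description of \(L\) in terms of the gluing data and then translate the divisibility condition into a pairing condition on \(M^\sharp\).

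First, since the primitive embedding \(M \hookrightarrow L\) is defined by \(H\) together with the gluing isometry \(\gamma \colon H \to H' \subset N^\sharp\), we have \(M \oplus N \subset L \subset M^\vee \oplus N^\vee\), with \(L/(M \oplus N)\) identified with the gluing graph \(\Gamma = \{(h, \gamma(h)) \mid h \in H\}\). Explicitly, I would begin by recording that
\[
L = \{(x, y) \in M^\vee \oplus N^\vee \mid \overline{x} \in H,\ \overline{y} = \gamma(\overline{x})\}.
\]

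Next, since \(v \in M\) and the orthogonal decomposition \(M \otimes \IQ \perp N \otimes \IQ\) persists in \(L \otimes \IQ\), the pairing of \(v\) with any \(y \in N^\vee\) vanishes. Setting \(\widetilde{H} \subset M^\vee\) to be the preimage of \(H\) under the natural projection \(M^\vee \to M^\sharp\), the description above gives
\[
(v, L) = \gcd\{v \cdot x \mid x \in \widetilde{H}\},
\]
since as \((x,y)\) ranges over \(L\), the first component \(x\) ranges over \(\widetilde{H}\).

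Finally, I would argue that a positive integer \(d\) divides \((v, L)\) if and only if \((v/d) \cdot x \in \IZ\) for every \(x \in \widetilde{H}\). Since \(M \subset \widetilde{H}\), this condition forces \(v/d \in M^\vee\), so \(v/d\) defines a class in \(M^\sharp\); the remaining condition on the quotient \(\widetilde{H}/M = H\) is precisely that this class lies in \(H^\perp\) for the \(\IQ/\IZ\)-valued bilinear form on \(M^\sharp\). Conversely, if \(v/d \in H^\perp\) then \(v/d\) pairs integrally with all of \(\widetilde{H}\). Hence the two sets \(\{d \in \IN \mid d \text{ divides } (v,L)\}\) and \(\{d \in \IN \mid v/d \in H^\perp\}\) coincide, and in particular their maxima agree.

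I do not foresee a significant obstacle: the argument reduces to carefully identifying \(L\) inside \(M^\vee \oplus N^\vee\) via the gluing data and then passing the integrality condition through the quotient \(\widetilde{H}/M = H\) to read it off as orthogonality in \(M^\sharp\). The only mild subtlety is making sure that the interpretation of the symbol \(v/d \in H^\perp\) in the statement is consistent with \(v/d\) first landing in \(M^\vee\), which is automatic from the \(M\)-integrality part of the condition.
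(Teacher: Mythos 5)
Your proof is correct and follows essentially the same route as the paper's: both reduce the statement to the observation that $d \mid (v,L)$ exactly when $v/d$ pairs integrally with $L$, and then read this off as orthogonality to $H$ via the gluing description of $L$ inside $M^\vee \oplus N^\vee$. The only difference is presentational — you compute $(v,L)$ as a gcd over the first components of $L$ directly, whereas the paper passes through the identification $L^\sharp \cong \Gamma^\perp/\Gamma$ — and your handling of the subtlety that $v/d \in H^\perp$ presupposes $v/d \in M^\vee$ is exactly right.
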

\proof
Let \(\Gamma\) be the gluing graph. The assertion follows from the identification~\eqref{eq:L^sharp=Gamma^perp/Gamma} and the elementary observation that \((v,L) = \max\Set{d \in \IN}{v/d \in L^\sharp}\), so
\[ 
    (v,L) = \max\Set{d \in \IN}{(v/d,0) \in \Gamma^\perp} = \max\Set{d \in \IN}{v/d \in H^\perp}. \qedhere
\]
\endproof 

\begin{corollary} \label{cor:L.div.v.easy.case}
Let \(M \hookrightarrow L \) be a primitive embedding. If \(|{\det(M^\perp)}| = |{\det(L)} \cdot {\det(M)}|\), then \((v, L) = 1\) for each \(v \in M\).
\end{corollary}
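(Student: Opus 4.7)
The plan is to combine the arithmetic identities \eqref{eq:gluing.embedding.index} on the gluing and embedding indices with \autoref{lem:L.div.v}. The hypothesis should force the gluing subgroup \(H \subset M^\sharp\) to be as large as possible (in fact, all of \(M^\sharp\)), after which the divisibility condition appearing in \autoref{lem:L.div.v} becomes as restrictive as possible.

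The first step is a direct computation with the indices. Substituting the assumption \(|{\det(M^\perp)}| = |{\det(L)}|\cdot|{\det(M)}|\) into the identity \(k^{2}\cdot|{\det(M^\perp)}| = |{\det(L)\cdot\det(M)}|\) yields \(k = 1\); then \(hk = |{\det(M)}|\) gives \(h = |{\det(M)}|\). Since \(|H| = h = |{\det(M)}| = |M^\sharp|\), the gluing subgroup \(H\) must coincide with the whole discriminant group \(M^\sharp\).

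Next, I would identify \(H^\perp\). Because \(M\) is a nondegenerate lattice, the induced \(\IQ/\IZ\)-valued bilinear form on \(M^\sharp\) is nondegenerate, so the orthogonal complement of \(M^\sharp\) inside itself is trivial. Lifting this description to \(M^\vee\) via the quotient \(M^\vee \to M^\sharp\), the condition ``\(v/d \in H^\perp\)'' in \autoref{lem:L.div.v} becomes simply ``\(v/d \in M\)''. For a primitive vector \(v \in M\) the only positive integer satisfying this is \(d = 1\), so \((v, L) = 1\), as claimed.

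I don't foresee any genuine obstacle: the corollary is an immediate unwinding of \autoref{lem:L.div.v} in the extremal case \(H = M^\sharp\). The only minor subtlety worth being careful about is the passage between \(H^\perp\) viewed as a subgroup of \(M^\sharp\) and its lift inside \(M^\vee\); once this identification is made, the rest of the argument is routine.
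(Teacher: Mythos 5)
Your argument is correct and follows essentially the same route as the paper's own one-line proof, just spelled out: the identities \eqref{eq:gluing.embedding.index} force \(k = 1\) and hence \(h = |{\det(M)}|\), so \(H = M^\sharp\) and \(H^\perp = 0\), after which \autoref{lem:L.div.v} gives the claim. The only point where you deviate is the restriction to \emph{primitive} \(v\) in the final step, and rightly so: for \(v = c v_0\) with \(c > 1\) one has \((v,L) = c(v_0,L) > 1\), so the corollary is really a statement about primitive vectors (which is how it is applied, e.g.\ in \autoref{lem:main.divisibility.lemma}, where the relevant vectors of square \(-2\) or \(-4\) and divisibility \(2\) are automatically primitive), a subtlety the paper's own proof glosses over.
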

\proof
Indeed, by \eqref{eq:gluing.embedding.index} the gluing index is equal to \(|{\det(M)}|\), hence \(H^\perp\) is trivial. 
\endproof 

\subsection{Isometries} \label{subsec:isometries}
Given a lattice \(L\) and a subgroup \(G \subset \OO(L)\), we denote by~\(L^G\) the \emph{invariant sublattice}. Its orthogonal complement \(L_G \coloneqq (L^G)^\perp \subset L\) is called the \emph{coinvariant sublattice}. 
Given \(g \in \OO(L)\), we denote by \(L^g\) and \(L_g\) the invariant and coinvariant sublattices, respectively, of the subgroup generated by \(g\).

We adopt the convention that the (real) \emph{spinor norm}
\[
\spin \colon \OO(L) \rightarrow \IR^\times/(\IR^\times)^2 = \set{\pm 1}
\]
takes the value \(+1\) on a reflection \(\rho_v\) induced by a vector \(v\) with \(v^2 < 0\). (We refer the reader to \cite[Chapter~I.10]{Miranda.Morrison}, although there the opposite convention is used.) The kernel of the spinor norm is denoted \(\OO^+(L)\). Equivalently, an isometry \(g \in \OO(L)\) belongs to~\(\OO^+(L)\) if and only if \(g\) preserves the orientation of a positive definite subspace \(V \subset L \otimes \IR\) of maximal dimension (cf. \cite[Lemma~4.1]{Markman:survey} and \cite[Chapter~I, Proposition~11.3]{Miranda.Morrison}).

\begin{lemma}\label{lem:spin}
If \(L_G\) is negative definite, then \(G \subset \OO^+(L)\).
\end{lemma}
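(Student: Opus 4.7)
The plan is to exploit the orientation-preserving characterization of \(\OO^+(L)\) recalled just before the lemma: an isometry \(g \in \OO(L)\) lies in \(\OO^+(L)\) if and only if it preserves the orientation of some (equivalently, every) maximal positive definite subspace \(V \subset L \otimes \IR\).

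First I would note that the negative definite hypothesis on \(L_G\) places all positive directions of \(L \otimes \IR\) inside the invariant part. More precisely, since \(L_G \otimes \IR\) is negative definite and \(L \otimes \IR = (L^G \otimes \IR) \oplus (L_G \otimes \IR)\) as an orthogonal direct sum, the signature of \(L^G \otimes \IR\) must be \((\sign_+(L), 0) \oplus \text{(some negative part)}\); in particular, \(L^G \otimes \IR\) contains a maximal positive definite subspace \(V \subset L \otimes \IR\).

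Next, by definition every \(g \in G\) acts as the identity on \(L^G\) and hence on \(L^G \otimes \IR\). In particular, \(g|_V = \id_V\), so \(g\) trivially preserves the orientation of \(V\). Invoking the equivalent characterization of \(\OO^+(L)\) recalled above (via \cite[Lemma~4.1]{Markman:survey} and \cite[Chapter~I, Proposition~11.3]{Miranda.Morrison}) concludes that \(g \in \OO^+(L)\), and hence \(G \subset \OO^+(L)\).

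There is essentially no obstacle here: the whole argument is a one-line consequence of the orientation description of the spinor kernel together with the trivial observation that isometries pointwise fixing \(L^G\) cannot flip an orientation of a subspace sitting inside \(L^G \otimes \IR\). The only point worth flagging is making sure one uses the \emph{real} spinor norm, so that the orientation characterization (rather than a more delicate statement over \(\IQ_p\)) is the one available.
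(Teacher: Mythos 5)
Your proof is correct, but it takes a slightly different route from the one in the paper. You invoke the orientation-theoretic characterization of \(\OO^+(L)\): since \(L_G\) is negative definite, \(L^G\) is nondegenerate, \(L\otimes\IR = (L^G\otimes\IR)\oplus(L_G\otimes\IR)\) orthogonally, and all of the positive signature sits in \(L^G\otimes\IR\), which therefore contains a maximal positive definite subspace \(V\) fixed pointwise by every \(g\in G\); hence the orientation of \(V\) is preserved. The paper instead works directly with the definition of the real spinor norm via reflections: it writes \(g\), as an element of \(\OO(L\otimes\IR)\), as a product of reflections \(\rho_v\) with \(v\in(L\otimes\IR)_g\subset L_G\otimes\IR\), so that every reflecting vector satisfies \(v^2<0\) and \(\spin(g)=+1\) by the stated convention. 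Both arguments lean on facts recalled in the paragraph preceding the lemma, and both are essentially one line; yours avoids the (mildly nontrivial) refinement of Cartan--Dieudonné that the reflecting vectors can be chosen in the moving space, at the cost of using the equivalence between the kernel of the real spinor norm and orientation preservation, which the paper cites but does not prove. Your closing remark about using the \emph{real} spinor norm is apt, and the only point you might make fully explicit is that \(g\) fixes \(V\) setwise (indeed pointwise), so ``preserving the orientation of \(V\)'' is unambiguous here.
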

\proof
For any \(g \in G\) it holds \(L_g \subset L_G\), therefore \(L_g\) is negative definite. 
The isometry \(g\), seen as an element of \(\OO(L \otimes \IR)\), can be written as the product of reflections \(\rho_{v}\) for certain vectors \(v \in (L\otimes \IR)_g\). 
It follows that \(v^2 < 0\) for each such \(v\), so \(\spin(g) = +1\).
\endproof

\begin{lemma} \label{lem:prime.ord}
If \(g \in \OO(L)\) has prime order \(p\), then \(p - 1 \mid \rank(L_g)\).
\end{lemma}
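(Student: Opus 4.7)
The plan is to pass to rational coefficients and use the structure of $\IQ(\zeta_p)$. First I would consider the action of $g$ on the rational vector space $V \coloneqq L_g \otimes \IQ$, noting that $\dim_\IQ V = \rank(L_g)$. Since $g^p = \id$, the minimal polynomial of $g|_V$ divides $x^p - 1 = (x-1)\Phi_p(x)$, where $\Phi_p(x)$ is the $p$-th cyclotomic polynomial (which is irreducible over $\IQ$).

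Next, I would argue that $g|_V$ has no fixed vector, so the factor $(x-1)$ does not appear in its minimal polynomial. Indeed, the invariant subspace of $g$ acting on $L \otimes \IQ$ equals $L^g \otimes \IQ$, which is orthogonal to $V$, so $V^g = 0$. Since $p$ is prime and $g$ has order exactly $p$ on $V$, the minimal polynomial of $g|_V$ is then forced to be $\Phi_p(x)$.

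From here the conclusion is immediate: $V$ becomes a module over $\IQ[x]/(\Phi_p(x)) \cong \IQ(\zeta_p)$, hence a $\IQ(\zeta_p)$-vector space. Since $[\IQ(\zeta_p) : \IQ] = p - 1$, the $\IQ$-dimension of $V$ is divisible by $p - 1$, giving $p - 1 \mid \rank(L_g)$.

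The main (minor) subtlety is justifying that $V$ contains no nonzero $g$-invariant vector; everything else is formal. This relies on the orthogonality of $L^g$ and $L_g$ together with the fact that orthogonality is preserved after tensoring with $\IQ$, which holds because the bilinear form on $L$ is nondegenerate.
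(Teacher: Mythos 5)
Your proof is correct and is essentially the argument the paper gives: the paper observes that, since \(g\) is defined over \(\IZ\), the eigenspaces in \(L\otimes\IC\) for the primitive \(p\)th roots of unity all have the same dimension, which is exactly the content of your statement that \(L_g\otimes\IQ\) is a \(\IQ(\zeta_p)\)-vector space. Your write-up just spells out the two implicit points (that \(1\) is not an eigenvalue of \(g\) on \(L_g\otimes\IQ\), and the resulting divisibility by \([\IQ(\zeta_p):\IQ]=p-1\)) in more detail than the paper's one-line proof.
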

\proof
Since \(g\) is defined over \(\IZ\), its eigenspaces in \(L \otimes \IC\) associated with the primitive \(p\)th roots of unity have the same dimension.
\endproof 

\subsection{\texorpdfstring{\(m\)}{m}-elementary lattices} \label{subsec:m-elementary}

Given \(m \in \IN\), we say that an abelian group \(A\) is \emph{\(m\)-elementary} if \(m\alpha = 0\) for each \(\alpha \in A\).
The length of \(A\) of an abelian group is denoted by~\(\ell(A)\), and its \(p\)-length by \(\ell_p(A)\).
We say that a lattice \(L\) is \emph{\(m\)-elementary} if \(L^\sharp\) is an \(m\)-elementary abelian group. 
An \(m\)-elementary lattice \(L\) satisfies
\begin{equation} \label{eq:bound.m-elementary}
    |{\det(L)}| \leq m^{\ell(L^\sharp)} \leq m^{\rank(L)}.
\end{equation}

The following lemma is usually stated for \(m\) a prime number, but the same proof works for general \(m\) (see for instance \cite[§5.3]{Boissiere.NieperWisskirchen.Sarti:smith.theory}). The details are left to the reader.

\begin{lemma} \label{lem:unimodular.m-el}
If \(\Lambda\) is a unimodular lattice and \(f \in \OO(\Lambda)\) has order~\(m\), then \(\Lambda^f\) and \(\Lambda_f\) are \(m\)-elementary lattices. \qed
\end{lemma}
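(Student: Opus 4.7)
The plan is to combine averaging over the cyclic group $\langle f \rangle$ with the unimodularity of $\Lambda$. First I would introduce the $\IQ$-linear projector
\[
    \pi^f := \frac{1}{m}\sum_{i=0}^{m-1} f^i
\]
acting on $\Lambda \otimes \IQ$; this is the orthogonal projection onto $(\Lambda\otimes\IQ)^f$, and I set $\pi_f := \id - \pi^f$. The crucial observation is that although $\pi^f$ and $\pi_f$ are not defined over $\IZ$, their multiples by $m$ are: for any $x \in \Lambda$ the element $m\pi^f(x) = \sum_{i=0}^{m-1} f^i(x)$ is both $f$-invariant and integral, hence lies in $\Lambda^f$, and analogously $m\pi_f(x) = mx - \sum_i f^i(x)$ lies in $\Lambda_f$ (its orthogonality to $\Lambda^f$ follows from the invariance relation $f^{-i}(z)=z$ for $z\in\Lambda^f$).

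The second ingredient is the identification of $(\Lambda^f)^\vee$ with $\pi^f(\Lambda)$, forced by the unimodularity of $\Lambda$. Indeed, any integer-valued linear form on $\Lambda^f$ extends to one on $\Lambda$ (since $\Lambda^f$ is easily seen to be primitive in $\Lambda$), and by unimodularity such an extension is of the form $y \mapsto x\cdot y$ for some $x \in \Lambda$; its restriction to $\Lambda^f$ is then represented precisely by $\pi^f(x)$. Composing with the previous step, for every $v \in (\Lambda^f)^\vee$ I can write $v = \pi^f(x)$ with $x \in \Lambda$, so that $mv = \sum_i f^i(x) \in \Lambda^f$. This shows that $(\Lambda^f)^\sharp$ is annihilated by $m$, i.e.\ that $\Lambda^f$ is $m$-elementary. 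The same argument with $\pi_f$ in place of $\pi^f$ handles $\Lambda_f$; alternatively, one may invoke the consequence of unimodularity recalled in \autoref{subsec:prim.emb}, namely that for a primitive sublattice $M \hookrightarrow \Lambda$ with $N = M^\perp$ the gluing form yields an anti-isometry $M^\sharp \cong N(-1)^\sharp$, so $\Lambda^f$ and $\Lambda_f$ have isomorphic underlying discriminant groups and are $m$-elementary simultaneously.

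There is no substantial obstacle here, and in particular the argument nowhere uses that $m$ is prime: averaging works for any finite order and produces the divisibility $m \cdot (\Lambda^f)^\vee \subset \Lambda^f$ in one step. The role of primality in the cited references is merely to extract finer structural information (for instance a module structure over $\IZ[\zeta_p]$), which is not needed for the $m$-elementary conclusion.
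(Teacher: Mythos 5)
Your proof is correct and is exactly the standard averaging argument that the paper itself invokes: the lemma is stated with the proof omitted, the authors noting that the usual prime-order argument (as in Boissière--Nieper-Wisskirchen--Sarti, §5.3) carries over verbatim to arbitrary $m$, which is precisely what you have written out. Both the projector computation and the surjectivity of $\Lambda^\vee \to (\Lambda^f)^\vee$ via primitivity and unimodularity are sound, so there is nothing to add.
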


The following statements are a direct consequence of \eqref{eq:(S^perp)^sharp=Xi^perp/Xi} and \cite[Lemma~3.1]{Brandhorst.Sonel.Veniani:idoneal.genera}.

\begin{lemma} \label{lem:embedding.m-elementary}
If \(M\) is an \(m\)-elementary lattice, \(L\) is an \(l\)-elementary lattice and there exists a primitive embedding \(M \hookrightarrow L\), then \(N\) is \(\lcm(m,l)\)-elementary. \qed 
\end{lemma}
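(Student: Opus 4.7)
The plan is to use the explicit identification \eqref{eq:(S^perp)^sharp=Xi^perp/Xi} from \autoref{subsec:prim.emb} to realise \(N^\sharp\) as a subquotient of a discriminant group whose exponent we already understand. Concretely, the primitive embedding \(M \hookrightarrow L\) is encoded by an embedding subgroup \(K \subset L^\sharp\) and a gluing isometry \(\xi \colon K \to K' \subset M^\sharp\); letting \(\Xi\) denote the graph of \(\xi\) inside the finite quadratic form \(L^\sharp \oplus M(-1)^\sharp\), we have the isomorphism \(N^\sharp \cong \Xi^\perp/\Xi\) of finite quadratic forms.

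First I would observe that negating the pairing on \(M\) does not alter the underlying abelian group, so \(M(-1)^\sharp\) is still \(m\)-elementary as an abelian group; combined with the hypothesis that \(L^\sharp\) is \(l\)-elementary, the direct sum \(L^\sharp \oplus M(-1)^\sharp\) is annihilated by \(\lcm(m,l)\). Next I would invoke the elementary fact that both subgroups and quotients of an \(n\)-elementary abelian group remain \(n\)-elementary (this is the content of the algebraic input from \cite[Lemma~3.1]{Brandhorst.Sonel.Veniani:idoneal.genera}). Applying this twice -- first to the subgroup \(\Xi^\perp \subset L^\sharp \oplus M(-1)^\sharp\) and then to the quotient \(\Xi^\perp/\Xi\) -- yields that \(\Xi^\perp/\Xi\) is \(\lcm(m,l)\)-elementary as an abelian group.

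Finally, transporting this through the isomorphism \(N^\sharp \cong \Xi^\perp/\Xi\), we conclude that \(N^\sharp\) is \(\lcm(m,l)\)-elementary, which by definition means that \(N\) itself is an \(\lcm(m,l)\)-elementary lattice. There is no real obstacle here: the only point requiring minor care is to distinguish between \(M(-1)^\sharp\) and \(M^\sharp\) as \emph{quadratic forms} (they differ by a sign) while noting that as \emph{abelian groups} they are literally equal, so that the \(m\)-elementary property is preserved. Everything else is formal manipulation of subgroups and quotients of a finite abelian group of known exponent.
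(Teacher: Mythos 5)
Your argument is exactly the paper's: the authors state the lemma as a direct consequence of the identification \(N^\sharp \cong \Xi^\perp/\Xi\) inside \(L^\sharp \oplus M(-1)^\sharp\) together with the fact that subquotients of an \(n\)-elementary group are \(n\)-elementary, and you have simply written out that deduction in full. The proof is correct.
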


\begin{lemma} \label{lem:Trieste}
Let \(L\) be an \(m\)-elementary even lattice. If \(m\) is odd, then \(\rank(L)\) is even. \qed
\end{lemma}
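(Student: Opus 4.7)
The plan is to derive the parity of $\rank(L)$ from the parity of $|\det(L)|$ by combining two elementary observations.

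First, I would note that if $L^\sharp$ is $m$-elementary then its order divides a power of $m$, so $|\det(L)| = |L^\sharp|$ is odd whenever $m$ is. Under the hypotheses of the lemma, $|\det(L)|$ is therefore odd. (This is essentially the bound \eqref{eq:bound.m-elementary} refined to a divisibility statement.)

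Next, I would establish the key arithmetic input: any even lattice of odd rank has even determinant. Pick a Gram matrix $A \in \mathrm{Mat}_r(\IZ)$ of $L$. Since $L$ is even, all diagonal entries of $A$ are even, so the reduction $\bar A \in \mathrm{Mat}_r(\mathbb{F}_2)$ is symmetric with zero diagonal, i.e.\ alternating over $\mathbb{F}_2$. By the structure theorem for alternating bilinear forms over a field, every alternating form splits as an orthogonal direct sum of hyperbolic planes and a radical, hence has even rank; in particular, an $r \times r$ alternating matrix with $r$ odd is singular. Therefore $\det(A) \equiv 0 \pmod{2}$, and $|\det(L)|$ is even.

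Taking the contrapositive of the two observations gives the lemma at once: if $\rank(L)$ were odd, the second step would force $|\det(L)|$ to be even, contradicting the first. There is no serious obstacle; the only mild subtlety is that the familiar one-line identity $\det(A) = \det(-A^T) = (-1)^r \det(A)$ is vacuous in characteristic two, so one must genuinely invoke the structure theorem for alternating forms rather than this symmetry trick.
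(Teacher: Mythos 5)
Your proof is correct, and it is genuinely self-contained where the paper is not: the paper disposes of this lemma by declaring it ``a direct consequence'' of the identification \eqref{eq:(S^perp)^sharp=Xi^perp/Xi} and of an external result (\cite[Lemma~3.1]{Brandhorst.Sonel.Veniani:idoneal.genera}), with no argument written out. Your two steps are both sound. Since \(L^\sharp\) is \(m\)-elementary, its exponent divides \(m\), so every prime dividing \(|L^\sharp| = |{\det(L)}|\) divides \(m\); odd \(m\) thus forces odd determinant. And the reduction of a Gram matrix of an even lattice modulo \(2\) is symmetric with zero diagonal, hence alternating over \(\mathbb{F}_2\), hence of even rank by the structure theorem for alternating forms, so an odd-rank even lattice has \(\det \equiv 0 \bmod 2\). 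Your parenthetical warning is also well taken: the usual one-line argument \(\det(A) = \det(-A^T) = (-1)^r\det(A)\) for antisymmetric matrices collapses in characteristic \(2\), so invoking the hyperbolic decomposition is genuinely necessary. What your approach buys is a short, elementary, citation-free proof; what the paper's approach buys is uniformity, since the same external lemma is used to control \(p\)-lengths of discriminant groups elsewhere (e.g.\ in \autoref{lem:embedding.m-elementary} and \autoref{lem:IC.lemma}), of which the present parity statement is a special case.
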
 

Given \(m, d \in \IN\) and \(s_+, s_- \in \IZ_{\geq 0}\) we can enumerate all \(m\)-elementary lattices \(L\) of signature \((s_+,s_-)\) and determinant \(|{\det(L)}| \leq d\) in the following way.
\begin{enumerate}
    \item We list all possible \(m\)-elementary abelian groups \(A\) of order \(|A| \leq d\).
    \item We list all possible finite quadratic forms \(q\colon A \rightarrow \IQ/2\IZ\) using Miranda and Morrison's classification~\cite{Miranda.Morrison}.
    \item \label{step:3} For each \(q\), we consider the genus \(\fg\) of signature \((s_+,s_-)\) and discriminant group~\(q\).
    \item We list all lattices in \(\fg\).
\end{enumerate}
We carried out Step~\ref{step:3} using the \texttt{sageMath} function \texttt{genera} found in \path{sage.quadratic_forms.genera.genus} and implemented by Brandhorst~\cite{sage}.
For all computations in this paper, the last step is made easier by the fact that all lattices considered are unique in their genus.

Now let us consider a primitive embedding \(L \hookrightarrow \Lambda\) with \(\Lambda\) a unimodular lattice. We write \(R \coloneqq L^\perp\) and we fix isomorphism \(L^\sharp \cong R(-1)^\sharp\) and \(\OO(L^\sharp) \cong \OO(R^\sharp)\). 
Given \(f \in \OO(\Lambda)\) with \(f(L) = L\), we can consider its restrictions \(g = f|_L\) and \(h = f|_R\). 
Then, \(g^\sharp = h^\sharp\) through the isomorphism \(\OO(L^\sharp) \cong \OO(R^\sharp)\). 
Conversely, by \cite[Corollary~1.5.2]{Nikulin:int.sym.bilinear.forms}, if there exist \(g \in \OO(L)\) and \(h \in \OO(R)\) such that \(g^\sharp\) and \(h^\sharp\) coincide through the isomorphism \(\OO(L^\sharp) \cong \OO(R^\sharp)\), then there exists \(f \in \OO(\Lambda)\) with \(f(L) = L\), \(f|_L =g\) and \(f|_R = h\). 

\begin{lemma} \label{lem:coinv.m-elementary}
Let \(L\) be a lattice. If \(g \in \OO(L)\) has order \(m\) and \(g^\sharp = \id\), then \(L_g\) is an \(m\)-elementary lattice.
\end{lemma}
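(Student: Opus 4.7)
The plan is to reduce the statement to the unimodular case already established in \autoref{lem:unimodular.m-el}. Concretely, I would embed \(L\) primitively into a unimodular lattice \(\Lambda\), extend \(g\) to an isometry \(f \in \OO(\Lambda)\) of the same order \(m\) by letting it act trivially on \(L^\perp\), and then identify the coinvariant sublattice \(\Lambda_f\) with \(L_g\). Everything needed for this is already in the preceding paragraphs of \autoref{subsec:m-elementary}.

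First, the primitive embedding \(L \hookrightarrow \Lambda\) always exists: one can take \(\Lambda\) to be the overlattice of \(L \oplus L(-1)\) obtained by adjoining the diagonal of \(L^\sharp \oplus L(-1)^\sharp\), which is unimodular and contains \(L\) primitively. Setting \(R \coloneqq L^\perp \subset \Lambda\), the standard identification \(L^\sharp \cong R(-1)^\sharp\) is fixed as in the paragraph preceding the lemma. The hypothesis \(g^\sharp = \id\) then matches the trivial action of \(\id_R\) on \(R^\sharp\); by \cite[Corollary~1.5.2]{Nikulin:int.sym.bilinear.forms}, as recalled in the same paragraph, this is exactly the compatibility needed for \((g, \id_R)\) to glue to an isometry \(f \in \OO(\Lambda)\) satisfying \(f|_L = g\) and \(f|_R = \id_R\). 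Clearly \(|f| = |g| = m\), so \autoref{lem:unimodular.m-el} applies and \(\Lambda_f\) is \(m\)-elementary.

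It remains to identify \(\Lambda_f\) with \(L_g\). The orthogonal decomposition \(\Lambda \otimes \IQ = (L \otimes \IQ) \oplus (R \otimes \IQ)\) is \(f\)-invariant, with \(f\) acting as \(g\) on the first summand and trivially on the second. Hence \((\Lambda \otimes \IQ)^f = (L^g \otimes \IQ) \oplus (R \otimes \IQ)\) and its orthogonal complement in \(\Lambda \otimes \IQ\) is precisely \(L_g \otimes \IQ\). Intersecting with \(\Lambda\) and using the primitivity of \(L\) in \(\Lambda\) together with that of \(L_g\) in \(L\), one obtains \(\Lambda_f = \Lambda \cap (L_g \otimes \IQ) = L_g\). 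This finishes the argument. No real obstacle is expected: the only point requiring a line of explanation is the bookkeeping in this last identification, while the heart of the statement is entirely absorbed by the unimodular case.
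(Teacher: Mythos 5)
Your proposal is correct and follows essentially the same route as the paper: embed \(L\) primitively into a unimodular \(\Lambda\) (the paper also uses the unimodular overlattice of \(L \oplus L(-1)\), identified with \(\rank(L)\) copies of \(\bU\)), glue \(g\) with \(\id_R\) using \(g^\sharp = \id\), and apply \autoref{lem:unimodular.m-el} after identifying \(\Lambda_f\) with \(L_g\). The only difference is that you spell out the identification \(\Lambda_f = L_g\) in more detail than the paper, which simply notes \(R_{\id} = 0\); your bookkeeping is accurate.
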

\proof
Let \(\Lambda\) be a unimodular lattice such that there exists a primitive embedding \(L \hookrightarrow \Lambda\). 
(There always exists one with \(\rank(\Lambda) \leq 2\rank(L)\), since the direct sum of \(\rank(L)\) copies of \(\bU\) is an overlattice of \(L \oplus L(-1)\).)
Let \(R = L^\perp\). Since \(g^\sharp = \id\), we can extend \(g\) to an isometry \(f \in \OO(\Lambda)\) by \(h = \id \in \OO(R)\). 
As \(R_h = 0\), it holds \(L_g \cong \Lambda_f\). Given that \(|f| = |g|\), we deduce from \autoref{lem:unimodular.m-el} that \(L_g\) is \(m\)-elementary.
\endproof

\begin{lemma} \label{lem:cond.det}
Let \(L\) be a lattice and \(g \in \OO(L)\) an isometry. Consider the restriction \(g' = g|_{L_g} \in \OO(L_g)\). If 
\[
    |{\det(L^g)}| = |{\det(L)\cdot \det(L_g)}|,
\] 
then \(g^\sharp = \id\) and \((g')^\sharp = \id\).
\end{lemma}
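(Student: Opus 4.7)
The plan is to combine the gluing description of overlattices recalled in \autoref{subsec:prim.emb} with the fact that $L^g$ and $L_g$ are mutually orthogonal primitive sublattices of $L$, so that $L^g \hookrightarrow L$ is a primitive embedding with orthogonal complement $N = L_g$.

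First I would apply the determinant formula \eqref{eq:gluing.embedding.index} to this primitive embedding, which reads
\[
    h^2 \cdot |{\det(L)}| = |{\det(L^g)} \cdot {\det(L_g)}|,
\]
where $h$ denotes the gluing index. Substituting the hypothesis $|{\det(L^g)}| = |{\det(L)} \cdot {\det(L_g)}|$ gives $h^2 = |{\det(L_g)}|^2$, hence $h = |{\det(L_g)}| = |L_g^\sharp|$. Since $h$ equals the order of the gluing subgroup $H' \subset L_g^\sharp$, this forces $H' = L_g^\sharp$, i.e., the entire discriminant of $L_g$ is used in the gluing.

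The second step is to exploit the fact that the isometry $g$ acts as the identity on $L^g$ and as $g'$ on $L_g$, so on the overlattice $(L^g)^\vee \oplus L_g^\vee$ it acts as $\id \oplus g'$, inducing the action $\id \oplus (g')^\sharp$ on $(L^g)^\sharp \oplus L_g(-1)^\sharp$. Since $g(L) = L$, this action must preserve the gluing graph $\Gamma = \{(x, \gamma(x)) : x \in H\}$. Applying $\id \oplus (g')^\sharp$ to a generic element of $\Gamma$ and comparing first coordinates shows that the condition $\Gamma$-invariance is equivalent to $(g')^\sharp$ fixing $\gamma(H) = H'$ pointwise. Since $H' = L_g^\sharp$ by the first step, this gives $(g')^\sharp = \id$.

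Finally, using the identification $L^\sharp \cong \Gamma^\perp/\Gamma$ from \eqref{eq:L^sharp=Gamma^perp/Gamma}, the induced action of $g$ on $L^\sharp$ is the restriction of $\id \oplus (g')^\sharp = \id \oplus \id$, so $g^\sharp = \id$. I do not foresee any substantial obstacle in this argument, as the proof is essentially bookkeeping with the gluing data. The only point that deserves some care is the verification that $\Gamma$-invariance under $\id \oplus (g')^\sharp$ is equivalent to $(g')^\sharp$ acting as the identity (not just setwise preserving) on $H'$; this is immediate from the graph description of $\Gamma$.
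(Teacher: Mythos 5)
Your proof is correct and follows essentially the same route as the paper: both arguments use the determinant identity \eqref{eq:gluing.embedding.index} to show that the hypothesis forces the gluing subgroup of \(L_g \hookrightarrow L\) to be all of \(L_g^\sharp\), and then conclude by the equivariance of the gluing data together with the fact that \(g\) restricts to the identity on \(L^g\). The only (cosmetic) difference is that the paper packages the final step via the identification \((L^g)^\sharp \cong L^\sharp \oplus L_g^\sharp\), whereas you work directly with the graph \(\Gamma\) and the identification \(L^\sharp \cong \Gamma^\perp/\Gamma\).
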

\proof
Indeed, by \eqref{eq:gluing.embedding.index} the equation holds if and only if the gluing subgroup of \(L_g \hookrightarrow L\) is the whole group \(L_g^\sharp\). Hence, \((L^g)^\sharp = L^\sharp \oplus L_g^\sharp\).
Obviously, the restriction of \(g\) to \(L^g\) is the identity, so its image in \(\OO((L^g)^\sharp)\) is trivial. Therefore, also \(g^\sharp \) and \((g')^\sharp\) must be trivial.
\endproof

The discriminant group \(L^\sharp\) of a lattice \(L\) is said have \emph{parity} \(\delta = 0\) if no element of order~\(2\) in \(L^\sharp\) has square \(\pm 1/2 \mod 2\IZ\) (cf. \cite[p.~144]{Nikulin:int.sym.bilinear.forms}). 
Otherwise, one says that \(L^\sharp\) has parity \(\delta = 1\).

The following theorem, which we will use in~\autoref{subsec:prime.order} to deal with isometries of order \(2\), is easily derived from Nikulin's classification of \(2\)-elementary lattices. 
An analogous theorem for groups of odd prime order \(p\) has been proven by Brandhorst and Cattaneo~\cite{Brandhorst.Cattaneo:prime.order.unimodular}. 

\begin{theorem} \label{thm:involutions.unimodular}
Let \(l_+,l_-,t_+, t_-, a \in \IZ_{\geq 0},\delta \in \set{0,1}\) such that \(l_+ - l_- \equiv 0 \mod 8\). There exist a unimodular even lattice \(\Lambda\) of signature \((l_+,l_-)\) and an involution \(f \in \OO(\Lambda)\) with coinvariant lattice \(\Lambda_f\) of signature~\((t_+,t_-)\), and discriminant group \(\Lambda_f^\sharp\) of length~\(a\) and parity~\(\delta\) if and only if the following conditions hold:
\begin{enumerate}[(1)]
    \item \(t_+ \leq l_+\) and \(t_- \leq l_-\);
    \item \(a \leq \min(t_+ + t_-, l_+ + l_- - t_+ -t_-)\);
    \item \(t_+ + t_- + a \equiv 0 \mod 2\);
    \item if \(\delta = 0\), then \(t_+ - t_- \equiv 0 \mod 4\);
    \item if \(a = 0\), then \(\delta = 0\) and \(t_+ - t_- \equiv 0 \mod 8\);
    \item if \(a = 1\), then \(t_+ - t_- \equiv \pm 1 \mod 8\);
    \item if \(a = 2\) and \(t_+ - t_- \equiv 4 \mod 8\), then \(\delta = 0\);
    \item if \(\delta = 0\), and \(a = t_+ + t_-\) or \(a = l_+ + l_- - t_+ - t_-\), then \(t_+ - t_- \equiv 0 \mod 8\).
\end{enumerate}
\end{theorem}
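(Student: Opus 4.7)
The plan is to reduce the statement to Nikulin's classification of \(2\)-elementary even lattices \cite[Theorem~3.6.2]{Nikulin:int.sym.bilinear.forms}, combined with the gluing machinery of \autoref{subsec:prim.emb}.

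For necessity, suppose \((\Lambda, f)\) exists. By \autoref{lem:unimodular.m-el}, both \(\Lambda_f\) and \(\Lambda^f\) are \(2\)-elementary. Since \(\Lambda\) is unimodular, the primitive embedding \(\Lambda_f \hookrightarrow \Lambda\) identifies \((\Lambda^f)^\sharp\) with \(\Lambda_f^\sharp(-1)\) as finite quadratic forms. Multiplication by \(-1\) preserves both the length and the parity of a \(2\)-elementary form, so \(\Lambda^f\) and \(\Lambda_f\) share the same pair \((a, \delta)\). Condition (1) is immediate from \(\Lambda_f \subset \Lambda\), and (2) follows since \(a\) cannot exceed \(\rank(\Lambda_f) = t_+ + t_-\) nor \(\rank(\Lambda^f) = l_+ + l_- - t_+ - t_-\). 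Conditions (3)--(7) are then precisely Nikulin's existence criterion applied to the \(2\)-elementary lattice \(\Lambda_f\) with invariants \((t_+, t_-, a, \delta)\). Condition (8) corresponds to the borderline case in which the discriminant length equals the rank: applying this case of Nikulin's criterion to \(\Lambda_f\) when \(a = t_+ + t_-\), or to \(\Lambda^f\) when \(a = l_+ + l_- - t_+ - t_-\), and using \(l_+ - l_- \equiv 0 \pmod 8\), one sees that both resulting constraints reduce to \(t_+ - t_- \equiv 0 \pmod 8\).

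For sufficiency, assume the eight conditions hold. A routine check using \(l_+ - l_- \equiv 0 \pmod 8\) shows that the same conditions hold for the complementary tuple \((l_+ - t_+, l_- - t_-, a, \delta)\). By Nikulin's theorem there exist \(2\)-elementary even lattices \(K\) and \(K'\) with these respective invariants. The signatures mod \(8\) of the finite quadratic forms \(K^\sharp\) and \(K'^\sharp(-1)\) agree, again because \(l_+ - l_- \equiv 0 \pmod 8\), so Nikulin's classification of finite quadratic forms supplies an isometry \(\gamma : K^\sharp \to K'^\sharp(-1)\). Gluing \(K\) and \(K'\) along \(\gamma\) via the graph construction of \autoref{subsec:prim.emb} yields an even lattice \(\Lambda\) of signature \((l_+, l_-)\) whose discriminant group is trivial by \eqref{eq:L^sharp=Gamma^perp/Gamma}; hence \(\Lambda\) is unimodular. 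The involution \(f = -\id_K \oplus \id_{K'}\) acts as the identity on the discriminants of both summands and therefore preserves the gluing, so it extends to an involution of \(\Lambda\) with \(\Lambda_f = K\), realising the prescribed invariants.

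The main technical subtlety is the symmetric bookkeeping between \(\Lambda_f\) and \(\Lambda^f\): Nikulin's existence conditions must be verified on both summands, and their discriminant forms, classified by length, parity, and a further signature-mod-\(8\) invariant, must be compatible under \((-1)\). Both requirements are governed by the assumption \(l_+ - l_- \equiv 0 \pmod 8\), and condition (8) encodes precisely the case where this duality has nontrivial divisibility content.
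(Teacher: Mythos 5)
Your proposal is correct and follows essentially the same route as the paper: both directions reduce to Nikulin's classification of \(2\)-elementary even lattices \cite[Theorem~3.6.2]{Nikulin:int.sym.bilinear.forms}, with the converse obtained by gluing the two \(2\)-elementary lattices along their discriminant forms and extending \(\pm\id\). Your write-up is in fact slightly more careful than the paper's in tracking the \((-1)\)-twist between \((\Lambda^f)^\sharp\) and \(\Lambda_f^\sharp\) and in explaining how conditions (2) and (8) arise from applying Nikulin's criterion to both the invariant and the coinvariant lattice.
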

\proof 
If such an involution \(f \in \OO(\Lambda)\) exists, then it holds \(\sign(\Lambda_f) = (l_+ - t_+, l_- - t_-)\) and \((\Lambda^f)^\sharp \cong (\Lambda_f)^\sharp\). 
Nikulin's \cite[Theorem 3.6.2]{Nikulin:int.sym.bilinear.forms} then implies that the conditions listed hold. 

Conversely, if the conditions hold, then by the same theorem there exist \(2\)-elementary even lattices \(M,N\) with \(\sign(M) = (l_+ - t_+, l_- - t_-), \sign(N) = (t_+,t_-)\) and \(M^\sharp \cong N(-1)^\sharp\) of length \(a\) and parity \(\delta\).
Therefore, \(M \oplus N\) admits a primitive overlattice \(\Lambda\). 
Moreover, \(\id\) on \(M\) and \(-\id\) on \(N\) glue well to an involution \(f \in \OO(\Lambda)\) with \(M \cong \Lambda^f\) and \(N \cong \Lambda_f\). 
\endproof 

\subsection{Hodge structures} \label{subsec:Hodge.structures}

A \emph{Hodge structure of weight \(2\)} on a lattice \(L\) is the choice of three complex subspaces \(L^{2,0},L^{1,1},L^{0,2}\) of \(L \otimes \IC\) with \(L \otimes \IC = L^{2,0} \oplus L^{1,1} \oplus L^{0,2}\), \(\overline{L^{2,0}} = L^{0,2}\), \(\overline{L^{1,1}} = L^{1,1}\), and such that
\begin{align*} 
    v \cdot \overline v &> 0 \text{ for } v \in L^{2,0}, v \neq 0, \\
    v \cdot w &= 0 \text{ for } v,w \in L^{2,0}, \\
    v \cdot w &= 0 \text{ for } v \in L^{2,0} \oplus L^{0,2}, w \in L^{1,1}.
\end{align*}

If \(\dim_\IC L^{2,0} = 1\), the Hodge structure is said to be \emph{of \(\K3\) type}.
In this paper we will only consider Hodge structures of \(\K3\) type, which for the sake of simplicity we will just call Hodge structures.

An isometry of a lattice \(L\) with a Hodge structure is a \emph{Hodge isometry} if its linear extension to \(\IC\) preserves the Hodge structure. A Hodge isometry of \(L\) is a \emph{symplectic isometry} if its linear extension restricts to the identity on \(L^{2,0}\).
The subgroups of Hodge and symplectic isometries are denoted \(\OO_{\hodge}(L)\) and \(\OO_{\symplectic}(L)\), respectively. 

A \emph{signed Hodge structure} on \(L\) is a Hodge structure such that the restriction of the bilinear form to \(L^{1,1} \cap (L \otimes \IR)\) has signature \((1,\dim L^{1,1}-1)\), together with the choice of one connected component, called \emph{positive cone} and denoted \(\sP\), of the subset
\[
    \Set{v \in L^{1,1} \cap (L \otimes \IR)}{v^2 > 0}.
\]
The real subspace \((L^{2,0} \oplus L^{0,2}) \cap (L \otimes \IR)\) is always positive definite,
so only lattices of signature \((3,\rank L - 3)\) can admit a signed Hodge structure of weight \(2\). 

Given a cone \(C \subset L \otimes \IR\), we set
\[
    \OO(L, C) \coloneqq \Set{g \in \OO(L)}{g(C) = C}.
\]
Note that \(\OO^+(L) = \OO(L, \sP)\) if \(L\) is of signature \((3, \rank L -3)\).

\begin{lemma} \label{lem:convex.cone}
Let \(L\) be a lattice. If \(C \subset L \otimes \IR\) is a convex cone, and \(G \subset \OO(L, C)\) is a finite subgroup, then
\[
    (L \otimes \IR)^G \cap C \neq \emptyset.
\]
\end{lemma}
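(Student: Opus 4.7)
The plan is to use the standard averaging argument for finite group actions. Pick any vector $v \in C$; the claim is that its orbit average
\[
    v_G \coloneqq \frac{1}{|G|} \sum_{g \in G} g(v)
\]
lies in $(L \otimes \IR)^G \cap C$.

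First, I would check that $v_G$ is $G$-invariant. For any $h \in G$, applying $h$ permutes the summands (since $g \mapsto hg$ is a bijection of $G$), so $h(v_G) = v_G$. This gives $v_G \in (L \otimes \IR)^G$.

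Next, I would check that $v_G \in C$. By hypothesis $G \subset \OO(L, C)$, so $g(v) \in C$ for every $g \in G$. The vector $v_G$ is therefore a convex combination of points of $C$ (all coefficients $1/|G|$ are positive and sum to $1$), and the convexity of $C$ forces $v_G \in C$.

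There is no real obstacle here beyond observing that the hypothesis $G(C) = C$ yields $g(v) \in C$ for each $g$, and that convexity is exactly what makes convex combinations stay in $C$. (Note that we do not even need the cone property of $C$ for this argument, only convexity and $G$-invariance.) Hence $v_G \in (L \otimes \IR)^G \cap C$, proving the lemma.
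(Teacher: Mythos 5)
Your proposal is correct and is essentially the paper's own argument: the paper averages without the $1/|G|$ normalization (using that a convex cone is closed under sums), while you normalize and use convexity directly. The two are interchangeable here.
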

\proof
Take \(\omega \in C\) and define \(\omega_G \coloneqq \sum_{g \in G} g(\omega)\). Clearly, \(\omega_G \in (L \otimes \IR)^G\), but also \(\omega_G \in C\), as \(g(\omega) \in C\) for every \(g \in G\) and \(C\) is a convex cone.
\endproof 

Given a lattice \(L\) with a signed Hodge structure, we define \(\OO^+_\symplectic(L) \coloneqq \OO^+(L) \cap \OO_\symplectic(L).\)

\begin{lemma} \label{lem:coinvariant.negative.definite}
Let \(L\) be a lattice of signature \((3,r-3)\), with \(r \geq 3\), and \(G \subset \OO(L)\) a finite subgroup. 
If there exists a signed Hodge structure such that \(G \subset \OO^+_{\symplectic}(L)\), then \(L_G\) is negative definite.
\end{lemma}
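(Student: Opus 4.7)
The plan is to exhibit a $3$-dimensional positive definite subspace of $L^G \otimes \IR$. Since $L$ has signature $(3, r-3)$, this forces the orthogonal complement $L_G \otimes \IR$ to contain no positive direction, hence to be negative definite.

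The first two positive directions come for free from the symplectic hypothesis. Each $g \in G$ acts trivially on $L^{2,0}$ by definition of symplecticity, and therefore also on $L^{0,2} = \overline{L^{2,0}}$ by complex conjugation. Consequently $G$ acts trivially on the real subspace
\[
    V \coloneqq (L^{2,0} \oplus L^{0,2}) \cap (L \otimes \IR),
\]
which, as noted in \autoref{subsec:Hodge.structures}, is positive definite of real dimension~$2$ (since the Hodge structure is of $\K3$ type).

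The third positive direction is produced from the signed Hodge structure together with \autoref{lem:convex.cone}. Since $G \subset \OO_\hodge(L)$, it preserves $L^{1,1}$, and since $G \subset \OO^+(L)$ with $L$ of signature $(3, r-3)$, the remark preceding \autoref{lem:convex.cone} gives $G \subset \OO(L, \sP)$. Applying \autoref{lem:convex.cone} to the convex cone $\sP$ yields a vector $\omega \in (L \otimes \IR)^G \cap \sP$. By the orthogonality relations of a Hodge structure, $\omega \in L^{1,1}$ is orthogonal to $V \subset L^{2,0} \oplus L^{0,2}$, and $\omega^2 > 0$. Hence $V \oplus \IR\omega$ is a positive definite subspace of $(L \otimes \IR)^G = L^G \otimes \IR$ of dimension~$3$, which completes the argument.

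There is no real obstacle: the lemma is essentially a bookkeeping statement that extracts the full consequence of the symplectic and positivity conditions. The only subtlety is remembering that the signed Hodge structure provides two ingredients, a pointwise fixed positive plane (from symplecticity) and one extra positive averaged class in $\sP$ (from preservation of the positive cone), and that together they account for the full positive part of the signature.
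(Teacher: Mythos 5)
Your argument is correct and coincides with the paper's own proof: both exhibit the positive definite plane $(L^{2,0}\oplus L^{0,2})\cap(L\otimes\IR)$ inside the invariant subspace and then use \autoref{lem:convex.cone} to find a $G$-invariant class in $\sP$, accounting for all three positive directions of the signature. No further comment is needed.
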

\proof 
The invariant real subspace \((L \otimes \IR)^G\) contains the \(2\)-dimensional real subspace \((L^{2,0} \oplus L^{0,2}) \cap (L \otimes \IR)\), which is positive definite.
Moreover, \((L \otimes \IR)^G\) intersects the convex cone \(\sP \subset L^{1,1} \cap (L \otimes \IR)\) by \autoref{lem:convex.cone}. 
Therefore, \(\sign L^G = (3, \rank L^G - 3)\), and \(L_G\) is consequently negative definite.
\endproof

\begin{lemma} \label{lem:L^1,1=L_G}
Let \(L\) be a lattice of signature \((3,r-3)\), with \(r \geq 3\), and \(G \subset \OO(L)\) a finite subgroup. 
If \(L_G\) is negative definite, then there exists a signed Hodge structure on \(L\) such that \(L^{1,1} \cap L = L_G\) and \(G \subset \OO^+_\symplectic(L)\).
\end{lemma}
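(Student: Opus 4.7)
The plan is to define the Hodge structure by choosing its period inside the positive part of \(L^G\). Since \(L_G\) is negative definite and \(L^G = L_G^\perp\), the signature of \(L^G\) equals \((3,\rank(L^G)-3)\), so the Grassmannian of positive definite 2-planes in \(L^G \otimes \IR\) is a nonempty open subset. I will pick such a 2-plane \(P\) and set \(L^{2,0} = \IC(v_1 + iv_2)\) for orthogonal vectors \(v_1, v_2 \in P\) with \(v_1^2 = v_2^2 > 0\); this makes \(L^{2,0}\) isotropic of positive norm and yields \(L^{2,0} \oplus L^{0,2} = P \otimes_\IR \IC\) and \(L^{1,1} = P^\perp \otimes_\IR \IC\) inside \(L \otimes \IC\). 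Because \(G\) acts trivially on \(L^G \supset P\), it fixes \(L^{2,0}\) pointwise, so \(G\) preserves the Hodge decomposition and \(G \subset \OO_\symplectic(L)\).

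The essential step is to arrange \(L^{1,1} \cap L = L_G\). The inclusion \(L_G \subset P^\perp \cap L = L^{1,1} \cap L\) is automatic from \(L_G \perp L^G \supset P\). For the reverse inclusion I will use the orthogonal projection \(\pi \colon L \otimes \IQ \to L^G \otimes \IQ\). Since \(L^G \subset \pi(L) \subset (L^G)^\vee\), the image \(\pi(L)\) is a lattice in \(L^G \otimes \IR\). A vector \(v \in L\) lies in \(P^\perp\) if and only if \(\pi(v) \in P^\perp \cap L^G \otimes \IR\), and by primitivity of \(L_G\) the condition \(\pi(v) = 0\) characterizes \(v \in L_G\). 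Hence it suffices to choose \(P\) so that \(P^\perp \cap L^G \otimes \IR\) meets \(\pi(L)\) only at the origin. For each nonzero \(a \in \pi(L)\), the locus of 2-planes orthogonal to \(a\) is a proper (codimension~2) subvariety of the Grassmannian; as \(\pi(L)\) is countable, the complement of their union is dense and meets the open set of positive definite 2-planes, producing the desired \(P\).

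To conclude, the subspace \(L^{1,1} \cap L \otimes \IR = P^\perp\) has signature \((1, r-3)\), so I designate one of its two connected components of positive vectors as the positive cone \(\sP\), obtaining a signed Hodge structure. Lemma \ref{lem:spin} then gives \(G \subset \OO^+(L) = \OO(L, \sP)\) since \(L_G\) is negative definite, and combined with the symplectic property established above this yields \(G \subset \OO^+_\symplectic(L)\).

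The main obstacle is the Baire-type argument producing the 2-plane \(P\) with the prescribed intersection behaviour, without which \(L^{1,1} \cap L\) could be strictly larger than \(L_G\). All other assertions (the Hodge axioms, the symplectic and Hodge conditions on \(G\), and membership in \(\OO^+(L)\)) follow formally from the construction together with Lemma \ref{lem:spin}.
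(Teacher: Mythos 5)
Your proof is correct and follows essentially the same route as the paper: the paper chooses a generic $G$-invariant period $\sigma \in (L\otimes\IC)^G$ with $\sigma^2=0$, $\sigma\cdot\overline{\sigma}>0$ outside a countable union of hyperplanes, which is exactly your generic positive-definite $2$-plane $P\subset L^G\otimes\IR$ under the usual period/$2$-plane correspondence, and then concludes with \autoref{lem:spin} just as you do. Your Grassmannian/projection argument merely spells out the genericity step that the paper leaves implicit.
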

\proof[Proof]
Since \(L_G\) is negative definite, \(L^G \otimes \IR\) contains a positive definite subspace of maximal dimension~\(3\).
This implies that 
\[ 
    \dim_\IC (L\otimes \IC)^G = \dim_\IR (L \otimes \IR)^G \geq \dim_\IR (L^G \otimes \IR) \geq 3.
\]
Therefore, we can find \(\sigma \in (L \otimes \IC)^G\) with \(\sigma^2 = 0\), \(\sigma \cdot \overline \sigma > 0\). 
Choosing \(\sigma\) outside a certain countable union of hyperplanes, we can suppose that
\[
    \sigma^\perp \cap L = L_G.
\]
In other words, putting \(L^{2,0} = \IC\sigma, L^{0,2} = \overline{L^{2,0}}\) and \(L^{1,1} = (L^{2,0} \oplus L^{0,2})^\perp \subset L \otimes \IC\) we define a signed Hodge structure on \(L\) (choosing \(\sP\) arbitrarily) such that \(L^{1,1} \cap L = L_G\). In this way, \(G \subset \OO_{\symplectic}(L)\) by construction.
By \autoref{lem:spin}, it also holds \(G \subset \OO^+(L)\), therefore \(G \subset \OO^+_{\symplectic}(L)\).
\endproof

Fix a signed Hodge structure on \(L\). Given a subset \(\cW \subset L\) of primitive vectors which is invariant under \(\OO(L)\), a connected component of
\begin{equation} \label{eq:exc.chamb}
    \sP \setminus \bigcup_{w \in \cW \cap L^{1,1}} w^\perp 
\end{equation}
is called a \emph{chamber defined by \(\cW\)}. Note that every chamber is a convex cone and that \(\OO^+_\hodge(L)\) acts on the set of chambers. 
We put \(\OO^+_\symplectic(L, C) \coloneqq \OO^+_\symplectic(L) \cap \OO(L, C)\).

\begin{lemma} \label{lem:no.walls.in.coinv}
Let \(L\) be a lattice with a signed Hodge structure. If \(C\) is a chamber defined by a subset \(\cW \subset L\) and \(G \subset \OO^+_{\symplectic}(L, C)\) is a finite subgroup, then 
\[ L_G \cap \cW = \emptyset.\] 
\end{lemma}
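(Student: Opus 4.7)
The plan is to suppose for contradiction that there exists \(w \in L_G \cap \cW\) and derive a contradiction by pairing the \(G\)-translates of \(w\) with a fixed vector \(v \in C\). The argument has three steps: (i) promote \(w\) to an element of \(\cW \cap L^{1,1}\), so that \(w^\perp\) actually cuts out a wall of the chamber structure; (ii) show that all numbers \(v \cdot g(w)\) have the same nonzero sign; (iii) observe that the sum \(\sum_{g \in G} g(w)\) nevertheless vanishes.

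For step (i), since \(G \subset \OO_{\symplectic}(L)\) acts trivially on \(L^{2,0}\) and on its conjugate \(L^{0,2}\), the positive-definite real plane \((L^{2,0} \oplus L^{0,2}) \cap (L \otimes \IR)\) is contained in \((L \otimes \IR)^G\). Taking orthogonals forces
\[
    L_G \otimes \IR \subset L^{1,1} \cap (L \otimes \IR),
\]
so in particular \(w \in \cW \cap L^{1,1}\). For step (ii), fix any \(v \in C\). By definition of chamber, \(v \cdot w \neq 0\) and the sign of \(v' \cdot w\) is constant as \(v'\) varies in \(C\). For every \(g \in G\), the set \(\cW\) is \(\OO(L)\)-invariant and \(L^{1,1}\) is preserved by the Hodge isometry \(g\), so \(g(w) \in \cW \cap L^{1,1}\). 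Since \(g \in \OO(L, C)\) the vector \(g^{-1}(v)\) also lies in \(C\), and hence
\[
    v \cdot g(w) = g^{-1}(v) \cdot w
\]
is nonzero with the same sign as \(v \cdot w\).

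For step (iii), the vector \(\sum_{g \in G} g(w)\) is \(G\)-invariant, yet it also lies in the \(G\)-stable sublattice \(L_G\); hence it belongs to \(L^G \cap L_G = 0\). Pairing with \(v\) therefore equates \(0\) with a sum of \(|G|\) nonzero reals of a common sign, a contradiction. I do not foresee any serious obstacle: the only mildly delicate point is the inclusion \(L_G \otimes \IR \subset L^{1,1}\) in step~(i), without which \(w^\perp\) need not appear among the walls defining~\(C\), but this is an immediate consequence of the symplecticity assumption on \(G\).
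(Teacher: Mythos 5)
Your proof is correct. It differs from the paper's in where the averaging happens: the paper invokes \autoref{lem:convex.cone} to average a vector \(v \in C\) over \(G\), producing a \(G\)-invariant element of \(C\) (here convexity of the chamber is used), and then simply notes that every \(w \in L_G\) is orthogonal to this invariant vector, hence cannot lie in \(\cW\) (implicitly: cannot lie in \(\cW \cap L^{1,1}\), whose orthogonal hyperplanes miss \(C\)). You instead average the putative wall vector \(w\) over \(G\): the sum lands in \(L^G \cap L_G = 0\), while pairing each translate \(g(w)\) against a fixed \(v \in C\) gives numbers of constant nonzero sign because \(g^{-1}(v) \in C\) and \(C\) is connected and disjoint from \(w^\perp\). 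The two arguments are mirror images of the same orthogonal-decomposition idea; yours needs only connectedness of \(C\) rather than convexity, and it makes explicit a point the paper's two-line proof leaves tacit, namely that symplecticity forces \(L_G \otimes \IR \subset L^{1,1} \cap (L \otimes \IR)\), so that \(w^\perp\) genuinely occurs among the hyperplanes removed in \eqref{eq:exc.chamb}. Both proofs are complete; the paper's is shorter because the averaging step is outsourced to \autoref{lem:convex.cone}, which is reused elsewhere (e.g.\ in \autoref{lem:coinvariant.negative.definite}).
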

\begin{proof}
By \autoref{lem:convex.cone}, there exists \(v \in (L \otimes \IR)^G \cap C\). Each \(w \in L_G\) is orthogonal to \(v\), hence \(w \notin \cW\).
\end{proof}

\subsection{Torelli theorem} \label{subsec:OG6}

An \emph{irreducible holomorphic symplectic} manifold is a simply connected compact Kähler manifold \(X\) such that \(\HH^0(X, \Omega^2_X)\) is generated by a nowhere degenerate holomorphic \(2\)-form \(\sigma\).
An automorphism or a birational transformation of \(X\) is called \emph{symplectic} if its pullback acts trivially on \(\sigma\). The groups of automorphisms and birational transformations of \(X\) are denoted \(\Aut(X)\) and \(\Bir(X)\), respectively.

Let \(X\) be an irreducible holomorphic symplectic manifolds of type \(\OG6\). A \emph{marking} is the choice of an isometry \(\eta\colon \HH^2(X,\IZ) \rightarrow \bL \coloneqq 3\bU \oplus 2[-2]\). Given a \emph{marked pair} \((X,\eta)\) of type \(\OG6\), we consider the associated \emph{representation map}
\[
    \eta_*\colon \Bir(X) \rightarrow \OO(\bL), \quad f \mapsto \eta \circ (f^{-1})^*\circ \eta^{-1}.
\]

We say that an isometry \(g \in \OO(\bL)\) is \emph{induced} by an automorphism (resp. birational transformation) if there exists a marked pair \((X,\eta)\) of type \(\OG6\) and an element \(f \in \Aut(X)\) (resp. \(f \in \Bir(X)\)) such that \(\eta_*(f) = g\).

We define
\begin{align*}
    \cW^\pex_{\OG6} & \coloneqq \Set{w \in \bL}{w^2 = -2, (w, \bL) = 2} \cup \Set{w \in \bL}{w^2 = -4, (w, \bL) = 2} \\
    \cW_{\OG6} & \coloneqq \cW_{{\OG6}}^{\pex} \cup \Set{w \in \bL}{w^2 = -2, (w, \bL) = 1}.
\end{align*}
Given a signed Hodge structure on \(\bL\), a chamber defined by \(\cW^\pex_{\OG6}\) is called an \emph{exceptional} chamber, whereas a chamber defined by \(\cW_{\OG6}\) is called a \emph{Kähler} chamber.


The Torelli theorem for irreducible holomorphic symplectic manifolds was first proven by Verbitsky~\cite{Verbitsky:Torelli,Verbitsky:Torelli.errata} (cf. also Huybrechts~\cite{Huybrechts:basic.results,Huybrechts:global.Torelli} and Markman~\cite{Markman:survey}). 
We state here a version which is convenient for our purposes.


\begin{theorem}[Torelli theorem for manifolds of type \(\OG6\)] \label{thm:torelli.OG6}
A subgroup \(G \subset \OO(\bL)\) is induced by a group of symplectic  automorphisms (resp. symplectic birational transformations) on a manifold of type~\(\OG6\) if and only if there exist a signed Hodge structure on \(\bL\) and a Kähler (resp. exceptional) chamber \(C\) such that
\[
\pushQED{\qed} 
    G \subset \OO^+_\symplectic(\bL, C). \qedhere
\popQED
\]
\end{theorem}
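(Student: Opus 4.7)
The plan is to deduce this statement from the general Hodge-theoretic Torelli theorem of Verbitsky and Markman by specialising to the $\OG6$ deformation type. The two ingredients that make the specialisation possible are (i) the identification of the monodromy group $\operatorname{Mon}^2(\OG6)$ as the whole group $\OO^+(\bL)$, and (ii) the description of the walls of the Kähler cone (respectively the birational Kähler cone) inside the positive cone, which for manifolds of type $\OG6$ are cut out exactly by the classes in $\cW_{\OG6}$ (respectively $\cW^\pex_{\OG6}$). Both facts are by now standard for $\OG6$ and available in the cited literature (in particular work of Mongardi--Rapagnetta and follow-ups).

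For the ``only if'' direction, fix a marked pair $(X,\eta)$ of type $\OG6$ and a group $G$ of symplectic birational transformations. Each $f \in G$ pulls back to a Hodge isometry of $\HH^2(X,\IZ)$, symplectic by hypothesis; moreover $\eta_*(f)$ preserves the orientation of a maximal positive definite subspace, since pullbacks of birational maps between irreducible holomorphic symplectic manifolds are always orientation-preserving monodromy operators. Thus $\eta_*(G) \subset \OO^+_\symplectic(\bL)$. The image $\eta(\cK_X)$ of the Kähler cone of $X$ is contained in a chamber defined by $\cW_{\OG6}$, which is $\eta_*(G)$-stable when $G \subset \Aut(X)$; in the general birational case one instead considers the (larger) birational Kähler chamber defined by $\cW^\pex_{\OG6}$ containing $\eta(\cK_X)$, which is always $\eta_*(G)$-stable.

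For the ``if'' direction, start from a signed Hodge structure on $\bL$ and a chamber $C$ of the prescribed type with $G \subset \OO^+_\symplectic(\bL, C)$. The surjectivity of the period map produces a marked pair $(X,\eta)$ of type $\OG6$ realising this Hodge structure. A small perturbation inside the $G$-invariant locus of the period domain (possible because $\bL_G$ is negative definite by \autoref{lem:coinvariant.negative.definite}) allows us to arrange that, after transport by $\eta$, the Kähler chamber of $X$ equals $C$ in the Kähler case, or that its birational Kähler chamber equals $C$ in the exceptional case. The Hodge-theoretic Torelli theorem then lifts each $g \in G$ to some $f_g \in \Bir(X)$, which is biregular exactly when $C$ is a Kähler chamber; symplecticity of $f_g$ follows from $g$ fixing $\bL^{2,0}$.

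The main obstacle is this last perturbation and matching step: one must ensure that the prescribed chamber $C$ is not merely an abstract chamber in the positive cone but arises as the actual (birational) Kähler chamber of some manifold in the moduli space. This reduction rests crucially on the fact that the wall sets $\cW_{\OG6}$ and $\cW^\pex_{\OG6}$ are \emph{exactly} the ones defining the Kähler and movable cones for $\OG6$, a nontrivial input that one borrows from the cited literature rather than re-proving here.
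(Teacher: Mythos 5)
Your proposal is correct and follows essentially the same route as the paper: both directions rest on the same inputs, namely Mongardi--Rapagnetta's computation of the monodromy group as \(\OO^+(\bL)\) and their description of the Kähler-cone walls, Huybrechts' surjectivity of the period map, Markman's Hodge-theoretic Torelli theorem, and the fundamental exceptional chamber for the birational case. The only cosmetic difference is that where you invoke a ``small perturbation in the period domain'' to realise \(C\) as the (birational) Kähler cone, the paper simply chooses the appropriate marked pair in the fibre of the period map over the given period point; the substance is the same.
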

\proof
Suppose that \(G\) is induced by a group of symplectic automorphisms on a manifold \(X\) of type \(\OG6\). It follows that \(G\) is contained in the monodromy group of \(X\), which is \(\OO^+(\bL)\) by a result of Mongardi and Rapagnetta~\cite[Theorem~5.5]{Mongardi.Rapagnetta:OG6.monodromy.birational.geometry}. 
The Hodge decomposition and the canonical choice of the positive cone induce a signed Hodge structure on \(\bL\). All automorphisms of \(X\) preserve the Kähler cone, which is a chamber~\(C\) cut out by vectors in \(\cW_{\OG6}\) by \cite[Theorem~1.2]{Mongardi.Rapagnetta:OG6.monodromy.birational.geometry}, i.e. a Kähler chamber. Therefore, \(G \subset \OO^+_\symplectic(\bL, C)\).

Conversely, suppose there exist a signed Hodge structure on \(\bL\) and a Kähler chamber \(C\) such that \(G \subset \OO^+_\symplectic(\bL, C)\).
Then, by Huybrechts' theorem on the surjectivity of the period map~\cite[Theorem 8.1]{Huybrechts:basic.results} there exists a manifold \(X\) of type \(\OG6\) whose Hodge decomposition induces the given signed Hodge structure. Moreover, we can assume that \(C\) is the Kähler cone of \(X\). As \(G\) consists of monodromy transformations and respects the Kähler cone, it is induced by automorphisms of \(X\) by Markman's Hodge theoretic version of the Torelli theorem \cite[Theorem 1.3]{Markman:survey}. Obviously, the automorphisms must be symplectic.

The same proof applies for birational transformations considering the fundamental  exceptional chamber of \(X\) (\cite[Definition~5.2]{Markman:survey}), which is the distinguished exceptional chamber containing a K\"ahler class, instead of the Kähler cone, by \cite[Corollary~5.7]{Markman:survey}.
\endproof

\begin{theorem} \label{thm:automorphisms}
A finite subgroup \(G \subset \OO(\bL)\) is induced by a group of symplectic automorphisms on a manifold of type \(\OG6\) if and only if \(\bL_G\) is negative definite and 
\begin{equation} \label{eq:thm:automorphisms}
    \bL_G \cap \cW_{\OG6} = \emptyset.
\end{equation}
\end{theorem}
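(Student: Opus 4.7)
The plan is to deduce \autoref{thm:automorphisms} directly from the Torelli theorem (\autoref{thm:torelli.OG6}), interpreting the condition $\bL_G \cap \cW_{\OG6} = \emptyset$ as saying that the chamber structure on $\sP$ becomes trivial for a suitably chosen Hodge structure.

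For the direction $(\Rightarrow)$, suppose $G$ is induced by a group of symplectic automorphisms. By \autoref{thm:torelli.OG6} there exist a signed Hodge structure on $\bL$ and a Kähler chamber $C$ such that $G \subset \OO^+_\symplectic(\bL, C)$. Since $G$ is finite and sits inside $\OO^+_\symplectic(\bL)$, \autoref{lem:coinvariant.negative.definite} gives that $\bL_G$ is negative definite. Applying \autoref{lem:no.walls.in.coinv} to the set $\cW_{\OG6}$, which defines the Kähler chamber $C$ by definition, yields the required equality $\bL_G \cap \cW_{\OG6} = \emptyset$.

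For the direction $(\Leftarrow)$, assume $\bL_G$ is negative definite and $\bL_G \cap \cW_{\OG6} = \emptyset$. By \autoref{lem:L^1,1=L_G}, we may endow $\bL$ with a signed Hodge structure such that $\bL^{1,1} \cap \bL = \bL_G$ and $G \subset \OO^+_\symplectic(\bL)$. Since $\cW_{\OG6}$ is, by definition, a subset of $\bL$, we have
\[
    \cW_{\OG6} \cap \bL^{1,1} \;=\; \cW_{\OG6} \cap (\bL^{1,1} \cap \bL) \;=\; \cW_{\OG6} \cap \bL_G \;=\; \emptyset.
\]
Therefore the union of walls in \eqref{eq:exc.chamb} is empty, and the positive cone $\sP$ is itself a Kähler chamber. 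Because $G \subset \OO^+(\bL)$, every element of $G$ preserves $\sP$, so $G \subset \OO^+_\symplectic(\bL, \sP)$. Applying \autoref{thm:torelli.OG6} in the reverse direction concludes that $G$ is induced by a group of symplectic automorphisms on a manifold of type $\OG6$.

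Neither direction presents a serious obstacle, since the preliminary lemmas in \autoref{subsec:Hodge.structures} are tailored exactly for this kind of translation. The only subtle point is checking that no further chamber decomposition of $\sP$ is forced by vectors outside $\bL$: this is automatic because $\cW_{\OG6}$ is defined as a subset of the integral lattice $\bL$, so intersecting with $\bL^{1,1}$ is the same as intersecting with $\bL^{1,1} \cap \bL = \bL_G$.
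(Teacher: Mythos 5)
Your proposal is correct and follows essentially the same route as the paper: necessity via \autoref{thm:torelli.OG6} combined with \autoref{lem:coinvariant.negative.definite} and \autoref{lem:no.walls.in.coinv}, and sufficiency via \autoref{lem:L^1,1=L_G}, the observation that \(\bL^{1,1} \cap \cW_{\OG6} = \bL_G \cap \cW_{\OG6} = \emptyset\) trivializes the chamber decomposition, and then \autoref{thm:torelli.OG6} applied to \(\sP\). The only difference is that you spell out explicitly some steps the paper leaves implicit.
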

\proof
The two conditions are necessary by \autoref{lem:coinvariant.negative.definite} and \autoref{lem:no.walls.in.coinv}. 

Conversely, assume that they hold.
By \autoref{lem:L^1,1=L_G} there exists a signed Hodge structure on \(\bL\) such that \(\bL^{1,1} \cap \bL = \bL_G\) and \(G\subset \OO^+_\symplectic(\bL)\). Since \(\bL^{1,1} \cap \cW_{\OG6} = \bL_G \cap \cW_{\OG6} = \emptyset\), the decomposition \eqref{eq:exc.chamb} is trivial: there is only one Kähler chamber, namely \(\sP\) itself. 
Therefore, \(\OO^+_\symplectic(\bL) = \OO^+_\symplectic(\bL, \sP)\) and we conclude by \autoref{thm:torelli.OG6}.
\endproof 

\begin{theorem} \label{thm:bir.transformations}
A finite subgroup \(G \subset \OO(\bL)\) is induced by a group of symplectic birational transformations on a manifold of type \(\OG6\) if and only if \(\bL_G\) is negative definite and 
\begin{equation} \label{eq:thm:bir.transformations}
    \bL_G \cap \cW^\pex_{\OG6} = \emptyset.
\end{equation}
\end{theorem}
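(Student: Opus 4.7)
The plan is to mirror the proof of \autoref{thm:automorphisms} almost verbatim, replacing the wall set \(\cW_{\OG6}\) by the smaller set \(\cW^\pex_{\OG6}\) and Kähler chambers by exceptional chambers. The Torelli theorem (\autoref{thm:torelli.OG6}) already packages the geometric input; my work consists only in checking the two implications against the correct choice of chamber.

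For the necessity direction, I would argue as follows. Suppose \(G\) is induced by a group of symplectic birational transformations of a manifold of type \(\OG6\). Then by \autoref{thm:torelli.OG6} there exist a signed Hodge structure on \(\bL\) and an exceptional chamber \(C\) such that \(G \subset \OO^+_\symplectic(\bL, C)\). Since \(\bL\) has signature \((3,5)\) and \(G\) is a finite subgroup of \(\OO^+_\symplectic(\bL)\), \autoref{lem:coinvariant.negative.definite} immediately yields that \(\bL_G\) is negative definite. The set \(\cW^\pex_{\OG6}\) is clearly \(\OO(\bL)\)-invariant and consists of primitive vectors, so it defines chambers in the sense of \eqref{eq:exc.chamb}; since \(C\) is an exceptional chamber and \(G\) preserves it, \autoref{lem:no.walls.in.coinv} gives \(\bL_G \cap \cW^\pex_{\OG6} = \emptyset\).

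For the sufficiency direction, assume that \(\bL_G\) is negative definite and that \(\bL_G \cap \cW^\pex_{\OG6} = \emptyset\). By \autoref{lem:L^1,1=L_G}, I can choose a signed Hodge structure on \(\bL\) (with arbitrary positive cone \(\sP\)) for which \(\bL^{1,1} \cap \bL = \bL_G\) and \(G \subset \OO^+_\symplectic(\bL)\). Consequently
\[
    \bL^{1,1} \cap \cW^\pex_{\OG6} = \bL_G \cap \cW^\pex_{\OG6} = \emptyset,
\]
so the hyperplane arrangement defining exceptional chambers in \(\sP\) is empty and \(\sP\) itself is the unique exceptional chamber. Then \(\OO^+_\symplectic(\bL) = \OO^+_\symplectic(\bL, \sP)\), and \(G\) is induced by a group of symplectic birational transformations by the birational part of \autoref{thm:torelli.OG6}.

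There is essentially no obstacle: the argument is a direct transcription of the proof of \autoref{thm:automorphisms}, and every nontrivial input (the negativity of \(\bL_G\), the absence of walls through the invariant cone, the surjectivity of the period map and Markman's Hodge theoretic Torelli) has already been incorporated into the lemmas and into \autoref{thm:torelli.OG6}. The only point that deserves a brief remark is the use of \autoref{lem:L^1,1=L_G} with an arbitrary positive cone: this is legitimate because \(\cW^\pex_{\OG6}\) is invariant under \(-\id\), so the intersection with \(\bL^{1,1}\) does not depend on the orientation chosen for \(\sP\).
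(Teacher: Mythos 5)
Your proposal is correct and is exactly the argument the paper intends: the published proof simply says ``an analogous argument as in \autoref{thm:automorphisms} applies,'' and your write-up is that analogous argument spelled out, using \autoref{thm:torelli.OG6}, \autoref{lem:coinvariant.negative.definite} and \autoref{lem:no.walls.in.coinv} for necessity, and \autoref{lem:L^1,1=L_G} plus the triviality of the exceptional-chamber decomposition for sufficiency. No gaps; the closing remark about \(-\id\)-invariance is harmless but unnecessary, since the chamber decomposition \eqref{eq:exc.chamb} only depends on \(\cW^\pex_{\OG6} \cap \bL^{1,1}\), which is empty regardless of the choice of \(\sP\).
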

\proof An analogous argument as in \autoref{thm:automorphisms} applies: there exists a signed Hodge structure on \(\bL\) such that \(\sP\) itself is the only exceptional chamber and \(G \subset \OO^+_\symplectic(\bL) = \OO^+_\symplectic(\bL, \sP)\).
\endproof

\section{Proofs} \label{sec:sp.OG6}

Recall that \(\bL \coloneqq 3\bU \oplus 2[-2]\). We set \(\bLambda \coloneqq 5\bU\), \(\bR \coloneqq 2[2]\) and we also fix a primitive embedding \(\bL \hookrightarrow \bLambda\), necessarily with \(\bL^\perp \cong \bR\).
From now on, \(g \in \OO(\bL)\) will denote an isometry of order~\(m\). 

We first consider the case in which \(m\) is a prime number in~\autoref{subsec:prime.order}. This is enough to prove \autoref{thm:aut.trivial.action} in~\autoref{proof:thm:aut.trivial.action}.
We then go on to classify the cases in which \(m\) is a power of \(2\) in~\autoref{subsec:powers.2}, which we need for the proof of \autoref{thm:bir.trivial.action} in~\autoref{proof:thm:bir.trivial.action}. 
Finally, we treat the remaining cases in \autoref{subsec:other.orders}, and we finish off the proof of \autoref{thm:main.sp.OG6} in~\autoref{proof:thm:main.sp.OG6}.

\subsection{Prime order} \label{subsec:prime.order}
We start with two preliminary observations.

\begin{lemma} \label{lem:p=2,3,5}
If \(g \in \OO(\bL)\) has prime order \(p\) and \(\bL_g\) is negative definite, then \(p \in \{2,3,5\}\).
\end{lemma}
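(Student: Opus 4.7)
The proof is essentially a short counting argument, so I will outline it rather than develop it over many paragraphs.

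The plan is to combine two bounds on $\rank(\bL_g)$, one from above and one from below. For the upper bound: since $\bL$ has signature $(3,5)$ and $\bL_g$ is negative definite, the orthogonal decomposition $\bL \otimes \IR = (\bL^g \otimes \IR) \oplus (\bL_g \otimes \IR)$ forces every positive-definite direction of $\bL \otimes \IR$ to lie in $\bL^g \otimes \IR$. Hence $\sign(\bL^g) = (3, \rank(\bL^g) - 3)$, in particular $\rank(\bL^g) \geq 3$, giving $\rank(\bL_g) \leq 5$.

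For the lower bound: because $g$ has \emph{prime} order $p$, it is not the identity, so $\bL_g \neq 0$, i.e. $\rank(\bL_g) \geq 1$. Combined with \autoref{lem:prime.ord}, which says $p - 1 \mid \rank(\bL_g)$, this yields $p - 1 \leq \rank(\bL_g)$.

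Putting the two bounds together, $p - 1 \leq 5$, so $p \in \{2, 3, 5\}$ (the only primes $p$ with $p \leq 6$; in particular $p = 7$ is excluded because $p - 1 = 6$ cannot divide an integer bounded by $5$). There is no real obstacle here; the argument is a direct consequence of the signature of $\bL$ and the divisibility constraint from \autoref{lem:prime.ord}.
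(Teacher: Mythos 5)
Your argument is correct and is essentially the paper's own proof: the signature $(3,5)$ of $\bL$ together with negative definiteness of $\bL_g$ gives $\rank(\bL_g)\leq 5$, and then \autoref{lem:prime.ord} forces $p-1\leq 5$. Your extra remark that $\bL_g\neq 0$ (so that $p-1\mid\rank(\bL_g)$ really yields $p-1\leq\rank(\bL_g)$) is a useful explicit touch that the paper leaves implicit.
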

\proof
Since \(\sign(\bL) = (3,5)\), we have \(\rank(\bL_g) \leq 5\), so the statement follows from \autoref{lem:prime.ord}. 
\endproof

\begin{lemma} \label{lem:|O(L^sharp)|=2}
The group \(\OO(\bL^\sharp)\) has order \(2\).
\end{lemma}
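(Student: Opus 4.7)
The plan is to compute $\bL^\sharp$ explicitly as a finite quadratic form and then enumerate its isometries by hand.

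First I would observe that since $3\bU$ is unimodular, the discriminant group of $\bL = 3\bU \oplus 2[-2]$ is isomorphic to the discriminant group of $2[-2]$. Writing $e_1, e_2$ for a basis of $2[-2]$ with $e_i^2 = -2$, the dual basis is $e_i/2$, so $\bL^\sharp \cong (\IZ/2\IZ)^2$, generated by the classes of $e_1/2$ and $e_2/2$. The induced $\IQ/2\IZ$-valued quadratic form $q$ takes the three nonzero elements to
\[
    q(\overline{e_1/2}) = -\tfrac{1}{2}, \quad q(\overline{e_2/2}) = -\tfrac{1}{2}, \quad q(\overline{(e_1+e_2)/2}) = -1 \equiv 1 \mod 2\IZ.
\]

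Next I would note that any isometry $\varphi \in \OO(\bL^\sharp)$ must permute the nonzero elements of $\bL^\sharp$ while preserving their $q$-values. Since exactly one nonzero element, namely $\overline{(e_1+e_2)/2}$, has value $1 \mod 2\IZ$, it is fixed by $\varphi$. The remaining two nonzero elements, both with value $-1/2$, must be either both fixed or swapped. This gives at most two isometries: the identity, and the involution swapping $\overline{e_1/2} \leftrightarrow \overline{e_2/2}$ (which indeed fixes their sum).

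Finally I would verify that the swap is an isometry of $q$ (trivial from the symmetric expressions above), concluding $|\OO(\bL^\sharp)| = 2$. No step here is an obstacle; the only thing to be careful about is the sign convention on $[-2]^\sharp$, which determines the value of $q$ on the generators but does not affect the enumeration argument.
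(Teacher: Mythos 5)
Your proof is correct and takes essentially the same approach as the paper, whose one-line proof simply asserts that \(\OO(\bL^\sharp)\) is generated by the involution exchanging the two generators of square \(3/2 \equiv -1/2 \bmod 2\IZ\). You have merely spelled out the verification (identifying \(\bL^\sharp\) with \((\IZ/2\IZ)^2\) via the two \([-2]\) summands and noting that the third nonzero element is the unique one of square \(1 \bmod 2\IZ\), hence fixed), which is exactly the computation the paper leaves implicit.
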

\proof Indeed, \(\OO(\bL^\sharp)\) is generated by the involution exchanging the two generators of \(\bL^\sharp\) whose square is equal to \(3/2 \mod 2\IZ\).
\endproof 

For a lattice \(N\) we introduce the set
\[
    \cC(N) \coloneqq \Set{v \in N}{v^2 = -2 \text{ or } v^2 = -4}.
\]
Note that if \(N\) is negative definite, the set \(\cC(N)\) is finite and can be explicitly determined, for instance using the \texttt{sageMath} function \path{short_vector_list_up_to_length} found in \path{sage.quadratic_forms.quadratic_form} \cite{sage}.
The following technical lemma turns out to be of the utmost importance in dealing with condition~\eqref{eq:thm:bir.transformations} of \autoref{thm:bir.transformations}.

\begin{lemma} \label{lem:main.divisibility.lemma}
Let \(N\) be a lattice such that for every \(\alpha \in N^\sharp\) with 
\[
    \alpha^2 \in \set{3/2, 1} \mod 2\IZ
\]
there exists \(v \in \cC(N)\) with \(2 \mid (v, N)\) such that the class of \(v/2\) in \(N^\sharp\) is \(\alpha\). Then, for every primitive embedding \(N \hookrightarrow \bL\) it holds 
\[
    N \cap \cW_{\OG6}^\pex = \emptyset \quad \text{ if and only if } \quad |{\det(N^\perp)}| = |{\det(\bL)} \cdot {\det(N)}|.
\]
\end{lemma}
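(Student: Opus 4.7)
The plan is to prove the two directions separately. The implication from right to left is essentially a restatement of \autoref{cor:L.div.v.easy.case}: the determinantal equality forces the gluing subgroup \(H \subset N^\sharp\) of the primitive embedding \(N\hookrightarrow \bL\) to be the whole of \(N^\sharp\), which yields \((v,\bL)=1\) for every \(v \in N\) and in particular rules out any vector of \(\cC(N)\) with \((v,\bL)=2\).

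For the converse direction I would argue contrapositively. Assuming the determinantal equality fails, \eqref{eq:gluing.embedding.index} implies that \(H\) is a proper subgroup of \(N^\sharp\), so \(H^\perp \neq 0\) inside \(N^\sharp\). Pick any nonzero \(\alpha \in H^\perp\). The key observation is that the natural map
\[
    H^\perp \hookrightarrow \Gamma^\perp/\Gamma \cong \bL^\sharp, \qquad \alpha \mapsto [(\alpha,0)]
\]
is well defined (since \(\alpha\) is orthogonal to \(H\)), preserves the quadratic form, and is injective (since the gluing isometry \(\gamma\) is injective, so \((\alpha,0) \in \Gamma\) forces \(\alpha=0\)).

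At this stage the specific structure of \(\bL\) enters the picture: a direct computation on the generators coming from \(2[-2]\) shows that \(\bL^\sharp \cong (\IZ/2\IZ)^2\) has no nonzero isotropic elements, and in fact every nonzero class of \(\bL^\sharp\) has square in \(\{3/2,1\} \mod 2\IZ\). Combining this with the injection above forces \(\alpha^2 \in \{3/2,1\} \mod 2\IZ\) already inside \(N^\sharp\). The hypothesis on \(N\) then provides a vector \(v \in \cC(N)\) with \(2\mid (v,N)\) whose class \(v/2\) equals \(\alpha\).

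Finally I would verify that \(v\) actually belongs to \(\cW^\pex_{\OG6}\). Since \(v/2 \in H^\perp\), \autoref{lem:L.div.v} gives \((v,\bL)\geq 2\); on the other hand \((v,\bL)\) divides \(v^2 \in \{-2,-4\}\), and the value \((v,\bL)=4\) is incompatible with \(\bL^\sharp\) (the class of \(v/4\) would have square \(-1/4 \mod 2\IZ\), which is not realized in \(\bL^\sharp\)). Thus \((v,\bL)=2\) and \(v \in N \cap \cW^\pex_{\OG6}\). The only delicate point, and essentially the whole content of the proof, is matching \(H^\perp\) with a subgroup of \(\bL^\sharp\) in a way that respects the quadratic form; once this is set up, the peculiar structure of \(\bL^\sharp\) does all the remaining work.
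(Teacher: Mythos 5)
Your proposal is correct and follows essentially the same route as the paper's proof: both directions reduce the determinant condition via \eqref{eq:gluing.embedding.index} to \(H = N^\sharp\) (equivalently \(H^\perp = 0\)), with \autoref{cor:L.div.v.easy.case} handling one implication and, for the other, a nonzero \(\alpha \in H^\perp\) being identified with a nonzero class of \(\bL^\sharp \cong \Gamma^\perp/\Gamma\) so that \(\alpha^2 \in \set{3/2,1} \bmod 2\IZ\), after which the hypothesis on \(N\) and \autoref{lem:L.div.v} produce the required vector of \(\cW^\pex_{\OG6}\). Your additional verifications — the injectivity of \(H^\perp \hookrightarrow \bL^\sharp\) and the exclusion of \((v,\bL) = 4\) when \(v^2 = -4\) — are points the paper leaves implicit, and they are correct.
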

\proof 
Let \(H \subset N^\sharp\) be the gluing subgroup of a primitive embedding \(N \hookrightarrow \bL\) and \(M = N^\perp\). The condition \(|{\det(M)}| = |{\det(\bL)} \cdot {\det(N)}|\) is equivalent by \eqref{eq:gluing.embedding.index} to \(H = N^\sharp\), which in turn is equivalent to \(H^\perp = 0\).

If this holds, each vector \(v \in N\) satisfies \((v, \bL) = 1\) by \autoref{cor:L.div.v.easy.case}, so \(N \cap \cW_{\OG6}^\pex = \emptyset\).

Conversely, take a nonzero element \(\alpha \in H^\perp\). 
Then, thanks to \eqref{eq:L^sharp=Gamma^perp/Gamma}, we can identify the element \((0,\alpha) \in M^\sharp \oplus N^\sharp\) with a nonzero element \(\beta \in \bL^\sharp\). 
Therefore,
\[
    \alpha^2 = (0,\alpha)^2 = \beta^2 \in \set{3/2, 1} \mod 2\IZ,
\]
so by hypothesis there exists \(v \in \cC(N)\) such that \(v/2 = \alpha \in N^\sharp\).
Then, \((v, \bL) = 2\) by \autoref{lem:L.div.v}, hence \(v \in N \cap \cW_{\OG6}^\pex\).
\endproof 

\begin{remark}
Notably for our purposes, the lattices \([-2],[-4],\bA_3,\bD_4\) and \(\bD_5\) satisfy the hypothesis of \autoref{lem:main.divisibility.lemma}, as one can readily check by inspection.
\end{remark}

\begin{proposition} \label{prop:order.2}
If an isometry \(g \in \OO(\bL)\) of order \(2\) is induced by a symplectic birational transformation, then \(\bL_g\) is isomorphic to either \([-2], 2[-2], 3[-2]\) or \(\bD_4\), and condition~\eqref{eq:cond.det} holds.
\end{proposition}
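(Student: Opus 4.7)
The strategy is to combine Theorem~\ref{thm:bir.transformations} with a 2-elementarity reduction to enumerate finitely many candidates for $\bL_g$, closing each case via Lemma~\ref{lem:main.divisibility.lemma} and the 2-elementary bound~\eqref{eq:bound.m-elementary}. By Theorem~\ref{thm:bir.transformations} and Lemma~\ref{lem:p=2,3,5}, the lattice $\bL_g$ is negative definite of rank $r\in\{1,\dots,5\}$ and satisfies $\bL_g\cap\cW^\pex_{\OG6}=\emptyset$. To show that $\bL_g$ is additionally 2-elementary, I would extend $g$ to an involution $f\in\OO(\bLambda)$ on the unimodular overlattice $\bLambda=5\bU\supset\bL$: since $|\OO(\bL^\sharp)|=2$ by Lemma~\ref{lem:|O(L^sharp)|=2} and the map $\OO(\bR)\to\OO(\bR^\sharp)\cong\OO(\bL^\sharp)$ is surjective with an involution in each coset (namely $\id$ and the swap of the two summands of $\bR=2[2]$), one can pick an involution $h\in\OO(\bR)$ with $h^\sharp=g^\sharp$ and glue $g\oplus h$ to $f$. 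By Lemma~\ref{lem:unimodular.m-el}, $\bLambda_f$ is 2-elementary, and a short analysis of the overlattice $\bLambda_f\supset\bL_g\oplus\bR_h$ forces $\bL_g$ itself to be 2-elementary; by Lemma~\ref{lem:embedding.m-elementary}, so is $M=\bL^g$. The negative definite 2-elementary lattices of rank at most $5$ are then $k[-2]$ for $k\in\{1,\dots,5\}$, together with $\bD_4$ and $\bD_4\oplus[-2]$.

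For each candidate $N\neq 5[-2]$ in this list, the hypothesis of Lemma~\ref{lem:main.divisibility.lemma} holds by a direct short check representing each class of square $3/2$ or $1$ modulo $2\IZ$, so $\bL_g\cap\cW^\pex_{\OG6}=\emptyset$ becomes equivalent to the maximality of the gluing subgroup of $\bL_g\hookrightarrow\bL$, which via~\eqref{eq:gluing.embedding.index} is precisely equation~\eqref{eq:cond.det}. The candidates $4[-2]$ and $\bD_4\oplus[-2]$ are then ruled out because maximal gluing would yield $|{\det M}|=64$ and $32$ respectively, both exceeding the 2-elementary bound~\eqref{eq:bound.m-elementary} for $M$ of rank $4$ and $3$. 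The surviving candidates $[-2],2[-2],3[-2],\bD_4$ admit primitive embeddings with maximal gluing, constructed via the algorithm of Section~\ref{subsec:prim.emb}.

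The remaining case $N=5[-2]$, where Lemma~\ref{lem:main.divisibility.lemma} is inapplicable (its hypothesis fails at the class $\sum v_i/2\in N^\sharp$), is the main obstacle. The wall-avoidance condition forces the minimum weight of nonzero elements of $H^\perp\subset N^\sharp=(\IZ/2)^5$ to be at least $3$; combined with the bound~\eqref{eq:bound.m-elementary} for a rank-$3$ $M$, this excludes $|H^\perp|=2$ and leaves $H^\perp$ as the unique (up to coordinate permutation) $[5,2,3]$ binary code. A direct check via~\eqref{eq:L^sharp=Gamma^perp/Gamma} then shows that the induced form on $H=(H^\perp)^\perp$ is not isometric to that of $M(-1)^\sharp$ for the only valid $M=3[2]$, so $5[-2]$ is excluded as well.
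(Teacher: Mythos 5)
Your overall strategy mirrors the paper's, but there is a genuine gap in the branch \(g^\sharp \neq \id\): the claim that \(\bL_g\) is \(2\)-elementary. When \(h\in\OO(\bR)\) is the swap of the two summands of \(\bR = 2[2]\), one has \(\bR_h \cong [4]\) and \(\bL_g \cong (\bR_h)^\perp \subset \bLambda_f\); since \([4]\) is only \(4\)-elementary, \autoref{lem:embedding.m-elementary} yields merely that \(\bL_g\) is \(\lcm(4,2)=4\)-elementary, and no ``short analysis of the overlattice'' can improve this. Concretely, \(\bA_3\) (whose discriminant group is \(\IZ/4\IZ\)) occurs as the orthogonal complement of a primitive vector of square \(4\) in the \(2\)-elementary lattice \(\bU \oplus 2[-2]\). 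So your candidate list \([-2],\dots,5[-2],\bD_4,\bD_4\oplus[-2]\) is incomplete: in the case \(g^\sharp\neq\id\) the lattices \([-4]\), \(\bA_3\), \(\bA_3\oplus[-2]\), \(\bD_5\), \(\bD_4\oplus[-4]\), etc.\ must also be considered and ruled out. The paper does this by enumerating the possible \(2\)-elementary hyperbolic lattices \(\bLambda_f\) of rank \(\leq 6\) via \autoref{thm:involutions.unimodular}, computing all \(N \cong [4]^\perp \subset \bLambda_f\), checking that each such \(N\) satisfies the hypothesis of \autoref{lem:main.divisibility.lemma}, and then deducing from \autoref{lem:cond.det} that \eqref{eq:cond.det} forces \(g^\sharp = \id\), a contradiction. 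You need to supply this (or an equivalent) argument before your proof is complete.

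On the branch \(g^\sharp=\id\) your argument agrees with the paper's, and your separate treatment of \(5[-2]\) is actually more careful than the paper's: the hypothesis of \autoref{lem:main.divisibility.lemma} does fail for \(5[-2]\) at the class \(\alpha = \sum_i v_i/2\), which has \(\alpha^2 \equiv 3/2 \bmod 2\IZ\) but no representative of square \(-2\) or \(-4\), whereas the paper asserts that all seven candidates satisfy it. Your coding-theoretic exclusion is essentially right; the only small omission is that you must also rule out \(\dim_{\IF_2} H^\perp = 3\) (i.e.\ \(|{\det M}|=2\)), which follows because no binary \([5,3,3]\) code exists, before concluding that \(H^\perp\) is the \([5,2,3]\) code and that the glue group \(H\) cannot be isometric to \((3[2])^\sharp\).
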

\proof 
Let \(N = \bL_g\) and suppose first that \(g^\sharp = \id\).

By \autoref{lem:coinv.m-elementary}, \(N\) is an even \(2\)-elementary negative definite lattice of rank \(\leq 5\). Applying \cite[Theorem 3.6.2]{Nikulin:int.sym.bilinear.forms}, we see that \(N\) is isometric to one of the following lattices:
\[ 
    [-2], 2[-2], 3[-2], \bD_4, 4[-2], \bD_4 \oplus [-2], 5[-2].
\]
By inspection, we see that all these lattices satisfy the hypothesis of \autoref{lem:main.divisibility.lemma}. Therefore, condition~\eqref{eq:thm:bir.transformations} of \autoref{thm:bir.transformations} implies condition~\eqref{eq:cond.det}.
This excludes the cases in which \(N \cong 4[-2], \bD_4 \oplus [-2], 5[-2]\), since they do not admit such a primitive embedding.

Now we turn to the case \(g^\sharp \neq \id\).

Consider the embedding \(\bL \hookrightarrow \bLambda\). We can extend the involution \(g\) to an isometry \(f \in \OO(\bLambda)\), also with \(|f| = 2\), by the involution \(h \in \OO(\bR)\) exchanging the generators on \(\bR\).
Since \(\bLambda_f\) is an even \(2\)-elementary hyperbolic lattice of rank \(\leq 6\), \autoref{thm:involutions.unimodular} yields the following possibilities for its isometry class: 
\begin{gather*}
    [2], \bU, \bU(2), [2]\oplus[-2], \bU \oplus [-2], \bU(2) \oplus [-2], \bU \oplus 2[-2], \bU(2) \oplus 2[-2], \\
    \bU \oplus 3[-2], \bU(2) \oplus 3[-2], \bU \oplus \bD_4, \bU \oplus 4[-2].
\end{gather*}

We then compute the possible embeddings \(\bR_h \cong [4] \hookrightarrow \bLambda_f\) to find \(N \cong (\bR_h)^\perp\).
The result of this computation is that \(N\) is isometric to one of the following lattices:
\begin{gather*} 
    [-4], [-2] \oplus [-4], \bA_3, 2[-2] \oplus [-4], \bA_3 \oplus [-2], 3[-2] \oplus [-4], \\ \bD_5, \bD_4 \oplus [-4], \bA_3 \oplus 2[-2], 4[-2] \oplus [-4].
\end{gather*}

By inspection, we see again that all these lattices satisfy the hypothesis of \autoref{lem:main.divisibility.lemma}. 
Therefore, condition~\eqref{eq:thm:bir.transformations} of \autoref{thm:bir.transformations} implies condition~\eqref{eq:cond.det}, 
which in turn implies implies that \(g^\sharp = \id\) by \autoref{lem:cond.det}, contradicting the assumption. \endproof 

In the case of odd prime order \(p = |g| > 2\), we have \(g^\sharp = \id\). Hence, we can use the identity on \(\bR\) to extend \(g\) to an isometry \(f \in \OO(\bLambda)\). 
By \cite[Theorem~1.1]{Brandhorst.Cattaneo:prime.order.unimodular}, we can classify the possible sublattices \(\bL_g \cong \bLambda_{f}\), which are \(p\)-elementary (see also \cite[Chapter~15, Theorem~13]{Conway.Sloane:sphere.packings.lattices.groups}). 
Since \(\det(\bL_g)\) is odd, the embedding subgroup of \(\bL_g \hookrightarrow \bL\) is trivial. Thus, condition~\eqref{eq:cond.det} holds automatically and we obtain the following results.

\begin{proposition} \label{prop:order.3}
If an isometry \(g \in \OO(\bL)\) of order~\(3\) is induced by a symplectic birational transformation, then \(\bL_g\) is isomorphic to either \(\bA_2\) or \(2\bA_2\),
and condition \eqref{eq:cond.det} holds. \qed
\end{proposition}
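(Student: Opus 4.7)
The plan is to spell out the strategy already sketched in the paragraph immediately preceding the statement. First, since \(|g| = 3\) is odd and \(|{\OO(\bL^\sharp)}| = 2\) by \autoref{lem:|O(L^sharp)|=2}, the image of \(g\) in \(\OO(\bL^\sharp)\) is trivial, i.e.\ \(g^\sharp = \id\). Using the fixed primitive embedding \(\bL \hookrightarrow \bLambda = 5\bU\) with orthogonal complement \(\bR = 2[2]\), I would then extend \(g\) to an isometry \(f \in \OO(\bLambda)\) by setting \(f|_{\bR} = \id\); such an extension is guaranteed by the discussion in \autoref{subsec:m-elementary} precisely because \(g^\sharp = \id\), and it satisfies \(\bL_g \cong \bLambda_f\).

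Next, I would enumerate the possibilities for this coinvariant lattice. By \autoref{lem:unimodular.m-el}, \(\bLambda_f\) is \(3\)-elementary. Since \(g\) is assumed to be induced by a symplectic birational transformation, \autoref{thm:bir.transformations} forces \(\bL_g\) to be negative definite. By \autoref{lem:prime.ord}, the rank of \(\bL_g\) is a nonzero multiple of \(p-1 = 2\), and since \(\sign(\bL) = (3,5)\) it is bounded above by \(5\); hence \(\rank(\bL_g) \in \{2, 4\}\). Specialising Brandhorst and Cattaneo's classification \cite[Theorem~1.1]{Brandhorst.Cattaneo:prime.order.unimodular} of prime-order isometries to \(\bLambda = 5\bU\) and \(p = 3\), one reads off that the only negative definite \(3\)-elementary coinvariant sublattices of rank \(2\) or \(4\) which can arise are \(\bA_2\) and \(2\bA_2\). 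Higher-rank candidates such as \(\bE_6\) are ruled out immediately by the rank bound.

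Finally, condition~\eqref{eq:cond.det} holds automatically. Indeed, \(\det(\bL_g) \in \{3, 9\}\) is coprime to \(\det(\bL) = 4\), so the embedding subgroup \(K \subset \bL^\sharp\) of the primitive embedding \(\bL_g \hookrightarrow \bL\) has order dividing both \(|\bL^\sharp| = 4\) and \(|\bL_g^\sharp|\); therefore \(|K| = 1\). By formula~\eqref{eq:gluing.embedding.index}, this is equivalent to \(|{\det(\bL^g)}| = |{\det(\bL)} \cdot {\det(\bL_g)}|\). As a by-product, \autoref{cor:L.div.v.easy.case} yields \((v, \bL) = 1\) for every \(v \in \bL_g\), so \(\bL_g \cap \cW^\pex_{\OG6} = \emptyset\) is automatic and consistent with \autoref{thm:bir.transformations}.

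The only step that could present any technical difficulty is extracting the correct entries from the Brandhorst--Cattaneo table when specialised to \(\bLambda = 5\bU\) and order \(3\); but this is routine bookkeeping rather than a conceptual obstacle, and unlike the \(p = 2\) case (where one must appeal to the more delicate \autoref{lem:main.divisibility.lemma}) here the oddness of \(\det(\bL_g)\) makes the walls condition trivially satisfied.
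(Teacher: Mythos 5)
Your proposal is correct and follows essentially the same route as the paper: the paper's argument (given in the paragraph preceding the proposition) likewise observes that \(g^\sharp=\id\) for odd prime order, extends \(g\) by the identity on \(\bR\) to \(f\in\OO(\bLambda)\), invokes Brandhorst--Cattaneo's classification to identify \(\bL_g\cong\bLambda_f\), and deduces condition~\eqref{eq:cond.det} from the oddness of \(\det(\bL_g)\) forcing the embedding subgroup to be trivial. Your additional remarks (the rank restriction from \autoref{lem:prime.ord} and the automatic vanishing of \(\bL_g\cap\cW^\pex_{\OG6}\)) are accurate but not needed beyond what the paper records.
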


\begin{proposition} \label{prop:order.5}
If an isometry \(g \in \OO(\bL)\) of order~\(5\) is induced by a symplectic birational transformation, then \(\bL_g\) is isomorphic to \(\bA_4\),
and condition \eqref{eq:cond.det} holds. \qed
\end{proposition}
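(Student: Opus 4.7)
The plan is to follow exactly the blueprint spelled out in the paragraph preceding the statement. First, since $|g| = 5$ is odd and $\OO(\bL^\sharp)$ has order~$2$ by \autoref{lem:|O(L^sharp)|=2}, the induced action $g^\sharp$ must be trivial. This allows me to extend $g$ to an isometry $f \in \OO(\bLambda)$ of order $5$ by acting as the identity on the orthogonal complement $\bR \cong \bL^\perp$, and it yields the identification $\bL_g \cong \bLambda_f$.

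Next I would nail down the rank and elementary type of $\bL_g$. By \autoref{thm:bir.transformations}, the coinvariant lattice $\bL_g$ is negative definite, and since $\sign(\bL) = (3,5)$ we have $\rank(\bL_g) \leq 5$. On the other hand, \autoref{lem:prime.ord} forces $\rank(\bL_g)$ to be divisible by $p-1 = 4$, so $\rank(\bL_g) = 4$. Since $\bLambda$ is unimodular, \autoref{lem:unimodular.m-el} implies that $\bLambda_f \cong \bL_g$ is $5$-elementary.

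Now I would classify negative definite even $5$-elementary lattices of rank $4$. By \cite[Theorem~1.1]{Brandhorst.Cattaneo:prime.order.unimodular} (or alternatively \cite[Chapter~15, Theorem~13]{Conway.Sloane:sphere.packings.lattices.groups}), applied to the unimodular pair $(\bLambda, f)$, the only option for $\bLambda_f$ with these invariants is $\bA_4$. Hence $\bL_g \cong \bA_4$.

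Finally, condition~\eqref{eq:cond.det} is automatic: since $|{\det(\bL_g)}| = |{\det(\bA_4)}| = 5$ is coprime to $|\bL^\sharp| = 4$, any embedding subgroup $K \subset \bL^\sharp$ with an isometric copy in $\bL_g^\sharp$ must be trivial, so the embedding index is $k = 1$, and by \eqref{eq:gluing.embedding.index} we obtain $|{\det(\bL^g)}| = |{\det(\bL)} \cdot {\det(\bL_g)}|$. I do not expect any real obstacle here; the main substance is outsourced to the classification of $p$-elementary coinvariant lattices in unimodular lattices, and the rank constraint $4 \leq \rank(\bL_g) \leq 5$ combined with \autoref{lem:Trieste} (forcing $\rank(\bL_g)$ to be even) leaves only one candidate.
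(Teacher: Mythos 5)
Your proposal is correct and follows essentially the same route as the paper: extend \(g\) by the identity on \(\bR\cong\bL^\perp\) to an order-\(5\) isometry \(f\) of the unimodular lattice \(\bLambda\), invoke \cite[Theorem~1.1]{Brandhorst.Cattaneo:prime.order.unimodular} to identify \(\bL_g\cong\bLambda_f\cong\bA_4\), and deduce \eqref{eq:cond.det} from the coprimality of \(\det(\bL_g)=5\) and \(|\bL^\sharp|=4\). You merely make explicit the rank computation (\(\rank(\bL_g)=4\) via \autoref{lem:prime.ord} and negative definiteness) that the paper leaves implicit.
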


\subsection{Proof of \autoref{thm:aut.trivial.action}} \label{proof:thm:aut.trivial.action}

Let us assume that a nontrivial isometry \(g \in \OO(\bL)\) of finite order is induced by a symplectic automorphism. 
Up to passing to a power of \(g\), we can suppose without loss of generality that \(p = |g|\) is a prime number. 

By \autoref{thm:automorphisms} the coinvariant sublattice \(\bL_g\) is negative definite, hence \autoref{lem:p=2,3,5} applies. 
A group of automorphisms is in particular a group of birational transformations. Hence, by \autoref{prop:order.2}, \ref{prop:order.3} and \ref{prop:order.5}, we know that \(\bL_g\) must contain a vector \(w\) of square \(-2\).

Since \((w, \bL) \mid w^2\), either \((w, \bL) = 1\) or \((w, \bL) = 2\). In both cases, \(w \in \cW_{\OG6}\).
Therefore, \(\bL_g \cap \cW_{\OG6} \neq \emptyset\), which contradicts the second condition of \autoref{thm:automorphisms}. \qed

\subsection{Powers of \texorpdfstring{\(2\)}{2}} \label{subsec:powers.2}
We now examine the cases in which \(|g| = 2^k\) for \(k > 1\).

\begin{lemma} \label{lem:old.prop:C}
If \(g \in \OO(\bL)\) has order~\(m\) with \(4 \mid m\), then \(\bL_g\) is an \(m\)-elementary lattice.
\end{lemma}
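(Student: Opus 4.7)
The plan is to extend $g$ to an isometry $f$ of the unimodular lattice $\bLambda$, apply \autoref{lem:unimodular.m-el} to conclude that $\bLambda_f$ is $m$-elementary, and then transfer this property back to $\bL_g$. Since \autoref{lem:|O(L^sharp)|=2} tells us that $\OO(\bL^\sharp)$ has order $2$, the image $g^\sharp$ is either trivial or equal to the nontrivial involution, so I handle these two cases separately, in the same spirit as the proof of \autoref{prop:order.2}.

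If $g^\sharp = \id$, I set $f := g \oplus \id_\bR \in \OO(\bLambda)$, which has order $m$. A direct computation using the primitivity of $\bL$ and $\bL_g$ shows that $\bLambda_f = \bL_g$, so \autoref{lem:unimodular.m-el} concludes the argument.

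If instead $g^\sharp \neq \id$, I extend $g$ by the involution $h \in \OO(\bR)$ exchanging the two generators, which is the unique element inducing the nontrivial class in $\OO(\bR^\sharp) \cong \OO(\bL^\sharp)$. This produces $f := g \oplus h \in \OO(\bLambda)$ of order $\lcm(m,2) = m$, so by \autoref{lem:unimodular.m-el} the lattice $\bLambda_f$ is $m$-elementary. A quick calculation identifies $\bR_h \cong [4]$, which is $m$-elementary precisely because $4 \mid m$. Since $\bLambda \otimes \IQ$ splits orthogonally as $(\bL \otimes \IQ) \oplus (\bR \otimes \IQ)$ and both $\bL$ and $\bR$ are primitive in $\bLambda$, the sublattices $\bL_g$ and $\bR_h$ sit primitively inside $\bLambda_f$, are mutually orthogonal, and have complementary ranks; hence they are each other's orthogonal complements in $\bLambda_f$. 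An application of \autoref{lem:embedding.m-elementary} then yields that $\bL_g$ is $\lcm(m,m) = m$-elementary.

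The role of the hypothesis $4 \mid m$ is concentrated entirely in the second case, where it is required to ensure that $\bR_h \cong [4]$ is $m$-elementary; without it \autoref{lem:embedding.m-elementary} would only give $\lcm(m,4)$-elementarity of $\bL_g$. The one bookkeeping subtlety I expect is checking carefully that in the second case the orthogonal complement of $\bR_h$ in $\bLambda_f$ is genuinely $\bL_g$ (and not merely a primitive sublattice of the right rank), but this reduces to verifying the primitivity of $\bL$ and $\bR$ inside $\bLambda$ and tracking eigenspaces through the direct sum decomposition $f = g \oplus h$.
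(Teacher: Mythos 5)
Your proof is correct and follows essentially the same route as the paper: split on whether \(g^\sharp\) is trivial (the trivial case being exactly \autoref{lem:coinv.m-elementary}, which you re-derive inline), and otherwise extend \(g\) by the swap involution \(h\) on \(\bR\) to \(f \in \OO(\bLambda)\), apply \autoref{lem:unimodular.m-el} to \(\bLambda_f\), and use \autoref{lem:embedding.m-elementary} with \(\bR_h \cong [4]\) and \(\bL_g \cong \bR_h^\perp \subset \bLambda_f\). (Your parenthetical that \(h\) is the \emph{unique} isometry of \(\bR\) inducing the nontrivial class in \(\OO(\bR^\sharp)\) is inaccurate --- composing with sign changes gives others --- but this plays no role in the argument.)
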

\proof 
If \(g^\sharp = \id\), \autoref{lem:coinv.m-elementary} applies immediately. If \(g^\sharp \neq \id\), we can argue as follows.

We consider again the embedding \(\bL \hookrightarrow \bLambda\) and the extension of \(g\) to an isometry \(f \in \OO(\bLambda)\) by the involution \(h \in \OO(\bR)\) exchanging the generators of \(\bR\).
By \autoref{lem:unimodular.m-el} the lattice \(\bLambda_f\) is \(m\)-elementary. Hence, the claim holds by \autoref{lem:embedding.m-elementary}, because \(\bR_h \cong [4]\) and \(\bL_g \cong \bR_h^\perp \subset \bLambda_f\).
\endproof 

\begin{proposition} \label{prop:order.4}
If an isometry \(g \in \OO(\bL)\) of order~\(4\) is induced by a symplectic birational transformation, then \(\bL_g\) is isomorphic to either \(\bA_3, \bD_4, \bA_3 \oplus [-2]\) or \(\bD_5\), and condition~\eqref{eq:cond.det} holds.
\end{proposition}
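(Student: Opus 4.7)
The plan is to mirror the strategy of \autoref{prop:order.2}, now exploiting that \(\bL_g\) is \(4\)-elementary by \autoref{lem:old.prop:C}. First, I reduce the problem to a finite enumeration: by \autoref{thm:bir.transformations} and \autoref{lem:coinvariant.negative.definite}, \(\bL_g\) is negative definite, so \(\rank(\bL_g) \leq 5\); and since \(g\) acts trivially on \(\bL^g\), the restriction \(g|_{\bL_g}\) has order exactly~\(4\), so its minimal polynomial is divisible by the cyclotomic polynomial \(\Phi_4 = x^2 + 1\), forcing \(\rank(\bL_g) \geq 2\). Combined with the \(4\)-elementary condition, this confines \(\bL_g\) to a finite list produced via the algorithm of \autoref{subsec:m-elementary}.

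I then split into the two cases \(g^\sharp = \id\) and \(g^\sharp \neq \id\), exactly as in \autoref{prop:order.2}. In the first case, the enumeration above already gives all candidates for \(\bL_g\). In the second case, since \(\OO(\bL^\sharp) \cong \IZ/2\IZ\) by \autoref{lem:|O(L^sharp)|=2}, I extend \(g\) to an isometry \(f \in \OO(\bLambda)\) by letting it act on \(\bR\) through the swap \(h\) of the two generators of \(\bR = 2[2]\), which realizes the nontrivial element of \(\OO(\bR^\sharp)\). The extension \(f\) still has order~\(4\), so \(\bLambda_f\) is \(4\)-elementary by \autoref{lem:unimodular.m-el}, and \(\bL_g \cong \bR_h^\perp \subset \bLambda_f\) with \(\bR_h \cong [4]\) yields a second finite list of candidates.

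For each candidate \(N\) on these lists, I verify the hypothesis of \autoref{lem:main.divisibility.lemma}: every \(\alpha \in N^\sharp\) with \(\alpha^2 \in \set{3/2, 1} \mod 2\IZ\) must be represented by some \(v \in \cC(N)\) of divisibility \(2\) in \(N\). Where the hypothesis holds, \autoref{thm:bir.transformations} forces the determinantal identity~\eqref{eq:cond.det}, equivalent to the gluing subgroup of \(N \hookrightarrow \bL\) being all of \(N^\sharp\) (see \autoref{lem:cond.det}). Using the algorithm of \autoref{subsec:prim.emb} to decide which primitive embeddings exist with such gluing, and checking that \(N\) itself admits an order-\(4\) isometry with trivial fixed sublattice, one is left with exactly \(\bA_3\), \(\bD_4\), \(\bA_3 \oplus [-2]\), and \(\bD_5\).

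The main obstacle is bookkeeping. The list of \(4\)-elementary candidates is substantially longer than the \(2\)-elementary one of \autoref{prop:order.2}, and small-rank lattices such as \(2[-2]\) -- which do carry an order-\(4\) isometry on their own and satisfy the divisibility hypothesis of \autoref{lem:main.divisibility.lemma} -- must still be excluded, because no primitive embedding into \(\bL\) with full gluing exists. Carrying out these verifications case by case, with the aid of \texttt{sageMath}, is tedious but mechanical.
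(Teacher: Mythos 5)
Your overall strategy matches the paper's: use \autoref{lem:old.prop:C} to see that \(N \coloneqq \bL_g\) is a \(4\)-elementary negative definite lattice of rank \(\leq 5\) embedding primitively into \(\bL\), enumerate the candidates, force condition~\eqref{eq:cond.det} via \autoref{lem:main.divisibility.lemma}, and then test each survivor for a suitable order-\(4\) isometry. (Your split into the cases \(g^\sharp = \id\) and \(g^\sharp \neq \id\) is redundant here --- unlike in \autoref{prop:order.2}, the lemma \autoref{lem:old.prop:C} already gives \(4\)-elementarity in both cases --- but it is harmless.)

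There is, however, a genuine gap in your final filtering step. Your two criteria --- existence of a primitive embedding \(N \hookrightarrow \bL\) with full gluing, and existence of an order-\(4\) isometry of \(N\) with trivial fixed sublattice --- do not suffice to cut the list down to \(\bA_3, \bD_4, \bA_3\oplus[-2], \bD_5\). Your stated reason for excluding \(2[-2]\) is false: a primitive embedding \(2[-2] \hookrightarrow \bL\) with gluing subgroup all of \((2[-2])^\sharp\) does exist (its orthogonal complement is \(\bU \oplus 2[2] \oplus 2[-2]\), of determinant \(-16 = -\det(\bL)\cdot\det(2[-2])\); this is precisely the order-\(2\) row of \autoref{tab:sp.OG6} with \(\bL_g = 2[-2]\)). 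The condition that actually eliminates \(2[-2]\) --- and likewise \(2[-4]\), \(3[-2]\), and other survivors of the determinant test --- is the one you omit: by \autoref{lem:cond.det}, once \eqref{eq:cond.det} holds the restriction \(g' = g|_N\) must satisfy \((g')^\sharp = \id\) in addition to \(|g'| = 4\) and \(N^{g'} = 0\). For example, the only order-\(4\) isometries of \(2[-2]\) with trivial fixed sublattice are \(\pm\) the rotation \((e_1,e_2)\mapsto(e_2,-e_1)\), and these swap the two generators of \((2[-2])^\sharp \cong (\IZ/2\IZ)^2\), hence act nontrivially on the discriminant group; similarly \(3[-2]\) carries \((e_1,e_2,e_3)\mapsto(e_2,-e_1,-e_3)\), which has order \(4\) and trivial fixed sublattice but permutes the discriminant generators. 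Without imposing \((g')^\sharp = \id\), your argument terminates with a strictly larger list than the one in the statement.
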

\proof 
The lattice \(N \coloneqq \bL_g\) is \(4\)-elementary by \autoref{lem:old.prop:C},
and negative definite by \autoref{thm:bir.transformations}.
We can list all (nonzero) \(4\)-elementary negative definite lattices satisfying \(\rank(N) \leq 5\) and \(|{\det(N)}| \leq 4^5 = 1024\) by \eqref{eq:bound.m-elementary}. 
Since \(N\) admits a primitive embedding \(N \hookrightarrow \bL\), a computation shows that \(N\) must be isomorphic to one of the following lattices:
\begin{gather*}
    [-2], [-4], 2[-2], [-2]\oplus[-4], 2[-4], \bA_3, 3[-2], 2[-2]\oplus[-4], [-2]\oplus 2[-4], 3[-4],
    \bD_4, \\ \bA_3\oplus[-2], \bA_3 \oplus [-4], 4[-2], 3[-2]\oplus[-4], 2[-2]\oplus 2[-4], [-2]\oplus 3[-4], 
    \bD_5, \bD_4 \oplus [-2], \\ \bD_4\oplus[-4],\bA_3 \oplus 2[-2], \bA_3 \oplus [-2] \oplus [-4], 5[-2],
    3[-2]\oplus 2[-4], 2[-2] \oplus 3[-4].
\end{gather*}

For all these lattices, the hypothesis of \autoref{lem:main.divisibility.lemma} is satisfied. Therefore, condition~\eqref{eq:thm:bir.transformations} forces condition~\eqref{eq:cond.det}. The only lattices admitting such a primitive embedding are the following ones:
\begin{gather*}
    [-2], [-4], 2[-2], [-2]\oplus[-4], 2[-4], \bA_3, 3[-2], 2[-2]\oplus[-4], [-2]\oplus 2[-4], 3[-4], \\ \bD_4, \bA_3\oplus[-2], \bD_5.
\end{gather*}

Finally, we observe that the restriction \(g' = g|_N \in \OO(N)\) satisfies \(|g'| = 4\), \(N^{g'} = 0\) and \((g')^\sharp = \id\) by \autoref{lem:cond.det}. A direct computation shows that only the lattices listed in the statement admit such an isometry \(g' \in \OO(N)\).
\endproof

\begin{corollary} \label{prop:A3.if.m|4}
If an isometry \(g \in \OO(\bL)\) of order divisible by~\(4\) is induced by a symplectic birational transformation, then there exists a primitive embedding \(\bA_3 \hookrightarrow \bL_g\).
\end{corollary}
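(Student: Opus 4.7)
The plan is to reduce to the case of order exactly $4$, which is already classified in \autoref{prop:order.4}, and then exhibit a primitive copy of $\bA_3$ inside each of the four possible coinvariant sublattices.

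First, suppose $|g| = 4k$ for some positive integer $k$, and set $g_0 \coloneqq g^k$, so that $|g_0| = 4$. Since $g_0$ is a power of $g$, it is induced by a (symplectic birational) power of the original transformation. We have the inclusion of fixed loci $\bL^g \subset \bL^{g_0}$ (anything fixed by $g$ is fixed by $g_0$), and passing to orthogonal complements in $\bL$ yields
\[
    \bL_{g_0} \subset \bL_g.
\]
Moreover, $\bL_{g_0}$ is saturated (hence primitive) in $\bL$ because it is defined as an orthogonal complement, and therefore it is also primitive in $\bL_g$.

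The second step is to apply \autoref{prop:order.4}, which forces $\bL_{g_0}$ to be isometric to one of
\[
    \bA_3, \quad \bD_4, \quad \bA_3 \oplus [-2], \quad \bD_5.
\]
In each of these four cases one needs to verify that $\bA_3$ admits a primitive embedding into $\bL_{g_0}$. Two of the cases are immediate: the identity for $\bL_{g_0} = \bA_3$, and the canonical summand inclusion for $\bL_{g_0} = \bA_3 \oplus [-2]$. For $\bD_4$ and $\bD_5$, realised as sublattices of $\IZ^4$ and $\IZ^5$ with even coordinate sum, one checks that the standard $A_3$-type subsystem spanned by $e_1-e_2,\,e_2-e_3,\,e_3-e_4$ is saturated: if $v \in \bD_n$ satisfies $2v \in \bA_3$, then $v$ itself already has zero coordinate sum, hence $v \in \bA_3$. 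This gives the desired primitive embedding in all four cases.

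Composing the primitive embedding $\bA_3 \hookrightarrow \bL_{g_0}$ with the primitive inclusion $\bL_{g_0} \hookrightarrow \bL_g$ yields a primitive embedding $\bA_3 \hookrightarrow \bL_g$, completing the argument. I do not anticipate any serious obstacle here: the work has essentially already been done by \autoref{prop:order.4}, and the only remaining point is the elementary verification of primitivity of $\bA_3$ inside $\bD_4$ and $\bD_5$, which is a direct lattice-theoretic check.
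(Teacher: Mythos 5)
Your proposal is correct and follows essentially the same route as the paper: reduce to a power of $g$ of order exactly $4$, invoke \autoref{prop:order.4}, and then check that the resulting copy of $\bA_3$ sits primitively in $\bL_g$. The only difference is in the last step, where the paper disposes of primitivity in one line by noting that $\bA_3$ admits no even overlattices of index $>1$, whereas you verify it case by case inside $\bD_4$ and $\bD_5$ and then compose with the primitive inclusion $\bL_{g_0}\hookrightarrow\bL_g$ --- both arguments are valid.
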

\proof 
Let \(g'\) be a power of \(g\) with \(|g'| = 4\). By inspection of all cases in~\autoref{prop:order.4}, we see that \(\bA_3 \hookrightarrow \bL_{g'}\). Therefore, \(\bL_g\) contains a sublattice isomorphic to \(\bA_3\). Such an embedding is necessarily primitive, as \(\bA_3\) does not admit even overlattices of index \(>1\), so we conclude. 
\endproof 

\begin{lemma} \label{lem:IC.lemma}
A lattice \(N\) admitting primitive embeddings \(\bA_n \hookrightarrow N \hookrightarrow \bL\) satisfies
\[
    \ell_p(N^\sharp) \leq \begin{cases} \rank(N) - n & \text{if \(p \nmid n+1\)} \\ \rank(N) - n + 1 & \text{if \(p \mid n+1\)}. \end{cases}
\]
Moreover, if \(p \neq 2\), then
\[
    \ell_p(N^\sharp) \leq 8 - \rank(N).
\]
\end{lemma}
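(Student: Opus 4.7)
The plan is to handle the two inequalities separately by exploiting the two primitive embeddings $\bA_n \hookrightarrow N$ and $N \hookrightarrow \bL$, and in the second case to compare $p$-parts of the discriminant forms.

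For the first inequality, I set $M \coloneqq \bA_n^\perp \subset N$, so that $\rank(M) = \rank(N) - n$. The standard theory of overlattices, applied to the finite index inclusion $\bA_n \oplus M \hookrightarrow N$ (cf.~\autoref{subsec:prim.emb} and~\eqref{eq:L^sharp=Gamma^perp/Gamma}), exhibits $N^\sharp$ as a subquotient of $\bA_n^\sharp \oplus M^\sharp$. Since $p$-length is monotone under passing to subgroups and quotients, this yields
\[
  \ell_p(N^\sharp) \leq \ell_p(\bA_n^\sharp) + \ell_p(M^\sharp).
\]
Now $\bA_n^\sharp \cong \IZ/(n+1)\IZ$, so $\ell_p(\bA_n^\sharp) = 1$ if $p \mid n+1$ and $0$ otherwise; together with the trivial bound $\ell_p(M^\sharp) \leq \rank(M) = \rank(N) - n$ this produces the stated dichotomy.

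For the second inequality, assume $p \neq 2$ and set $K \coloneqq N^\perp \subset \bL$, so that $\rank(K) = 8 - \rank(N)$. The primitive embedding $N \hookrightarrow \bL$ is encoded by a gluing isometry $\gamma \colon H \to H'$ between subgroups $H \subset N^\sharp$ and $H' \subset K(-1)^\sharp$, with graph $\Gamma$ satisfying $\bL^\sharp \cong \Gamma^\perp/\Gamma$ by~\eqref{eq:L^sharp=Gamma^perp/Gamma}. Since $\bL^\sharp \cong (\IZ/2\IZ)^2$ is $2$-elementary, its odd $p$-part vanishes, so $\Gamma_p$ coincides with its own orthogonal complement inside the $p$-part $A_p$ of $N^\sharp \oplus K(-1)^\sharp$. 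Different prime components of a discriminant form are mutually orthogonal, so the form on $A_p$ is nondegenerate; $\Gamma_p$ is therefore Lagrangian and satisfies $|\Gamma_p|^2 = |A_p| = |N^\sharp_p|\cdot|K^\sharp_p|$.

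The concluding step is a short counting argument. Being a graph, $\Gamma_p$ projects injectively to each factor, so $|\Gamma_p| \leq \min(|N^\sharp_p|,|K^\sharp_p|)$. Combined with the Lagrangian equality this forces $|N^\sharp_p| = |K^\sharp_p| = |\Gamma_p|$, so that $\gamma$ restricts to an isomorphism $N^\sharp_p \cong K^\sharp_p$, whence
\[
  \ell_p(N^\sharp) = \ell_p(K^\sharp_p) \leq \rank(K) = 8 - \rank(N).
\]
I do not anticipate a serious obstacle: everything rests on the gluing description, monotonicity of $p$-length under subquotients, and the Lagrangian/nondegeneracy input. The point meriting care is precisely this last counting step, which upgrades ``partial gluing at $p$'' to ``total gluing at $p$'' and explains why odd primes behave more rigidly than the prime~$2$.
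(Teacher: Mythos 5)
Your proof is correct. The first inequality is handled exactly as in the paper: both arguments realize \(N^\sharp\) as a subquotient of \(\bA_n^\sharp \oplus (\bA_n^\perp)(-1)^\sharp\) via \eqref{eq:L^sharp=Gamma^perp/Gamma} and then use monotonicity of \(\ell_p\) together with \(\ell_p(\bA_n^\sharp)\le 1\) and the rank bound on the orthogonal complement.

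For the second inequality you take a genuinely different, more hands-on route. The paper simply invokes the dual description \eqref{eq:(S^perp)^sharp=Xi^perp/Xi} (applied to the complementary embedding \(N^\perp \hookrightarrow \bL\)) to exhibit \(N^\sharp\) as a subquotient of \(\bL^\sharp \oplus (N^\perp)(-1)^\sharp\), and then uses \(\ell_p(\bL^\sharp)=0\) for odd \(p\); this is a one-line argument but leans on the second half of Nikulin's machinery. You instead stay with the gluing graph \(\Gamma \subset N^\sharp \oplus K(-1)^\sharp\) from \eqref{eq:L^sharp=Gamma^perp/Gamma} and run a counting argument: the vanishing of the odd part of \(\bL^\sharp \cong \Gamma^\perp/\Gamma\) forces \(\Gamma_p\) to be Lagrangian in the (nondegenerate) \(p\)-part, and since a graph injects into each factor this upgrades the gluing to a total isomorphism \(N^\sharp_p \cong K^\sharp_p\). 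This is correct --- in effect you re-derive the relevant case of \eqref{eq:(S^perp)^sharp=Xi^perp/Xi} at the prime \(p\) by hand --- and it yields slightly more than the paper states, namely an actual isomorphism of \(p\)-parts rather than just a length bound, at the cost of being longer. (Minor typo: the final display should read \(\ell_p(N^\sharp)=\ell_p(K^\sharp)\), not \(\ell_p(K^\sharp_p)\).)
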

\proof
Let \(B \coloneqq \bA_n^\perp \subset N\).
From the identification~\eqref{eq:L^sharp=Gamma^perp/Gamma} for the embedding \(\bA_n \hookrightarrow N\) and the fact that \(\bA_n^\sharp \cong \IZ/(n+1)\IZ\)
we see that
\[
    \ell_p(N^\sharp) \leq \ell_p(\bA_n^\sharp) + \ell_p(B^\sharp) \leq \ell_p(\bA_n^\sharp) + \rank(B) \leq \begin{cases} 0 + (\rank(N) - n) & \text{if \(p \nmid n+1\)} \\ 1 + (\rank(N) - n) & \text{if \(p \mid n+1\)}. \end{cases}
\]

Suppose now that \(p \neq 2\) and let \(M = N^\perp \subset \bL\). From the identification~\eqref{eq:(S^perp)^sharp=Xi^perp/Xi} for the embedding \(N \hookrightarrow \bL\) we obtain
\[
    \ell_p(N^\sharp) \leq \ell_p(\bL^\sharp) + \ell_p(M^\sharp) \leq 0 + \rank(M) = 8 - \rank(N). \qedhere 
\]
\endproof 

\begin{proposition} \label{prop:order.8}
If an isometry \(g \in \OO(\bL)\) of order~\(8\) is induced by a symplectic birational transformation, then \(\bL_g\) is isomorphic to \(\bD_5\), and condition~\eqref{eq:cond.det} holds.
\end{proposition}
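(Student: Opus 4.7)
The plan is to follow the same strategy as in Proposition~\ref{prop:order.4}, augmented by the order-\(8\) constraints. By Lemma~\ref{lem:coinvariant.negative.definite} together with Theorem~\ref{thm:bir.transformations}, \(\bL_g\) is negative definite of rank at most~\(5\); by Lemma~\ref{lem:old.prop:C} it is \(8\)-elementary; and by Corollary~\ref{prop:A3.if.m|4} it contains \(\bA_3\) as a primitive sublattice. Since the cyclotomic polynomial \(\Phi_8(x) = x^4 + 1\) divides the characteristic polynomial of \(g|_{\bL_g}\), we have \(\rank(\bL_g) \in \{4, 5\}\).

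I would rule out the rank-\(4\) case first. There \(\bL_g = \bL_{g^2}\), and Proposition~\ref{prop:order.4} leaves two possibilities, \(\bA_3 \oplus [-2]\) or \(\bD_4\). The first is excluded because any isometry of \(\bA_3 \oplus [-2]\) preserves each non-isometric summand, and the \([-2]\)-factor admits only \(\pm\id\), neither compatible with a fixed-point-free order-\(8\) action. For \(\bD_4\), the key observation is that the Weyl group \(W(\bD_4) \cong (\IZ/2)^3 \rtimes S_4\) — which coincides with the kernel of the natural map \(\OO(\bD_4) \to \OO(\bD_4^\sharp)\) — contains no element of order~\(8\). Hence any order-\(8\) isometry of \(\bD_4\) acts nontrivially on \(\bD_4^\sharp\). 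By Lemma~\ref{lem:cond.det}, condition~\eqref{eq:cond.det} cannot hold for the embedding \(\bD_4 \hookrightarrow \bL\); and since \(\bD_4\) satisfies the hypothesis of Lemma~\ref{lem:main.divisibility.lemma} (verified by direct inspection), this forces \(\bD_4 \cap \cW^\pex_{\OG6} \neq \emptyset\), contradicting Theorem~\ref{thm:bir.transformations}.

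In the rank-\(5\) case, the characteristic polynomial of \(g|_{\bL_g}\) factors as \(\Phi_8(x)\Phi_2(x)\), so \(\bL_{g^2}\) has rank~\(4\) and is primitively embedded in \(\bL_g\) with rank-\(1\) orthogonal spanned by a \(-1\)-eigenvector \(v\) of \(g\). The same reasoning applied to \(g|_{\bL_{g^2}}\) forces \(\bL_{g^2} \cong \bD_4\), so \(\bL_g\) is an \(8\)-elementary overlattice of \(\bD_4 \oplus [v^2]\) stable under \((g|_{\bD_4}) \oplus (-\id)\). Enumerating these overlattices for \(v^2 \in \{-2, -4, -8\}\), I would find four candidates: \(\bD_4 \oplus [-2]\), \(\bD_4 \oplus [-4]\), \(\bD_4 \oplus [-8]\), and \(\bD_5\) (the last arising as the index-\(2\) overlattice of \(\bD_4 \oplus [-4]\) obtained by gluing at the unique \(g\)-fixed isotropic element). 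The lattices \(\bD_4 \oplus [-2]\) and \(\bD_4 \oplus [-4]\) are ruled out by the same Lemma~\ref{lem:cond.det}/Lemma~\ref{lem:main.divisibility.lemma} mechanism as in the rank-\(4\) case: both satisfy the hypothesis of Lemma~\ref{lem:main.divisibility.lemma}, and the nontrivial action of \(g|_{\bD_4}\) on \(\bD_4^\sharp\) still induces a nontrivial action on the full discriminant group. The lattice \(\bD_4 \oplus [-8]\) falls outside the scope of Lemma~\ref{lem:main.divisibility.lemma} because the form on \([-8]^\sharp\) never attains the value \(1 \bmod 2\IZ\); it requires an ad hoc direct analysis showing that no \(g^\sharp\)-invariant gluing subgroup of \((\bD_4 \oplus [-8])^\sharp\) can simultaneously meet the compatibility constraints of a primitive embedding into \(\bL\) and keep the vectors of norm \(-4\) and divisibility~\(2\) in the \(\bD_4\) factor out of \(\cW^\pex_{\OG6}\).

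The only surviving option is \(\bL_g \cong \bD_5\), realized by the explicit isometry given in Table~\ref{tab:sp.OG6}, for which \(\bL^g \cong 2[2] \oplus [4]\); condition~\eqref{eq:cond.det} then reads \(|{\det(\bL^g)}| = 16 = 4 \cdot 4 = |{\det(\bL)} \cdot {\det(\bL_g)}|\). The main technical obstacle in carrying out this plan is the detailed rank-\(5\) enumeration, and in particular the ad hoc exclusion of \(\bD_4 \oplus [-8]\) for which neither Lemma~\ref{lem:main.divisibility.lemma} nor Lemma~\ref{lem:cond.det} directly applies.
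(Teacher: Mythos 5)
Your route is genuinely different from the paper's. The paper enumerates (by computer, via the algorithm of \autoref{subsec:m-elementary}) all \(8\)-elementary negative definite lattices \(N\) with \(\rank(N)\le 5\), \(|{\det(N)}|\le 512\) and primitive embeddings \(\bA_3\hookrightarrow N\hookrightarrow\bL\), finding only \(\bD_4\), \(\bD_5\) and \(\bD_4\oplus[-2]\); it then kills \(\bD_4\oplus[-2]\) via \autoref{lem:main.divisibility.lemma} and \(\bD_4\) by the nonexistence of an order-\(8\) isometry acting trivially on \(\bD_4^\sharp\). You instead pin down \(\bL_{g^2}\cong\bD_4\) using the factorization of the characteristic polynomial and classify the \(g\)-stable overlattices of \(\bD_4\oplus[v^2]\). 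This is more structural and largely hand-checkable, and the individual steps I can verify are correct: the irreducibility of \(\Phi_8\) does rule out \(\bA_3\oplus[-2]\); the kernel of \(\OO(\bD_4)\to\OO(\bD_4^\sharp)\) is indeed \(W(\bD_4)\), which has \(2\)-exponent \(4\); and your exclusions of \(\bD_4\oplus[-2]\) and \(\bD_4\oplus[-4]\) via \autoref{lem:main.divisibility.lemma} and \autoref{lem:cond.det} are sound (and have the virtue of not requiring you to decide whether these lattices embed into \(\bL\) at all). The price of your route is that it produces two candidates, \(\bD_4\oplus[-4]\) and \(\bD_4\oplus[-8]\), which the paper's computation never sees because they admit no primitive embedding into \(\bL\).

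That price is where the gap lies: the exclusion of \(\bD_4\oplus[-8]\) is not actually carried out. You correctly observe that \autoref{lem:main.divisibility.lemma} does not apply (the class of order \(4\) in \([-8]^\sharp\) has square \(3/2\) but is not of the form \(v/2\) for any \(v\) of norm \(-2\) or \(-4\)), and then you defer to an unspecified ``ad hoc direct analysis'' of gluing subgroups. As written, this is an acknowledged hole, and it is the one case in your argument that genuinely needs work. The cleanest way to close it is to show that \(\bD_4\oplus[-8]\) admits no primitive embedding into \(\bL\) whatsoever: for \(N=\bD_4\oplus[-8]\) the orthogonal complement \(M=N^\perp\subset\bL\) has rank \(3\), so \(\ell(M^\sharp)\le 3\), while \eqref{eq:(S^perp)^sharp=Xi^perp/Xi} computes \(M^\sharp\cong\Xi^\perp/\Xi\) with \(\Xi\) a \(2\)-elementary graph over a subgroup of \(\bL^\sharp\cong(\IZ/2\IZ)^2\) inside the length-\(5\) group \(\bL^\sharp\oplus N(-1)^\sharp\); running through the embedding indices \(k\in\{1,2,4\}\) (matching squares of order-\(2\) classes to rule out \(k=4\), and a length count for \(k=1,2\)) eliminates every case. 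The same check disposes of \(\bD_4\oplus[-4]\), although there your discriminant-action argument already suffices. Once this is supplied, your proof is complete and agrees with the paper's conclusion, including the verification \(|{\det(2[2]\oplus[4])}|=16=|{\det(\bL)}\cdot{\det(\bD_5)}|\).
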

\proof
Let \(N \coloneqq \bL_g\). By inspection of all cases in~\autoref{prop:order.4}, we see that there exists a primitive embedding \(\bA_3 \hookrightarrow \bL_{g^2}\). 
Hence, thanks to \autoref{thm:bir.transformations}, \autoref{lem:old.prop:C}, and the existence of \(\bL_{g^2} \hookrightarrow N\), we infer that \(N\) is an \(8\)-elementary negative definite lattice admitting primitive embeddings
\(\bA_3 \hookrightarrow N \hookrightarrow \bL\). In particular, \(\rank(N) \leq 5\). By \autoref{lem:IC.lemma} we have \(\ell_2(N^\sharp) \leq 3\), so \(|{\det(N)}| \leq 8^3 = 512.\)

A computation with the algorithm explained in \autoref{subsec:m-elementary} shows that \(N\) must be isomorphic to either \(\bD_4, \bD_5\) or \(\bD_4 \oplus [-2]\).
By \autoref{lem:main.divisibility.lemma} and condition~\eqref{eq:thm:bir.transformations}, we can exclude the case \(N \cong \bD_4 \oplus [-2]\). 
Finally, the restriction \(g' = g|_N\) satisfies \(|g'| = 8\), \(N^{g'} = 0\) and \((g')^\sharp = \id\). There is no such isometry \(g' \in \OO(\bD_4)\), hence only \(N \cong \bD_5\) is possible.
\endproof 

\begin{proposition} \label{prop:order.16}
An isometry \(g \in \OO(\bL)\) induced by a symplectic birational transformation cannot have order~\(16\).
\end{proposition}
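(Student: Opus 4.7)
The plan is to derive a contradiction from a purely rank-theoretic obstruction on the coinvariant sublattice, with no need for any explicit classification of isometries. By \autoref{thm:bir.transformations} the lattice \(\bL_g\) is negative definite, and since \(\bL\) has signature \((3,5)\) this forces \(\rank(\bL_g) \leq 5\).

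Assuming \(|g| = 16\), the first step is to verify that the restriction \(g' \coloneqq g|_{\bL_g}\) still has order equal to \(16\). This is because \(\bL^g \oplus \bL_g\) is a finite-index sublattice of \(\bL\) and \(g\) acts as the identity on \(\bL^g\); hence any power of \(g\) acting trivially on \(\bL_g\) acts trivially on \(\bL \otimes \IQ\), and therefore on \(\bL\) itself.

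The second and decisive step is a cyclotomic observation. Since \(g'\) has order exactly \(16\), its minimal polynomial over \(\IQ\) is a product of cyclotomic polynomials \(\Phi_d\) with \(d \mid 16\) whose least common multiple of indices is \(16\). As the \(\lcm\) of any subset of \(\{1,2,4,8\}\) is at most \(8\), the factor \(\Phi_{16}\) must appear. Consequently \(\Phi_{16}\) divides the characteristic polynomial of \(g'\), and we obtain
\[
    \phi(16) = 8 \leq \rank(\bL_g) \leq 5,
\]
a contradiction.

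I do not expect any significant obstacle in this argument; the only small subtlety is the order-preservation statement for the restriction \(g'\), which is immediate from the rational decomposition \(\bL \otimes \IQ = (\bL^g \otimes \IQ) \oplus (\bL_g \otimes \IQ)\). As an alternative route more in line with the structural development of the previous subsections, one could apply \autoref{prop:order.8} to \(g^2\) to obtain \(\bL_{g^2} \cong \bD_5\), embed this primitively into \(\bL_g\) to conclude \(\bL_g \cong \bD_5\), and then exclude an order-\(16\) isometry of \(\bD_5\) by the same cyclotomic count; however this detour is unnecessary.
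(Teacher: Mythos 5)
Your argument is correct, and it is genuinely different from (and more elementary than) the paper's proof. The paper proceeds through its structural cascade: it invokes \autoref{prop:order.8} to get \(\bL_{g^2} \cong \bD_5\), notes that \(\bL_{g^2}\) embeds into the negative definite lattice \(\bL_g\) of rank at most \(5\), concludes \(\bL_g \cong \bD_5\), and then checks that \(\bD_5\) admits no isometry of order \(16\) --- exactly the ``detour'' you mention at the end. Your main argument instead needs only that \(\bL_g\) is negative definite (from \autoref{thm:bir.transformations}), hence of rank at most \(5\), together with the cyclotomic observation that an isometry of order exactly \(16\) must have \(\Phi_{16}\) in its minimal polynomial, forcing \(\varphi(16) = 8 \leq \rank(\bL_g)\); your justification that the restriction \(g' = g|_{\bL_g}\) retains the full order via the finite-index decomposition \(\bL^g \oplus \bL_g \subset \bL\) is sound. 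This is the natural prime-power generalization of the paper's own \autoref{lem:prime.ord}, and it buys independence from the chain \autoref{prop:order.2} \(\to\) \autoref{prop:order.4} \(\to\) \autoref{prop:order.8}; it would also dispose of orders \(9\) and \(25\) (\(\varphi(9) = 6\), \(\varphi(25) = 20\)) without the computations the paper performs there. What the paper's route buys in exchange is the explicit identification \(\bL_g \cong \bD_5\), consistent with how the surrounding propositions are organized, though that identification is not reused afterwards.
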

\proof
Suppose such an isometry \(g\) exists.
Then, \(\bL_{g^2} \cong \bD_5\) by \autoref{prop:order.8}. 
Now, \(\bL_{g^2}\) embeds primitively into \(\bL_g\) and \(\bL_g\) is negative definite. Therefore,
\[
    5 = \rank(\bL_{g^2}) \leq \rank(\bL_g) \leq 5,
\] 
which forces \(\rank(\bL_g) = 5\) and \(\bL_g \cong \bL_{g^2} \cong \bD_5\). But \(\bD_5\) admits no isometry of order \(16\), hence such \(g\) cannot exist.
\endproof 

\subsection{Proof of \autoref{thm:bir.trivial.action}} \label{proof:thm:bir.trivial.action}
Let \(g \in \OO(\bL)\) be induced by a symplectic birational transformation and suppose that \(g^\sharp \neq \id \).
Thanks to \autoref{lem:|O(L^sharp)|=2}, up to passing to an odd power of \(g\) we can suppose without loss of generality that \(|g| = 2^k\) for some \(k \in \IN\).

From \autoref{prop:order.16} it follows that \(k \leq 3\). Therefore, condition~\eqref{eq:cond.det} holds because of \autoref{prop:order.2}, \ref{prop:order.4} or \ref{prop:order.8}. Consequently, it must be \(g^\sharp = \id\) by \autoref{lem:cond.det}, contradicting the assumption. \qed

\subsection{Other orders} \label{subsec:other.orders}
\autoref{thm:bir.trivial.action} and \autoref{lem:coinv.m-elementary} imply together that \(\bL_g\) is \(|g|\)-elementary if \(g\) is induced by a symplectic birational transformation, a fact which we will not repeat in the rest of the paper.
In the next proof, the following lattice appears:
\[
 \btA = {\small \begin{pmatrix} -2 & 1 & -1 & -1 \\
1 & -2 & 1 & 1 \\
-1 & 1 & -2 & 0 \\
-1 & 1 & 0 & -4 \end{pmatrix}}.
\]

\begin{proposition} \label{prop:order.6}
If an isometry \(g \in \OO(\bL)\) of order~\(6\) is induced by a symplectic birational transformation, 
then \(\bL_g\) is isomorphic to one of the following lattices:
\(
    \bA_2 \oplus [-2], \bD_4, \bA_2 \oplus 2[-2], 2\bA_2 \oplus [-2].
\)
Moreover, condition~\eqref{eq:cond.det} holds.
\end{proposition}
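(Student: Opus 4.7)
The plan is to mirror the strategy used for orders~\(4\) and~\(8\). Set \(N \coloneqq \bL_g\). By \autoref{thm:bir.transformations}, \(N\) is negative definite, so \(\rank(N) \leq 5\); by \autoref{thm:bir.trivial.action} and \autoref{lem:coinv.m-elementary}, \(N\) is \(6\)-elementary. Since \(|g| = 6\), both \(g^2\) (of order~\(3\)) and \(g^3\) (of order~\(2\)) act non-trivially, so \autoref{prop:order.3} and \autoref{prop:order.2} give \(\bL_{g^2} \in \set{\bA_2, 2\bA_2}\) and \(\bL_{g^3} \in \set{[-2], 2[-2], 3[-2], \bD_4}\), each of which embeds primitively into~\(N\) (being primitive in \(\bL\) and contained in the primitive sublattice~\(N\)). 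In particular, \(\bA_2 \hookrightarrow N\) primitively.

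Applying \autoref{lem:IC.lemma} to \(\bA_2 \hookrightarrow N \hookrightarrow \bL\) yields \(\ell_2(N^\sharp) \leq \rank(N) - 2\) and \(\ell_3(N^\sharp) \leq \min(\rank(N)-1, 8-\rank(N))\). Combined with \eqref{eq:bound.m-elementary}, this bounds \(|{\det(N)}|\), and the algorithm of \autoref{subsec:m-elementary} produces a short finite list of \(6\)-elementary negative definite lattices of rank at most \(5\). We immediately discard from it those not admitting a primitive embedding into \(\bL\), as well as those not containing both \(\bL_{g^2}\) and \(\bL_{g^3}\) primitively for some admissible pair of choices.

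For each surviving candidate, we verify the hypothesis of \autoref{lem:main.divisibility.lemma} by inspection, so that condition~\eqref{eq:thm:bir.transformations} of \autoref{thm:bir.transformations} forces equation~\eqref{eq:cond.det} and further trims the list. The final step is to check the existence of an isometry \(g' \in \OO(N)\) of order~\(6\) with \(N^{g'} = 0\) and \((g')^\sharp = \id\): for \(\bA_2 \oplus k[-2]\) and \(2\bA_2 \oplus [-2]\) we build \(g'\) block-diagonally from the standard order-\(3\) rotation of each \(\bA_2\) and the involution \(-\id\) on each \([-2]\); for \(\bD_4\) we invoke the Coxeter element, whose order equals the Coxeter number \(h = 6\) and which acts without fixed vectors.

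The main anticipated difficulty lies in the enumeration step, where we must carefully rule out candidates such as \(\bA_2 \oplus 3[-2]\), \(\bA_2 \oplus \bD_4\) or \(2\bA_2 \oplus 2[-2]\). These should fail either because the relevant primitive embedding into \(\bL\) violates~\eqref{eq:cond.det} (so that a vector of \(\cW_{\OG6}^\pex\) appears in \(N\)), or because they carry no order-\(6\) isometry acting trivially on the discriminant group with no invariants. We expect only \(\bA_2 \oplus [-2]\), \(\bD_4\), \(\bA_2 \oplus 2[-2]\) and \(2\bA_2 \oplus [-2]\) to survive all three filters, yielding the statement.
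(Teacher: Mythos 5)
Your proposal is correct and follows essentially the same route as the paper: reduce to a finite list of \(6\)-elementary negative definite lattices of rank at most \(5\) containing \(\bA_2\) primitively (via \(g^2\) and \autoref{lem:IC.lemma}), then cut it down using \autoref{lem:main.divisibility.lemma} to force~\eqref{eq:cond.det} and the existence of an order-\(6\) isometry \(g'\) with \(N^{g'}=0\) and trivial discriminant action. The only cosmetic difference is the order of the filters: the paper first exploits that \(3\nmid\det(\bL)\) to force \(g'\) to act trivially on the \(3\)-part of \(N^\sharp\) and only afterwards invokes the divisibility lemma, whereas you (harmlessly) add the \(g^3\)-filter and the explicit Coxeter/block-diagonal constructions, which are not needed for this implication.
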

\proof
Let \(N \coloneqq \bL_g\). By~\autoref{prop:order.3}, it holds \(\bL_{g^2} \cong \bA_2\) or \(\bL_{g^2} \cong 2\bA_2\). Since \(\bL_{g^2} \hookrightarrow \bL_g\), in both cases there must exist a (necessarily primitive) embedding \(\bA_2 \hookrightarrow N\).
Hence, it holds \(\ell_2(N^\sharp) \leq 3\) and \(\ell_3(N^\sharp) \leq 3\) by \autoref{lem:IC.lemma} and the fact that \(\rank(N) \leq 5\) because \(N\) is negative definite; consequently, \(|{\det(N)}| \leq 6^3 = 216\). A computation shows that \(N\) must be isomorphic to one of the following lattices:
\begin{gather*}
    \bA_2, \bA_2 \oplus [-2], \bA_2 \oplus [-6], \bD_4, 2\bA_2, \btA, \bA_2 \oplus 2[-2], \bA_2 \oplus \bA_2(2), \bA_2 \oplus [-2] \oplus [-6], \\ \bA_2 \oplus 2[-6],
     \bD_5, \bD_4 \oplus [-2], 2\bA_2 \oplus [-2], \btA \oplus [-2], \bA_2 \oplus 3[-2], \bD_4 \oplus [-6], 2\bA_2 \oplus [-6], \\
     \bA_2 \oplus 2[-2] \oplus [-6], \btA \oplus [-6],
     \bA_2 \oplus \bA_2(2) \oplus [-2],
     \bA_2 \oplus [-2] \oplus [-6], \bA_2 \oplus \bA_2(2) \oplus [-6].
\end{gather*}

Since \(3\) does not divide \(\det(\bL)\), all elements of order \(3\) in \(N^\sharp\) must belong to the gluing subgroup \(H\) of \(N \hookrightarrow \bL\). Therefore, the restriction \(g' = g|_N \in \OO(N)\) satisfies \(|g'| = 6\), \(N^{g'} = 0\), and induces the identity on the elements of order \(3\) in \(N^\sharp\).
By inspection we see that only the following lattices admit such an isometry~\(h\):
\[
\bA_2 \oplus [-2], \bD_4, \bA_2 \oplus 2[-2], \bD_5, \bD_4 \oplus [-2], \bA_2 \oplus [-2].
\]

\autoref{lem:main.divisibility.lemma} implies that condition \eqref{eq:cond.det} must hold in order for \(\cW^\pex_{\OG6} \cap N = \emptyset\) to be true, hence \((g')^\sharp = \id\) because of \autoref{lem:cond.det}. This excludes \(\bD_5\) and \(\bD_4 \oplus [-2]\).
\endproof

\begin{proposition} \label{prop:order.9}
An isometry \(g \in \OO(\bL)\) induced by a symplectic birational transformation cannot have order~\(9\).
\end{proposition}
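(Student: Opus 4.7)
The natural approach is to argue by contradiction, deriving a rank estimate for $\bL_g$ that is incompatible with $\bL_g$ being negative definite. Suppose $g \in \OO(\bL)$ has order $9$ and is induced by a symplectic birational transformation of a manifold of type~$\OG6$. Then by \autoref{thm:bir.transformations} the coinvariant sublattice $\bL_g$ is negative definite, and since $\sign(\bL) = (3,5)$ this forces $\rank(\bL_g) \leq 5$.

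The first key step is to observe that the restriction $g' = g|_{\bL_g}$ has the same order as $g$, namely $9$: since $g$ acts trivially on $\bL^g$, any power of $g'$ equal to the identity would force the same power of $g$ to fix the finite-index sublattice $\bL^g \oplus \bL_g \subset \bL$, and hence all of $\bL$. The second key step is to exploit the fact that $g'$ has order $9$ and no nonzero fixed vectors on $\bL_g \otimes \IC$, so that some primitive $9$-th root of unity must occur as an eigenvalue of $g'$. Since $g'$ is defined over $\IZ$, its minimal polynomial has integer coefficients, so the $9$-th cyclotomic polynomial $\Phi_9(x) = x^6 + x^3 + 1$ divides it. This gives $\rank(\bL_g) \geq \deg \Phi_9 = 6$, the desired contradiction.

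I do not anticipate any serious obstacle: the argument runs parallel to the proof of \autoref{prop:order.16}, and is in fact shorter since one need not invoke a previous classification of the coinvariant lattice as in the order $8$ case. As a sanity check, the same cyclotomic reasoning immediately rules out every $m$ with $\phi(m) > 5$, such as $m \in \set{7, 11, 13, 14, 15}$, which is consistent with the list of admissible orders in~\eqref{eq:possible.orders}.
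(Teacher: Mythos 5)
Your proof is correct, but it takes a genuinely different route from the paper. You argue purely with cyclotomic polynomials: since \(g' = g|_{\bL_g}\) has order exactly \(9\) (your finite-index argument for this is right), and since an integral automorphism of order \(9\) must have a primitive \(9\)th root of unity as an eigenvalue (otherwise all eigenvalues would have order dividing \(3\) and \(g'\), being diagonalizable, would have order dividing \(3\)), the characteristic polynomial of \(g'\) is divisible by \(\Phi_9\), forcing \(\rank(\bL_g) \geq \varphi(9) = 6 > 5\). The paper instead stays inside its computational framework: it shows \(\bL_g\) is a \(9\)-elementary negative definite lattice admitting primitive embeddings \(\bA_2 \hookrightarrow \bL_g \hookrightarrow \bL\), bounds \(|{\det(\bL_g)}| \leq 729\) via \autoref{lem:IC.lemma}, enumerates the candidates (\(\bA_2\), \(2\bA_2\), \(\bA_2 \oplus \bA_2(3)\)), and checks that none admits an order-\(9\) isometry. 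Your argument is shorter, requires no lattice enumeration, and is really just the prime-power extension of the paper's own \autoref{lem:prime.ord}; the paper's argument has the advantage of uniformity with the other composite-order cases. One caveat on your closing remark: for \(m\) that is not a prime power, order exactly \(m\) does \emph{not} force a primitive \(m\)th root of unity to appear as an eigenvalue (e.g.\ eigenvalues of orders \(3\) and \(5\) already give order \(15\)), so ``\(\varphi(m) > 5\) immediately rules out \(m\)'' is not literally correct as stated; the right bound is \(\rank(\bL_g) \geq \min_S \sum_{d \in S} \varphi(d)\) over sets \(S\) of divisors \(d > 1\) of \(m\) with \(\lcm(S) = m\). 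This still disposes of \(m = 14, 15\) (giving \(7\) and \(6\) respectively), and it does not affect the order-\(9\) case, where \(9\) is a prime power and your argument is airtight.
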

\proof 
By looking at \(g^3\) and \autoref{prop:order.3}, we see that \(N \coloneqq \bL_g\) is a \(9\)-elementary negative definite lattice admitting primitive embeddings \(\bA_2 \hookrightarrow N \hookrightarrow \bL\).
Hence, it holds \(\ell_3(N^\sharp) \leq 3\) by \autoref{lem:IC.lemma}; consequently, \(|{\det(N)}| \leq 9^3 = 729\).
We compute that \(N\) can be either \(\bA_2, 2\bA_2\) or \(\bA_2 \oplus \bA_2(3)\), but none of these lattices admits an isometry of order~\(9\).
\endproof 

\begin{proposition} \label{prop:order.10}
If an isometry \(g \in \OO(\bL)\) of order~\(10\) is induced by a symplectic birational transformation, then \(\bL_g\) is isomorphic to \(\bA_4 \oplus [-2]\), and condition~\eqref{eq:cond.det} holds.
\end{proposition}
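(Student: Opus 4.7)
The plan is to follow the same strategy as in the propositions for orders $6$ and $9$. Let $N \coloneqq \bL_g$. Combining \autoref{thm:bir.trivial.action} with \autoref{lem:coinv.m-elementary}, the lattice $N$ is $10$-elementary, and by \autoref{thm:bir.transformations} it is negative definite, so $\rank(N) \leq 5$. Since the power $g^2$ has order $5$ and is induced by a symplectic birational transformation, \autoref{prop:order.5} gives $\bL_{g^2} \cong \bA_4$, which embeds primitively into $N$; hence $\rank(N) \in \set{4,5}$. An application of \autoref{lem:IC.lemma} with $n = 4$ then yields $\ell_2(N^\sharp) \leq \rank(N) - 4 \leq 1$ (since $2 \nmid 5$) and $\ell_5(N^\sharp) \leq \rank(N) - 3 \leq 2$ (since $5 \mid 5$), so $|\det(N)| \leq 50$.

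Using the enumeration algorithm of \autoref{subsec:m-elementary}, I would list all $10$-elementary negative definite lattices of rank $4$ or $5$ with $|\det(N)| \leq 50$ admitting primitive embeddings $\bA_4 \hookrightarrow N \hookrightarrow \bL$. The expected short list contains $\bA_4$ (the only rank-$4$ possibility, as $\bA_4$ has no nontrivial even overlattices) and $\bA_4 \oplus [-2]$, together with at most a few other rank-$5$ lattices such as $\bA_4 \oplus [-10]$.

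For each candidate, the restriction $g' \coloneqq g|_N$ must be an order $10$ isometry with $N^{g'} = 0$. Where the hypothesis of \autoref{lem:main.divisibility.lemma} is satisfied, that lemma combined with \autoref{thm:bir.transformations} forces condition~\eqref{eq:cond.det}, and hence $(g')^\sharp = \id$ by \autoref{lem:cond.det}. The main obstacle is excluding $N \cong \bA_4$: here the hypothesis of \autoref{lem:main.divisibility.lemma} is vacuous, since no nonzero element of $\bA_4^\sharp \cong \IZ/5\IZ$ has square in $\set{3/2, 1} \bmod 2\IZ$, so condition~\eqref{eq:cond.det} is equivalent to $\bA_4 \cap \cW_{\OG6}^\pex = \emptyset$. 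However, every order $10$ element of $\OO(\bA_4)$ with trivial invariant sublattice has the form $c \cdot (-\id)$ for a Coxeter element $c \in W(\bA_4)$, and such an element induces $-\id$ on $\bA_4^\sharp$, because Weyl group elements act trivially on the discriminant. Hence $(g')^\sharp \neq \id$, contradicting \autoref{lem:cond.det}, so this case is impossible. The remaining rank-$5$ candidates are ruled out by an analogous argument: any order $10$ isometry of such an $N$ preserves the orthogonal decomposition $\bA_4 \perp [-d]$ (as $\bA_4^\perp \subset N$ is intrinsic), and a case analysis of the induced action on the two discriminant factors shows that either $(g')^\sharp \neq \id$ or the gluing index required by the fixed locus of $(g')^\sharp$ is numerically incompatible with $|\det(\bL)| = 4$ in the formula of \eqref{eq:gluing.embedding.index}. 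Finally, one verifies directly that $\bA_4 \oplus [-2]$ admits a primitive embedding into $\bL$ and an order $10$ isometry with trivial invariant sublattice and trivial induced action on the discriminant, yielding condition~\eqref{eq:cond.det} and completing the proof.
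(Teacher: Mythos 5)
Your proposal is correct and follows essentially the same route as the paper: reduce to a \(10\)-elementary negative definite lattice with primitive embeddings \(\bA_4 \hookrightarrow N \hookrightarrow \bL\), bound \(|{\det(N)}| \leq 50\) via \autoref{lem:IC.lemma}, arrive at the candidates \(\bA_4\), \(\bA_4\oplus[-2]\), \(\bA_4\oplus[-10]\), and eliminate the first and third by the non-existence of a suitable order-\(10\) isometry. Your conceptual exclusion of \(\bA_4\) (every order-\(10\) element of \(\OO(\bA_4)\) is \(c\cdot(-\id)\) with \(c\) a Coxeter element, hence acts as \(-\id \neq \id\) on \(\bA_4^\sharp \cong \IZ/5\IZ\)) replaces the paper's bare appeal to a computation, and your somewhat loosely stated rank-\(5\) exclusion matches the paper's observation that \(g' = g|_N\) must act trivially on the \(5\)-torsion of \(N^\sharp\) because \(5 \nmid \det(\bL)\), which rules out the \(-\id\) action on the \(\IZ/10\IZ\) factor of \((\bA_4\oplus[-10])^\sharp\).
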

\proof 
Arguing as above, \(N \coloneqq \bL_g\) must be a \(10\)-elementary negative definite lattice admitting primitive embeddings \(\bA_4 \hookrightarrow N \hookrightarrow L\). 
Thus, if \(\rank(N) = 4\) then \(N \cong \bA_4\), but this lattice does not have any isometry \(g \in \OO(N)\) of order~\(10\) acting trivially on the discriminant group such that \(N^{h} = 0\). 

Suppose that \(\rank(N) = 5\). By \autoref{lem:IC.lemma} it holds \(\ell_2(N^\sharp) \leq 1\) and \(\ell_5(N^\sharp) \leq 2\); consequently,  \(|{\det(N)}| \leq 2 \cdot 5^2 = 50\). 
We compute that \(N \cong \bA_4 \oplus [-2]\) or \(N \cong \bA_4 \oplus [-10]\), but the second lattice does not have admit any isometry \(g' \in \OO(N)\) of order \(10\) acting trivially on the elements of \(N^\sharp\) of order \(5\) such that \(N^{g'} = 0\). 
\endproof 

\begin{proposition} \label{prop:order.12}
If an isometry \(g \in \OO(\bL)\) of order~\(12\) is induced by a symplectic birational transformation, then \(\bL_g\) is isomorphic to \(\bD_5\), and condition~\eqref{eq:cond.det} holds.
\end{proposition}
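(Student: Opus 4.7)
Let $N := \bL_g$. By Theorem \ref{thm:bir.trivial.action} and Lemma \ref{lem:coinv.m-elementary}, $N$ is $12$-elementary, and by Theorem \ref{thm:bir.transformations} it is negative definite, hence of rank at most $5$. Inspecting Proposition \ref{prop:order.4} shows that $\bA_3$ embeds primitively into $\bL_{g^3}$ in each of the four possible cases; since $\bL_{g^3}\hookrightarrow N$ is primitive, we obtain a primitive embedding $\bA_3\hookrightarrow N\hookrightarrow\bL$. Lemma \ref{lem:IC.lemma} then yields $\ell_2(N^\sharp)\leq\rank(N)-2$ and $\ell_3(N^\sharp)\leq\min(\rank(N)-3,\,8-\rank(N))$, whence $|{\det(N)}|\leq 576$.

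The strategy is to enumerate the remaining candidates for $N$ by means of the algorithm of Section \ref{subsec:m-elementary} and to exclude all but $\bD_5$. Rank $3$ forces $N\cong\bA_3$, which has no isometry of order $12$; for each rank $4$ candidate (essentially $\bD_4$ and $\bA_3\oplus[-2]$) a direct inspection of $\OO(N)$ shows there is no order $12$ element acting trivially on $N^\sharp$. For every rank $5$ candidate I would apply Lemma \ref{lem:main.divisibility.lemma} together with Theorem \ref{thm:bir.transformations} to detect $\pex$-walls, and use Proposition \ref{prop:order.3} to constrain $\bL_{g^4}\subset N$ to be either $\bA_2$ or $2\bA_2$, noting that $2\bA_2$ does not embed primitively into $\bD_5$ for determinant reasons.

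Only $N\cong\bD_5$ survives. The explicit matrix in Table \ref{tab:sp.OG6} exhibits an order $12$ isometry $g'\in\OO(\bD_5)$ with $N^{g'}=0$ and $(g')^\sharp=\id$, and condition \eqref{eq:cond.det} then follows from Lemma \ref{lem:cond.det}. The main obstacle is the rank $5$ enumeration: since the characteristic polynomial of $g$ need not have $\Phi_{12}$ as a factor (because $\lcm(4,3)$ and $\lcm(4,6)$ both equal $12$), a priori more shapes of $g$ have to be considered than in the prime-power setting of Propositions \ref{prop:order.8} and \ref{prop:order.16}, and the simultaneously valid classifications of $\bL_{g^2},\bL_{g^3},\bL_{g^4},\bL_{g^6}$ must be tracked carefully.
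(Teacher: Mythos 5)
Your proposal follows essentially the same route as the paper's proof: pass to \(g^3\) to get a primitive embedding \(\bA_3 \hookrightarrow \bL_g \hookrightarrow \bL\) via \autoref{prop:order.4}, bound \(|{\det(\bL_g)}| \leq 576\) with \autoref{lem:IC.lemma}, enumerate the \(12\)-elementary negative definite candidates, and eliminate all but \(\bD_5\) by demanding a fixed-point-free order-\(12\) isometry that is trivial on the discriminant group. One slip at the end: condition~\eqref{eq:cond.det} does not ``follow from \autoref{lem:cond.det}'' --- that lemma gives the reverse implication (the determinant equality implies \((g')^\sharp = \id\)). The equality is instead forced by \autoref{lem:main.divisibility.lemma} together with condition~\eqref{eq:thm:bir.transformations} of \autoref{thm:bir.transformations}, since \(\bD_5\) satisfies the hypothesis of that lemma; the same mechanism is what entitles you to require triviality on \(N^\sharp\) when discarding \(\bD_4\) (note that \(\OO(\bD_4)\cong W(\mathrm{F}_4)\) does contain order-\(12\) elements, e.g.\ the Coxeter element, just none in the kernel of \(\OO(\bD_4)\to\OO(\bD_4^\sharp)\)), so you should invoke it for the rank-\(4\) candidates as well, not only the rank-\(5\) ones.
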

\proof 
Let \(N \coloneqq \bL_g\), which is a \(12\)-elementary negative definite lattice.
Looking at \(g^3\), we see that there exists primitive embeddings \(\bA_3 \hookrightarrow N\) (by inspection of the cases in \autoref{prop:order.4}). 
We infer from \autoref{lem:IC.lemma} that \(\ell_2(N^\sharp) \leq 3\) and \(\ell_3(N^\sharp) \leq 2\), so \(|{\det(N)}| \leq 4^3 \cdot 3^2 = 576\). Therefore, \(N\) is isomorphic to one of the following lattices:
\begin{gather*}
\bA_3 \oplus [-2], \bD_5, \bA_5, \bD_4 \oplus [-2], \bA_3 \oplus \bA_2, \bA_3 \oplus 2[-2], \btA \oplus [-2], \bA_3 \oplus [-2] \oplus [-4], \\ 
\bA_3 \oplus [-2] \oplus [-6], \bA_3 \oplus [-2] \oplus [-12].
\end{gather*}
Since there must exist \(g' \in \OO(N)\) with \(|g'| = 12\), we see that \(N\) is isomorphic to either \(\bD_5\) or \(\bD_4 \oplus [-2]\). But then condition \eqref{eq:cond.det} must hold because of \autoref{lem:main.divisibility.lemma}, so it should also be \((g')^\sharp = \id\). Such an isometry \(g'\) exists only if \(N \cong \bD_5\).
\endproof 

\begin{proposition} \label{prop:order.15.25}
An isometry \(g \in \OO(\bL)\) induced by a symplectic birational transformation cannot have order~\(15\) or \(25\).
\end{proposition}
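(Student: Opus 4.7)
The plan is to derive a contradiction directly from the rank bound $\rank(\bL_g) \leq 5$ by analyzing which cyclotomic polynomials must divide the minimal polynomial of $g$ on $\bL_g \otimes \IQ$. By \autoref{thm:bir.transformations}, the coinvariant lattice $\bL_g$ of any isometry induced by a symplectic birational transformation is negative definite; since $\bL$ has signature $(3,5)$, this forces $\rank(\bL_g) \leq 5$. I would then show that no isometry of order $15$ or $25$ without invariant vectors can act on a lattice of such small rank.

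First I would verify that the restriction $g' = g|_{\bL_g}$ still has order exactly $m = |g|$. This follows from the fact that $g$ acts as the identity on $\bL^g$, so any power trivial on $\bL_g$ would be trivial on the finite-index sublattice $\bL^g \oplus \bL_g \subset \bL$, and hence on $\bL$ itself. Since $\bL_g \otimes \IQ$ is precisely the sum of eigenspaces of $g$ for nontrivial eigenvalues, the minimal polynomial of $g'$ is a product of cyclotomic polynomials $\Phi_d(x)$ with $d \mid m$ and $d > 1$, and the $\lcm$ of these $d$'s must equal $m$. This translates into a lower bound on $\rank(\bL_g)$ given by the sum of $\phi(d) = \deg \Phi_d$ over the cyclotomics forced to appear.

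For $m = 25$, the only divisors of $25$ greater than $1$ are $5$ and $25$, so $\Phi_{25}$ must appear in order to achieve $\lcm = 25$; this forces $\rank(\bL_g) \geq \phi(25) = 20$. For $m = 15$, either $\Phi_{15}$ appears, giving $\rank(\bL_g) \geq \phi(15) = 8$, or both $\Phi_3$ and $\Phi_5$ appear, giving $\rank(\bL_g) \geq \phi(3) + \phi(5) = 2 + 4 = 6$. In every scenario, $\rank(\bL_g) \geq 6$, contradicting $\rank(\bL_g) \leq 5$.

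No significant obstacle is expected: the argument is purely linear-algebraic once the rank bound from \autoref{thm:bir.transformations} is in hand. An alternative route would be to invoke \autoref{prop:order.3} and \autoref{prop:order.5} applied to $g^5$ and $g^3$ to deduce that $\bL_g$ must contain copies of $\bA_2$ and $\bA_4$, and then show these two sublattices together force $\rank(\bL_g) \geq 6$; however, checking their near-orthogonality inside $\bL_g$ requires the very eigenspace decomposition that already makes the cyclotomic polynomial argument immediate, so the direct approach seems preferable.
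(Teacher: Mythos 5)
Your proof is correct, and it takes a genuinely different route from the paper. The paper's argument is part of its uniform machinery for composite orders: it invokes \autoref{prop:order.5} to get a primitive embedding \(\bA_4 \hookrightarrow \bL_g\), uses the \(m\)-elementary property of \(\bL_g\) (which rests on \autoref{thm:bir.trivial.action}) together with \autoref{lem:Trieste} to force \(\rank(\bL_g) = 4\), concludes \(\bL_g \cong \bA_4\), and finishes by noting that \(\OO(\bA_4) \cong S_5 \times \set{\pm 1}\) contains no element of order \(15\) or \(25\). Your cyclotomic-polynomial count bypasses all of this: it needs only the negative definiteness of \(\bL_g\) from \autoref{thm:bir.transformations} (hence \(\rank(\bL_g) \leq 5\)), the standard fact that \(g' = g|_{\bL_g}\) retains the full order \(m\) and has no nonzero fixed vectors, and the observation that the set \(S\) of \(d > 1\) with \(\Phi_d\) dividing the characteristic polynomial of \(g'\) must satisfy \(\lcm(S) = m\), giving \(\rank(\bL_g) \geq \sum_{d \in S}\phi(d) \geq 6\) for \(m = 15\) and \(\geq 20\) for \(m = 25\). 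All your numerical claims check out (\(\phi(25) = 20\), \(\phi(15) = 8\), \(\phi(3)+\phi(5) = 6\)), and the step showing \(|g'| = |g|\) via the finite-index sublattice \(\bL^g \oplus \bL_g \subset \bL\) is sound. What your approach buys is logical independence from the earlier classification results and from \autoref{thm:bir.trivial.action}, and in fact it is the natural generalization of the eigenspace argument the paper itself uses in \autoref{lem:prime.ord} and \autoref{lem:p=2,3,5}; what the paper's approach buys is consistency with the template it applies to every other composite order, where the lattice-theoretic data (not just the rank) is actually needed.
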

\proof
Let \(m \in \{15, 25\}\).
The coinvariant sublattice \(N \coloneqq \bL_g\) is an \(m\)-elementary negative definite lattice admitting primitive embeddings \(\bA_4 \hookrightarrow N \hookrightarrow \bL\) by \autoref{prop:order.5}. Since \(m\) is odd, we have necessarily \(\rank(N) = 4\) by \autoref{lem:Trieste}, hence \(N \cong \bA_4\), but there is no isometry in \(\OO(\bA_4)\) of order \(m\).
\endproof 

\begin{proposition} \label{prop:order.20.24}
An isometry \(g \in \OO(\bL)\) induced by a symplectic birational transformation cannot have order~\(20\) or \(24\).
\end{proposition}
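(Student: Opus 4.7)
The plan is to leverage the identity \(\phi(20) = \phi(24) = 8\), where \(\phi\) denotes Euler's totient function, in combination with the negative-definiteness of \(\bL_g\) imposed by \autoref{thm:bir.transformations}. Since \(\bL_g\) is negative definite and \(\bL\) has signature \((3,5)\), the rank of \(\bL_g\) is at most~\(5\).

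Suppose \(g\) has order \(m \in \{20, 24\}\). First I would verify that the restriction \(g' \coloneqq g|_{\bL_g} \in \OO(\bL_g)\) has order exactly~\(m\): this is because \(g\) acts trivially on \(\bL^g\), so the order of \(g\) coincides with the order of its action on \(\bL \otimes \IQ / (\bL^g \otimes \IQ) \cong \bL_g \otimes \IQ\). Consequently, the minimal polynomial of \(g'\) over \(\IQ\) is divisible by the \(m\)th cyclotomic polynomial \(\Phi_m\), and therefore so is the characteristic polynomial of \(g'\). This forces \(\rank(\bL_g) \geq \deg \Phi_m = \phi(m) = 8\), contradicting \(\rank(\bL_g) \leq 5\).

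The main obstacle --- really a rather minor one --- is the verification that the order of \(g\) is preserved upon restriction to \(\bL_g\); once that is in hand, only a one-line degree count remains. As an alternative, more in line with the style of \autoref{prop:order.16}, one could handle order \(24\) by observing that \(\bL_{g^3} \cong \bD_5\) by \autoref{prop:order.8}, which primitively embeds into \(\bL_g\) and forces \(\bL_g \cong \bD_5\) for rank reasons; since \(\bD_5\) admits no isometry of order~\(24\), this yields the desired contradiction. The case of order~\(20\) could be treated analogously starting from \(\bL_{g^4} \cong \bA_4\) (\autoref{prop:order.5}), but the analysis becomes less uniform, so the cyclotomic approach seems preferable.
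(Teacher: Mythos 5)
Your main argument contains a false step: for an isometry \(g'\) of finite order \(m\) on a \(\IQ\)-vector space, the \(m\)th cyclotomic polynomial \(\Phi_m\) need \emph{not} divide the minimal (or characteristic) polynomial of \(g'\) unless \(m\) is a prime power. The order of a finite-order isometry is the least common multiple of the orders of its eigenvalues, so for \(m=20\) the block sum of the companion matrices of \(\Phi_4\) and \(\Phi_5\) already has order \(20\) in rank \(\phi(4)+\phi(5)=6\), with no primitive \(20\)th root of unity among its eigenvalues. Hence the inequality \(\rank(\bL_g)\geq\phi(m)=8\) is not justified. The argument is, however, repairable: the characteristic polynomial of \(g'\) on \(\bL_g\) is a product of cyclotomic polynomials \(\Phi_d\) with the \(\lcm\) of the occurring \(d\) equal to \(m\). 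For \(m=20\) one needs some \(d\in\{4,20\}\) and some \(d\in\{5,10,20\}\), giving degree at least \(\phi(4)+\phi(5)=6\); for \(m=24\) one needs some \(d\in\{8,24\}\) and some \(d\in\{3,6,12,24\}\), giving degree at least \(\phi(8)+\phi(3)=6\). Since \(6>5\geq\rank(\bL_g)\), the contradiction survives. With this correction your proof is valid and genuinely different from the paper's: it uses only the negative definiteness of \(\bL_g\) (hence \autoref{thm:bir.transformations} and the signature of \(\bL\)) and no input from the lower-order propositions.

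The paper instead passes to \(g^2\), of order \(10\) resp.\ \(12\): by \autoref{prop:order.10} and \autoref{prop:order.12} the lattice \(\bL_{g^2}\) is isomorphic to \(\bA_4\oplus[-2]\) resp.\ \(\bD_5\), both of rank \(5\); since \(\bL_{g^2}\subseteq\bL_g\) and \(\rank(\bL_g)\leq 5\), this forces \(\bL_g\cong\bL_{g^2}\), and neither lattice admits an isometry of order \(20\) resp.\ \(24\). Your alternative sketch for order \(24\) via \(g^3\) and \autoref{prop:order.8} is the same idea and works; as you note, starting from \(\bL_{g^4}\cong\bA_4\) for order \(20\) does not immediately pin down \(\bL_g\), because \(\bA_4\) has rank \(4\) --- which is precisely why the paper uses \(g^2\) there.
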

\proof 
Suppose \(|g| = 20\). Considering \(g^2\), we see from \autoref{prop:order.10} that it should be \(\bL_g \cong \bA_4 \oplus [-2]\), but this lattice does not admit any isometry of order~\(20\). An analogous argument holds when \(|g| = 24\).
\endproof 

\subsection{Proof of \autoref{thm:main.sp.OG6}} \label{proof:thm:main.sp.OG6}
Suppose that an isometry \(g \in \OO(\bL)\) of finite order~\(m\) is induced by a symplectic birational transformation on a manifold of type \(\OG6\).
By \autoref{thm:bir.transformations} the coinvariant sublattice \(\bL_g\) is negative definite, hence by \autoref{lem:p=2,3,5} only \(2,3\) and \(5\) appear as prime factors of~\(m\). 
As the orders \(16, 9\) and \(25\) are not possible by \autoref{prop:order.16}, \ref{prop:order.9} and \ref{prop:order.15.25}, respectively, it holds \(m \mid 2^3 \cdot 3 \cdot 5 = 120\), which means
\[
    m \in \set{1,2,3,4,5,6,8,10,12,15,20,24,30,40,60,120}.
\]
Now, \autoref{prop:order.15.25} excludes \(15,30,60,120\), and \autoref{prop:order.20.24} excludes \(20,24,40\). Therefore, condition \eqref{eq:possible.orders} holds.

From \autoref{prop:order.2}, \ref{prop:order.3}, \ref{prop:order.5}, \ref{prop:order.4}, \ref{prop:order.8}, \ref{prop:order.6}, \ref{prop:order.10} and \ref{prop:order.12}, we know the isomorphism class of~\(\bL_g\). 
Moreover, equation \eqref{eq:cond.det} always holds, which means that the embedding subgroup of \(\bL_g \hookrightarrow \bL\) is trivial.
This enables us to reconstruct the genus of~\(\bL^g\), hence \(\bL^g\) itself, being always unique in its genus.
As a result of our computations, the pair \((\bL^g,\bL_g)\) appears in \autoref{tab:sp.OG6}.

Conversely, one can check that the column `example' provides isometries with the given invariant and coinvariant lattices. By \autoref{cor:L.div.v.easy.case}, condition~\eqref{eq:thm:bir.transformations} of \autoref{thm:bir.transformations} holds automatically. Therefore, all isometries contained in \autoref{tab:sp.OG6} are indeed induced by a symplectic birational transformation on some marked pair of type \(\OG6\).
This concludes the proof of the theorem.
\qed

\bibliographystyle{amsplain}
\bibliography{references}

\end{document}